\documentclass[11pt]{article}
\usepackage{graphicx} 

\usepackage{authblk}
\usepackage[margin=2.3cm]{geometry}
\usepackage{t1enc}
\usepackage[utf8]{inputenc}
\usepackage{amsmath,amsthm,amssymb}
\usepackage{graphicx}
\usepackage{enumerate}
\usepackage{hyperref}
\usepackage{bm}
\usepackage{comment}
\usepackage{amsfonts}
\usepackage{graphicx,caption}
\usepackage{bm}
\usepackage{amsmath, amsthm, amssymb}
\usepackage{graphicx}
\usepackage{hyperref}
\usepackage{relsize}
\usepackage{blkarray}
\usepackage[table]{xcolor}
\usepackage{tikz}
\usetikzlibrary{decorations.pathreplacing}
\usepackage{tabstackengine}

\stackMath

\usepackage{subfigure}

\usepackage{algpseudocode}

\usepackage{dsfont}
\newcommand{\mathbbm}[1]{\mathds{#1}}

\theoremstyle{plain}
\usepackage{amsthm}
\makeatletter
\newcommand{\newreptheorem}[2]{\newtheorem*{rep@#1}{\rep@title}\newenvironment{rep#1}[1]{\def\rep@title{#2 \ref*{##1}}\begin{rep@#1}}{\end{rep@#1}}}
\makeatother

\newtheorem{theorem}{Theorem}
\newtheorem*{theorem-non}{Theorem}
\newtheorem*{non-lemma}{Lemma}
\newtheorem{lemma}[theorem]{Lemma}
\newreptheorem{lemma}{Lemma}

\newtheorem{corollary}[theorem]{Corollary}

\theoremstyle{definition}
\newtheorem{remark}[theorem]{Remark}
\newtheorem{definition}[theorem]{Definition}

\DeclareMathOperator{\rang}{rank}

\DeclareMathOperator{\Tr}{Tr}

\DeclareMathOperator{\tors}{\mathrm{tor}}

\DeclareMathOperator{\dkl}{D_{KL}}

\title{Product approximations for uniform spanning trees and determinantal processes}
\author[1]{Andr\'as M\'esz\'aros}
\date{}
\affil[1]{HUN-REN Alfr\'ed R\'enyi Institute of Mathematics, Budapest}

\author[2]{Yuval Peled}
\affil[2]{Einstein Institute of Mathematics, Hebrew University of Jerusalem}

\begin{document}

\maketitle
    \begin{abstract}
We prove that for every finite connected simple graph $G$ on $n$ vertices with minimum degree $k\geq 2$, there is a coupling of its uniform spanning tree and its random $1$-out subgraph under which the expected number of differing edges is $O(n\log (k)/\sqrt{k})$. 

More generally, we establish analogous couplings between a wide class of determinantal processes and their corresponding $1$-out processes, including Kalai's determinantal hypertrees, with expected symmetric difference negligible compared to the size of the determinantal set.
While the graphical case admits a surprisingly short and elegant proof, the more general result requires establishing an asymptotic equipartition property for the determinantal measures in this class, from which we also derive exponential growth rates for the homology torsion of determinantal hyperforests in higher-dimensional regular high-degree complexes.
\end{abstract}
\section{Introduction}

\subsection{Uniform spanning trees and $1$-out graphs}
Spanning trees are fundamental objects in discrete mathematics and computer science,  
and the uniform spanning tree (UST) of a connected graph $G=(V,E)$ is one of the most studied objects in discrete probability (see ~\cite{lyons2017probability}).
Our main result shows that if the minimum degree of $G$ tends to infinity, 
then its UST can be coupled with a random subgraph generated by a product measure, called the $1$-out graph (see~\cite{frieze2017random}), so that their 
edge sets differ by $o(|V|)$ edges in expectation. 

To sample the $1$-out graph, we let each $v\in V$ independently draw a uniform random neighbor $R(v)$ of $v$, and let $G^{1o}$ be the spanning subgraph of $G$ 
whose edge set is
\(
    E(G^{1o})=\big\{\{v,R(v)\}\,:\,v\in V\big\}\,.
\)
Thus, $G^{1o}$ is determined by independent choices at the vertices of $G$.

There are several indications in the literature of a connection between the UST and the random $1$-out graph. First, Joyal's proof of Cayley's formula~\cite{joyal1981theorie} gives an $n^2$-to-one map from functions $f:[n]\to[n]$ onto the spanning trees of $K_n$.
Thus a uniform random function yields a UST of $K_n$. The construction preserves all edges $\{v,f(v)\}$ for which $v$ does not lie on a directed cycle of $f$. 
Since a uniform random function $f$ has $O(\sqrt{n})$ cyclic vertices in expectation, the resulting tree differs from the undirected graph of $f$ by $O_{\mathbb{P}}(\sqrt{n})$ edges (see also~\cite[Lemma~6]{heilman2022tree}). On the other hand, resampling the fixed points of $f$ to obtain uniform choices from $[n]\setminus\{v\}$ changes only $O(1)$ additional edges in expectation. Hence, Joyal's construction gives a coupling of the UST of $K_n$ and $K_n^{1o}$ with $O(\sqrt{n})$ expected edge differences.

For an arbitrary graph $G$, there is no evident analogue of Joyal's correspondence. Nevertheless, we show that for graphs with large minimum degree, the UST and the random $1$-out graph can be coupled so that their edge sets are close in expectation.

\begin{theorem}\label{thm1dcoupling}
    Let $G$ be a finite simple connected graph on $n$ vertices with minimum degree $k\geq 2$, and let $T$ be a uniform spanning tree of $G$.
    There is a coupling of $T$ and $G^{1o}$ in which
    \[
        \mathbb{E}\left[\frac{|E(G^{1o})\Delta E(T)|}{n}\right]
        \leq O\left(\frac{\log k}{\sqrt{k}}\right)\,.
    \]
\end{theorem}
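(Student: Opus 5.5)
The approach is to realise both objects as random functions $V\to V$, compare them in relative entropy, and then use a transport inequality for product measures.

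\textbf{Step 1: the tree as a random function.} Let $d_v$ be the degrees and $m=|E|$. Choose a root $r$ with probability $d_r/2m$ and an independent uniform spanning tree $T$. Orient $T$ towards $r$, set $f(v)$ to be the parent of $v$ for $v\neq r$, and let $f(r)$ be a uniform neighbour of $r$. Each admissible $f$ (a spanning tree plus one extra out-edge at the root) then has probability
\[
\mu(f)=\frac{d_r}{2m\,\tau(G)}\cdot\frac1{d_r}=\frac1{2m\,\tau(G)},
\]
where $\tau(G)$ is the number of spanning trees. The $1$-out graph is the undirected image of $R\sim\nu$, where $\nu(f)=\prod_v d_v^{-1}$. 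Hence
\[
D_{KL}(\mu\|\nu)=\log\frac{\prod_v d_v}{2m\,\tau(G)}.
\]

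\textbf{Step 2: Marton's inequality.} Since $\nu$ is a product measure, Marton's inequality gives a coupling of $f\sim\mu$ and $R\sim\nu$ with
\[
\mathbb{E}\,\#\{v: f(v)\neq R(v)\}\le \sqrt{\tfrac n2\,D_{KL}(\mu\|\nu)}.
\]

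\textbf{Step 3: from functions to edge sets.} Each coordinate where $f$ and $R$ disagree changes at most two undirected edges. Moreover, $E(T)$ equals the undirected edge set of $f$ up to the single extra edge $\{r,f(r)\}$. So
\[
\mathbb{E}|E(G^{1o})\,\Delta\, E(T)|\le 1+2\sqrt{\tfrac n2 D_{KL}}.
\]
It therefore suffices to show
\[
\log\tau(G)\ \ge\ \sum_v\log d_v-\log(2m)-O\!\left(\frac{n\log k}{k}\right). \tag{$*$}
\]
Given $(*)$, the bound above is $O\bigl(n\sqrt{\log k/k}\bigr)$, which is at most $O(n\log k/\sqrt k)$, as required.

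\textbf{Step 4: proving $(*)$, the main obstacle.} By the matrix-tree theorem for the random-walk matrix $P$ with eigenvalues $1=\lambda_1>\lambda_2\ge\dots\ge\lambda_n\geq -1$,
\[
\tau(G)=\frac{\prod_v d_v}{2m}\prod_{i\ge2}(1-\lambda_i).
\]
So I need $-\sum_{i\ge2}\log(1-\lambda_i)\le O(n\log k/k)$.

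I would first replace $P$ by the lazy walk, or otherwise treat eigenvalues near $-1$ separately; these only help, since $\log(1-\lambda)\ge0$ when $\lambda\le0$. Then expand $-\log(1-\lambda)=\sum_{t\ge1}\lambda^t/t$. The key pointwise input is
\[
p_t(v,v)=\sum_u p_{t-1}(v,u)p(u,v)\le \max_u p(u,v)\le 1/k \quad\text{for all } t\ge1,
\]
so $\sum_{i\ge2}\lambda_i^t\le n/k$ for every $t\geq 1$. Summing over $t\le T\approx k$ contributes $O\bigl((n/k)\log k\bigr)$.

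The tail $t>T$, that is, eigenvalues very close to $1$, is the delicate part. The trace bound only gives $\#\{i\ge2:\lambda_i\ge1-\varepsilon\}\le n/\bigl(k(1-\varepsilon)^t\bigr)$. A naive spectral-gap bound would introduce a $\log n$ term, so this must be avoided. I would try to control $\sum_{i\geq 2}\log\bigl(1/(1-\lambda_i)\bigr)$ for the small eigenvalues of $I-P$ by a direct combinatorial lower bound, for instance via Wilson's algorithm or a comparison with contracting dense clusters. An alternative is to accept the weaker estimate: here I expect the slack between $\sqrt{\log k/k}$ and the target $\log k/\sqrt k$ to absorb a cruder bound such as $D_{KL}=O(n\log^2k/k)$, which is exactly what the theorem permits.
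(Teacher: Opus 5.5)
Your Steps 1--3 are the same reduction as in the paper: code a spanning tree by its parent map, compare with the product law of $R$ in relative entropy, and apply Marton's lemma. But the proposal stops at the only non-routine input, because the lower bound $(*)$ on $\tau(G)$ is never proved. The paper does not prove it either; it cites Kostochka's theorem, $\tau(G)\ge\exp\bigl(-O(n\log^2k/k)\bigr)\prod_v d_v$ for connected $G$ of minimum degree $k$. That gives $D_{KL}=O(n\log^2k/k)$ and hence the stated rate. In your Step 4 the range $t\le T$ is essentially fine. In the tail, however, the bound $p_t(v,v)\le 1/k$ does not decay in $t$, and the spectral gap can be of order $n^{-2}$ (take a path of about $n/k$ copies of $K_{k+1}$ joined by single edges). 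So the trace bound combined with the gap loses about $(n/k)\log n$. Your fallback of ``accepting'' $D_{KL}=O(n\log^2k/k)$ does not get around this: that estimate \emph{is} Kostochka's theorem, and an extra factor of $\log k$ cannot absorb a $\log n$ loss. To complete the spectral route you need a decay bound for $\Tr Q^t-1$, with $Q=(I+P)/2$, that is uniform over connected graphs. One option is the Lyons--Oveis Gharan bound $1-\lambda_j\ge c\,j^3/n^3$, which gives $\Tr Q^t-1\le Cn\,t^{-1/3}$. Combining this with $\Tr Q^t\le 2^{-t}n+n/k$ for small $t$, the identity $\prod_{i\ge2}(1-\lambda_i)=2^{n-1}\prod_{i\ge2}(1-\mu_i)$ for the eigenvalues $\mu_i$ of $Q$, and a split at $T=k^3$ should yield $(*)$. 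As written, though, the central estimate is missing.

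There are two further corrections. First, in Step 1 the formula $\mu(f)=1/(2m\tau(G))$ is false (for $G=K_3$ it would give total mass $8/18$). The functional graph of an admissible $f$ has a unique directed cycle. Every vertex $r$ on that cycle, together with the tree $\{\{v,f(v)\}:v\neq r\}$ rooted at $r$, produces the same $f$. Hence $\mu(f)=L(f)/(2m\tau(G))$, where $L(f)\ge2$ is the cycle length. Thus $D_{KL}(\mu\|\nu)=\log\frac{\prod_v d_v}{2m\tau(G)}+\mathbb{E}_\mu\log L(f)$. The extra term is at most $\log n=O(n\log k/k)$, so this is harmless, and fixing a root $v_0$ and coding trees by maps on $V\setminus\{v_0\}$, as the paper does, avoids it entirely. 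Second, in Step 4 you must not discard all nonpositive eigenvalues. If you do, the first-order term becomes $\sum_{i\ge2,\,\lambda_i>0}\lambda_i$, which is of order $n/\sqrt k$ for Ramanujan-type $k$-regular graphs. There $\sum_{i\ge2}\lambda_i^2=n/k-1$, $|\lambda_i|\le 2/\sqrt k$ for $i\ge2$, and $\sum_{i\ge2}\lambda_i=-1$. Marton would then give only $O(k^{-1/4})$. The cancellation coming from $\Tr P=0$ is essential. Discarding only eigenvalues near $-1$, or passing to the lazy walk while tracking the factor $2^{n-1}$, is fine.
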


Equivalently, the $1$-Wasserstein distance between the laws of $T$ and $G^{1o}$, with respect to the normalized Hamming distance $d(H_1,H_2)=n^{-1}|E(H_1)\Delta E(H_2)|$ on spanning subgraphs of $G$, is $O(\log k/\sqrt{k})$.

We note that the proof of Theorem \ref{thm1dcoupling} is, perhaps surprisingly, technically simple. The coupling follows by combining a lower bound of Kostochka~\cite{kostochka1995number} on the number of spanning trees in a graph in terms of its degree sequence with Marton's coupling lemma~\cite{marton1986simple}. 

Using a second moment argument, we also bound the proportion of vertices whose neighborhoods differ under the same coupling.
For a graph $H$, a vertex $o\in V(H)$ and an integer $r\geq 0$, we write $B_H(o,r)$ for the subgraph of $H$ induced by the vertices of graph distance at most $r$ from $o$, rooted at $o$. 
In addition, when $H$ and $H'$ have the same vertex set, equality of these neighborhoods is understood as equality of their vertex and edge sets, with the same root.

\begin{theorem}\label{thm:local-coupling}
    Under the assumptions of Theorem~\ref{thm1dcoupling}, the coupling may be chosen so that, for every fixed~$r$,
    \[
        \mathbb{E}\left[\frac{|\{o\in V(G)\,:\,B_T(o,r)\neq B_{G^{1o}}(o,r)\}|}{n} \right] 
        \leq 10\varrho\cdot\left(\frac{C\log k}{\sqrt{k}}\right)^{2^{-r}}\,,
    \]
    where
    \[
        \varrho=\frac{1}{n}\sum_{o\in V(G)}\left(\frac{\deg_G(o)}{k}\right)^2
    \]
    measures the irregularity of $G$, and $C$ is a universal constant. 
\end{theorem}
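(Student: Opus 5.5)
We plan to keep the coupling of Theorem~\ref{thm1dcoupling}. Recall that it comes from Marton's lemma, so we may take it in a form where the discrepancy is spread over vertices. To set this up, encode $T$ as a random function: root $T$ at a uniform vertex and let each non-root vertex $v$ point to its parent $P(v)$. Alternatively, use the distribution of the vector $(P(v))_{v}$ directly. Marton's coupling between this vector and the independent vector $(R(v))_v$ gives
\[
\sum_v \mathbb{P}(P(v)\neq R(v))\le \sqrt{2n\,\dkl(P\,\|\,R)}.
\]
Kostochka's bound gives $\dkl = O(n\log k/k)\cdot O(\log k)$, which yields $\sum_v\mathbb{P}(P(v)\ne R(v)) = O(n\log k/\sqrt{k})$. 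Let $D=\{v: P(v)\neq R(v)\}$ be the set of \emph{defective} vertices, with $\mathbb{E}|D|\le \varepsilon n$ and $\varepsilon=C\log k/\sqrt k$. On $V\setminus D$, the edge sets of $T$ and $G^{1o}$ at $v$ coincide in the outgoing edge $\{v,P(v)\}=\{v,R(v)\}$. Every edge of $T$ and of $G^{1o}$ is an outgoing edge of some vertex, apart from at most one root edge, and we absorb the root into $D$.

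The next step is a deterministic reduction. If $B_T(o,r)\neq B_{G^{1o}}(o,r)$, then some edge incident to a vertex within distance $r$ of $o$ differs, in $T$ or in $G^{1o}$. That edge is the outgoing edge of a defective vertex $w$ lying within distance $r+1$ of $o$ in $T\cup G^{1o}$. Counting such $o$ naively via ball sizes fails, because high-degree vertices can have huge in-neighbourhoods. We therefore plan an induction on $r$ that explains the exponent $2^{-r}$. Let $D_r$ be the set of $o$ whose $r$-balls differ, so that $D_0\subseteq D$ up to root effects. We have $o\in D_{r}$ only if $o\in D$ or some neighbour $u$ of $o$ in $T$ or $G^{1o}$ lies in $D_{r-1}$. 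Since the out-edges agree off $D$, the neighbours of $o\notin D$ are $R(o)$ together with the in-neighbours $\{u:R(u)=o\}$ or $\{u:P(u)=o\}$.

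The key estimate is the bound on $|D_r|$, which we obtain from a second moment (Cauchy--Schwarz) argument. Let $N(o)$ denote the in-degree of $o$ in $G^{1o}$ plus that in $T$. We bound $|D_r\setminus D|$ by $|\{o: \text{some in-neighbour of } o \text{ is in } D_{r-1}\}| + |\{o: R(o)\in D_{r-1}\}|$. The second term is at most $\sum_{u\in D_{r-1}}N(u)$, and the first is also at most $\sum_{u\in D_{r-1}} 1$. Hence
\[
\mathbb{E}|D_r|\le \mathbb{E}|D|+\mathbb{E}\sum_{u\in D_{r-1}}(1+N(u))\le \mathbb{E}|D|+\sqrt{\mathbb{E}|D_{r-1}|}\,\sqrt{\mathbb{E}\textstyle\sum_u (1+N(u))^2}.
\]
For the $1$-out part, $N(u)$ is a sum of independent Bernoullis with means $1/\deg(w)\le 1/k$ over neighbours $w$, so $\mathbb{E}N(u)^2\le (\deg u/k)^2+\deg u/k$. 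For the tree part, we use the corresponding bound through the coupling, or Kirchhoff/negative correlation: the UST degree has second moment controlled via transfer currents, and $\mathbb{E}\deg_T(u)^2 \le (\mathbb{E}\deg_T u)^2 + \mathbb{E}\deg_T u$ by negative association. Also $\mathbb{E}\deg_T u\le \deg u\cdot \max R_{\rm eff}$, and effective resistance across an edge is at most about $2/k$ in min-degree-$k$ graphs only under extra conditions. This is where I expect the main obstacle to lie. If the direct bound fails, the fallback is to measure everything through the $1$-out side, replacing in-neighbours in $T$ by in-neighbours in $G^{1o}$ plus defective vertices, so that only the explicit independent moments $\sum_u\mathbb{E}(1+N(u))^2\le 4\varrho n$ are needed. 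With $a_r=\mathbb{E}|D_r|/n$ this gives
\[
a_r\le \varepsilon+2\sqrt{\varrho\,a_{r-1}},
\]
and induction with $a_0\le\varepsilon\le 1$ and $\varrho\ge1$ gives $a_r\le 10\varrho\,\varepsilon^{2^{-r}}$. The constants here are loose, and adjusting them is a routine check.
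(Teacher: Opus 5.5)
Your overall plan matches the paper's: take the bad set from the Marton coupling of Theorem~\ref{thm1dcoupling}, grow it $r$ times, and use Cauchy--Schwarz against explicit second moments of $1$-out degrees to get the exponent $2^{-r}$. However, the deterministic step linking the bad set to differing balls does not work as written.

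Take $D_r$ to be the set of roots whose $r$-balls differ. Then $D_0=\emptyset$, and your recursion already fails at $r=1$. Consider a vertex $o\notin D$ together with some $w\in D$ such that $R(w)=o\neq P(w)$ and $R(o)\neq w$. Then $w\in B_{G^{1o}}(o,1)$ but $w\notin B_T(o,1)$, so $o\in D_1\setminus D$, although no neighbour of $o$ lies in $D_0$. Your other reduction (a defective vertex within distance $r+1$) is valid, but it costs one extra square root. It only yields $\varepsilon^{2^{-(r+1)}}$, which is weaker than the claimed bound.

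The missing observation is the one the paper uses. Let $U$ be the set of vertices incident to an edge of $E(T)\Delta E(G^{1o})$, so $\mathbb{E}|U|\le 2\,\mathbb{E}|E(T)\Delta E(G^{1o})|=O(\varepsilon n)$. A vertex outside $U$ has the same incident edges in both graphs. Hence $o\notin B_{G^{1o},r}(U)$ forces $B_T(o,r)=B_{G^{1o}}(o,r)$, at radius exactly $r$. Equivalently, in your notation: the differing set is contained in the $r$-neighbourhood of $D\cup\{v_0\}$ in $T\cup G^{1o}$. This holds because a vertex such that it and all its neighbours in $T\cup G^{1o}$ lie outside $D\cup\{v_0\}$ has the same neighbourhood in both graphs.

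The same observation removes what you call the main obstacle. Vertices outside the seed set have identical neighbourhoods in the two graphs, so all growth can be measured in $G^{1o}$:
$|B_{G^{1o},i}(U)|\le\sum_{u\in B_{G^{1o},i-1}(U)}(1+\deg_{G^{1o}}(u))$.
The only input needed is $\mathbb{E}(1+\deg_{G^{1o}}(v))^2\le 10(\deg_G(v)/k)^2$. This depends only on the product law of $G^{1o}$, not on the coupling. Take a uniform root $o$ independent of the coupled pair and set $a_i=\mathbb{P}(o\in B_{G^{1o},i}(U))$. Cauchy--Schwarz gives $a_i\le\sqrt{10\varrho\,a_{i-1}}$, hence $a_r\le 10\varrho\,a_0^{2^{-r}}$.

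No UST degree moments, negative association, or effective-resistance bounds are needed. In fact your own count never used them. The quantity $|\{o:R(o)\in D_{r-1}\}|$ is a sum of $G^{1o}$ in-degrees, and $T$-in-neighbours are paid for by $|D_{r-1}|$ because each vertex has a single parent. So your ``fallback'' is the right argument, but it needs to be stated with the correct seed set and radius rather than left as a contingency.
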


Another well-studied connection between the UST and the random $1$-out graph is their common local weak limit.  
Here, local weak convergence means convergence in distribution of the rooted neighborhoods of every fixed radius, viewed up to root preserving isomorphism (see Section~\ref{SecRootedGraphs}).
Grimmett~\cite{grimmett1980random} showed that the local weak limit of the UST of $K_n$ is the Poisson$(1)$ Galton--Watson tree conditioned to survive forever.  
More explicitly,  this limit is obtained from a one-sided infinite path, rooted at its endpoint, by identifying each vertex of the path with the root of an independent unconditioned Poisson$(1)$ Galton--Watson tree. 
It is easy to see that this is also the local weak limit of $K_n^{1o}$, see~\cite{linial2019enumeration}. In the $1$-out model, the path corresponds to the successive choices of the root, its chosen neighbor and so on, while the trees ``hanging'' on its vertices correspond to vertices whose successive choices lead to the path.
Nachmias and Peres~\cite{nachmias2022local} extended Grimmett's result to sequences of high degree almost regular graphs. In the regular case, $\varrho=1$, and a direct exploration of the random $1$-out subgraph gives the same local weak limit. Hence, Theorem~\ref{thm:local-coupling} directly recovers their result in this case.

\begin{corollary}
    Let $G_n$ be a $d_n$-regular connected graph such that $d_n\to\infty$, and let $T_n$ be a uniform random spanning tree of $G_n$. Then the local weak limit of $T_n$ is the Poisson(1) branching process conditioned to survive forever.
\end{corollary}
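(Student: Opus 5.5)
The corollary will follow from Theorem~\ref{thm:local-coupling} once we identify the local weak limit of $G_n^{1o}$. Since $G_n$ is $d_n$-regular, $\varrho=1$, and taking $k=d_n$ the bound of Theorem~\ref{thm:local-coupling} is
\[
10\left(\frac{C\log d_n}{\sqrt{d_n}}\right)^{2^{-r}}\longrightarrow 0
\]
for each fixed $r$. Let $o$ be a uniform vertex, independent of everything else. Under the coupling, $\mathbb{P}\big(B_{T_n}(o,r)\neq B_{G_n^{1o}}(o,r)\big)\to 0$. So the laws of the rooted neighborhoods $B_{T_n}(o,r)$ and $B_{G_n^{1o}}(o,r)$, up to root-preserving isomorphism, are at total variation distance $o(1)$. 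It therefore suffices to show that $B_{G_n^{1o}}(o,r)$ converges in distribution to the $r$-ball of the Poisson$(1)$ Galton--Watson tree conditioned to survive, described as a one-sided path with independent Poisson$(1)$ trees attached.

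For this I would explore $G_n^{1o}$ from $o$, revealing choices only as needed. First reveal the forward path $o, R(o), R(R(o)),\dots$ up to $r$ steps. Then, for each vertex $u$ found so far, find its in-neighbors, the vertices $w\sim u$ with $R(w)=u$, and continue breadth-first up to distance $r$. Every edge of $G_n^{1o}$ at $u$ is either $\{u,R(u)\}$ or $\{w,u\}$ with $R(w)=u$. Hence $B_{G_n^{1o}}(o,r)$ is determined by $O_{\mathbb P}(1)$ revealed choices together with, for each explored vertex, its in-neighbor set among its $d_n$ neighbors.

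Conditioned on the history, each unexplored neighbor $w$ of $u$ picks $u$ independently with probability exactly $1/d_n$. So the number of in-neighbors of $u$ is Binomial$(d_n-O(1),1/d_n)$, which converges to Poisson$(1)$, and the counts are asymptotically independent across explored vertices.

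The main obstacle is showing that the exploration is a tree with high probability: no collisions and no short cycles within radius $r$. There are three things to rule out. First, the forward path might revisit a vertex. Second, two explored vertices might share an in-neighbor, which is impossible since each $w$ has only one $R(w)$. Third, an in-neighbor $w$ might be an already-explored vertex or coincide with a forward-path vertex. In each of these events some specific vertex's uniform choice among $d_n$ neighbors must hit one of $O_{\mathbb P}(1)$ already-explored vertices, which has probability $O(1/d_n)$. A union bound over the tight number of explored vertices then gives probability $o(1)$. One subtlety is that an in-neighbor $w$ of $u$ might itself be the next vertex on the forward path. This is exactly the event that $R(R(u))=u$, which also has probability $O(1/d_n)$.

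Thus, with high probability, the $r$-ball of $G_n^{1o}$ is a path of length $r$ from $o$ with asymptotically independent Poisson$(1)$ Galton--Watson trees hanging off each path vertex, truncated at distance $r$ from $o$. This is the $r$-ball of the Poisson$(1)$ Galton--Watson tree conditioned to survive, which finishes the proof.
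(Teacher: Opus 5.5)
Your proposal follows the paper's route: Theorem~\ref{thm:local-coupling} with $\varrho=1$ and $k=d_n$ reduces the statement to the local limit of $G_n^{1o}$, which the paper only asserts follows from a direct exploration, and you carry out that exploration essentially correctly. Two small points should be tightened, neither affecting the conclusion: a neighbor $w$ of $u$ that was already checked against other explored vertices picks $u$ with conditional probability $1/(d_n-O(1))$ rather than exactly $1/d_n$ (the Poisson$(1)$ limit is unchanged), and since $B_{G_n^{1o}}(o,r)$ is an induced subgraph you should also reveal the choice of the last vertex of the forward path and rule out that it lands in the explored set, otherwise an edge between two vertices at distance $r$ could close a cycle your exploration misses; this costs only another $o(1)$ by the same union bound.
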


\subsection{Determinantal and $1$-out processes}\label{sec:det_intro}

The coupling of Theorem~\ref{thm1dcoupling} extends from uniform spanning trees to a more general class of determinantal processes in the setting introduced by Nachmias and the second author~\cite{nachmias2025local}.

Let $\mathfrak{G}_n=(V_n,U_n,E_n)$ be a sequence of simple bipartite graphs. We will often drop the index $n$. We assume that $\mathfrak{G}$ is $C_4$-free, $|V|=n$ and $|U|=m$, and that the degree of each $v\in V$ is $d$ and the degree of each $u\in U$ is $k+1$. We assume that $k\geq 1$ is fixed and $d\to\infty$ as $n\to\infty$.

Let $B=B_n$ be an $n\times m$ signed adjacency matrix of $\mathfrak{G}$, that is, $B_{v,u}\in\{-1,+1\}$ for all $vu\in E$, and $B_{v,u}=0$ otherwise. Let $H=H_n\subset\mathbb{R}^m$ be the row space of $B$, and let $P_H$ be the matrix of the orthogonal projection onto $H$ in the standard basis of $\mathbb{R}^m$. Write $r=\dim H$. For a subset $\mathcal{T}\subset U$, let $P_H\restriction\mathcal{T}$ denote the submatrix of $P_H$ whose rows and columns are indexed by $\mathcal{T}$. The determinantal process associated with $H$ is a probability measure $\mathbb{P}^H$ on subsets $\mathcal{T}\subset U$ of size $r$, given by
\[
\mathbb{P}^H(\mathcal{T})=\det(P_H\restriction\mathcal{T})\,.
\]
As discussed below, this setting includes several well-studied random processes, such as uniform spanning trees of regular graphs, Kalai's determinantal hypertrees~\cite{kalai1983enumeration}, determinantal hyperforests in high degree regular polytopal complexes, and more. See~\cite{nachmias2025local} and the references therein for further examples.

Additionally, there is a natural $1$-out process in this setting. Independently for each $v\in V$, draw a uniform random neighbor $R(v)$ of $v$, and set
\(
    \mathcal{T}^{1o}=\{R(v)\,:\,v\in V\}\subset U\,.
\)
For a subset $\mathcal{S}\subset U_n$, write $\mathfrak{G}_n[\mathcal{S}]$ for the subgraph of $\mathfrak{G}_n$ induced by $V_n\cup\mathcal{S}$. Let $\mathcal{T}_n$ be drawn from $\mathbb{P}^{H_n}$, and let $o_n$ be a uniform random vertex of $V_n$, chosen independently of both processes. The main result in~\cite{nachmias2025local} identifies the local weak limit of $(\mathfrak{G}_n[\mathcal{T}_n],o_n)$, which is also the local weak limit of the corresponding $1$-out graphs $(\mathfrak{G}_n[\mathcal{T}_n^{1o}],o_n)$.
Here we show that the two processes can also be coupled so that their symmetric difference has $o(n)$ elements in expectation.

\begin{theorem}\label{thmgeneralcoupling}
    In the setup above, for each $n$ there is a coupling of $\mathcal{T}_n$ and $\mathcal{T}_n^{1o}$ such that
    \[
        \lim_{n\to\infty}\mathbb{E}\left[\frac{|\mathcal{T}_n\Delta\mathcal{T}_n^{1o}|}{n}\right]=0\,.
    \]
\end{theorem}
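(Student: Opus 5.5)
Following the strategy behind Theorem~\ref{thm1dcoupling}, we combine Marton's coupling lemma with an entropy comparison. The $1$-out process is a product measure. Each coordinate $R(v)$ is uniform on the $d$ neighbors of $v$, so its law $Q$ on functions $R:V\to U$ has entropy $n\log d$. We lift $\mathbb{P}^H$ to a measure on such functions. Given $\mathcal{T}$, choose uniformly among the functions $R$ with $R(v)\in N(v)$ and image equal to $\mathcal{T}$. When the image has size $r$ this is well defined, and $\mathcal{T}^{1o}$ is recovered as the image of $R$. Call this lifted measure $\mu$. Marton's inequality for product measures with Hamming distance gives a coupling of $\mu$ and $Q$ with
\[
\mathbb{E}\,|\{v: R_\mu(v)\neq R_Q(v)\}|\le \sqrt{\tfrac{n}{2}\dkl(\mu\|Q)}.
\]
Each disagreeing coordinate changes the image by at most two elements. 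Also $|\,|\mathcal{T}^{1o}|-r\,|$ should be $o(n)$ in expectation; this is easy from the local limit, or from the fact that $m$ and $n$ satisfy $r/n\to$ the limiting density. It therefore suffices to prove $\dkl(\mu\|Q)=o(n)$.

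Since $Q$ is uniform, $\dkl(\mu\|Q)=n\log d-H(\mu)$. Write $N(\mathcal{T})$ for the number of functions $R$ with $R(v)\in N(v)$ and image exactly $\mathcal{T}$. Then
\[
H(\mu)=H(\mathbb{P}^H)+\mathbb{E}_{\mathbb{P}^H}\log N(\mathcal{T}).
\]
We need two estimates, each correct up to $o(n)$. The first is $\mathbb{E}\log N(\mathcal{T})$, a combinatorial count. If $\mathcal{T}$ typically looks locally like the limiting object, then $N(\mathcal{T})$ factorizes approximately over components, much like counting the ``rooted'' orientations of a tree. Its exponential rate can then be computed from the local weak limit of~\cite{nachmias2025local}, as an expectation of a local functional at the root. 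The second estimate is the entropy $H(\mathbb{P}^H)=-\mathbb{E}\log\det(P_H\restriction\mathcal{T})$, and this is the asymptotic equipartition property promised in the abstract. Via Cauchy--Binet we have $\det(P_H\restriction\mathcal{T})=\det(B_{\mathcal{T}}^{\top}\cdots)/\det(BB^\top)$ restricted to a basis of $H$. Concretely, $\mathbb{P}^H(\mathcal{T})=|\tors|^2$-type quantities divided by a normalizing determinant. So the plan is to show that $\frac1n\log\det$ of the normalizer converges, using the spectral measure of $BB^\top/d$, which converges to a Kesten--McKay/free-type limit as $d\to\infty$ because of $C_4$-freeness. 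We also need $\frac1n\log\det(P_H\restriction\mathcal{T})$ to concentrate and have the matching expectation.

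We verify the final cancellation by the chain rule. Order $V$ and reveal $\mathcal{T}$ sequentially, for example through the $1$-out exploration. Then $-\log\mathbb{P}^H(\mathcal{T})$ is a sum of conditional log-probabilities. Each is a ratio of projection entries $\langle P_{H'}e_u,e_u\rangle$, where $H'$ is obtained by contracting the revealed elements. In the high-degree regime each such conditional probability should be $\approx 1/d$ times a local correction that the local limit determines. Summing, we get $H(\mathbb{P}^H)=n\log d-\mathbb{E}\log N+o(n)$, which is exactly what is needed. The main obstacle is this step: controlling the conditional marginals $P_{H'}(u,u)$ uniformly along the revelation, showing they equal $(1+o(1))\,c/d$ for all but $o(n)$ steps. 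I would first try to establish it using the negative correlation (strong Rayleigh) property, which gives one-sided bounds for free. For the other direction I would use a Kostochka-type lower bound on $\det$, replacing the graph count by a lower bound on the number of bases of $B$ weighted by torsion. The difficulty is the torsion factor, which has no analogue in the graph case. Finally, a lower bound $H(\mathbb{P}^H)\ge n\log d-\mathbb{E}\log N-o(n)$ suffices, because the reverse inequality holds automatically from $\dkl\ge0$.
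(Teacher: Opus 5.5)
Your reduction is sound as far as it goes, but it leaves a genuine gap. Every $\mathcal{T}$ in the support is a column basis of $B$, so each $v$ has a neighbor in $\mathcal{T}$ and $\mathfrak{G}[\mathcal{T}]$ has a matching saturating $\mathcal{T}$. Hence $N(\mathcal{T})>0$ and $\dkl(\mu\|Q)=n\log d-H(\mathbb{P}^H)-\mathbb{E}\log N(\mathcal{T})$. Marton together with $|\mathcal{T}\Delta\mathcal{T}^{1o}|\le 2|\{v:R_\mu(v)\neq R_Q(v)\}|$ would then finish; no comparison of $|\mathcal{T}^{1o}|$ with $r$ is needed.

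Even granting Corollary~\ref{corEntropy}, everything hinges on $\mathbb{E}\log N(\mathcal{T})\ge 2c_kn-o(n)$, and you do not prove this. Every surjection contains a matching saturating $\mathcal{T}$, and each such matching extends to only $e^{o(n)}$ surjections since $n-r\le 2kn/d$. So $\log N(\mathcal{T})$ equals, up to $o(n)$, the logarithm of the number $m_r(\mathcal{T})$ of maximum matchings of $\mathfrak{G}[\mathcal{T}]$. That is a perfect-matching count, not a local quantity. For the matching measure $\rho$ one has $\log m_r(\mathcal{T})=\frac{n+r}{2}\int_{\lambda\neq0}\log(\lambda^2)\,\rho(d\lambda)$. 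The weak convergence in Lemma~\ref{mathingmeasureconvergence} gives no control over roots accumulating near, but not at, $0$. This is the uniform-integrability problem of Lemma~\ref{lemmauniformint} moved to matching polynomials, where no determinantal inverse-moment bound is available.

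Nothing factorizes over components either: $\mathfrak{G}[\mathcal{T}]$ has $(k+1)r$ edges on $n+r$ vertices, so for $k\ge2$ it is far from a forest. The easy bounds do not close the gap. Comparison with the monomer--dimer partition function gives only $\limsup_n\frac1n\log m_r\le 2c_k$, which you already get from $\dkl\ge0$. Cauchy--Binet, $\det(B^\top B\restriction\mathcal{T})\le m_r(\mathcal{T})^2$, yields only $c_k$, and only under~\eqref{niceB}. Your chain-rule and strong-Rayleigh sketch does not supply the lower bound; as you say yourself, it is left open.

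The missing idea is the paper's $\varepsilon$-softening. Let each $v$ choose $*$ with probability $\varepsilon$. Lift $\mathcal{T}$ not to exact surjections but to a random partial matching $M$ of $\mathfrak{G}[\mathcal{T}]$ with weight $(1-\varepsilon)^{|M|}\varepsilon^{n-|M|}$. For fixed $\varepsilon>0$ the normalized log partition function converges by local convergence alone to $\zeta_\varepsilon=\int\log(\varepsilon+(1-\varepsilon)x)\,\eta_k(dx)$, because this integrand is bounded below (Lemma~\ref{lemmaZlimit}). One lets $\varepsilon\to0$ only after $n\to\infty$, using $\zeta_\varepsilon\to2c_k$. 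The unmatched fraction $Q_n$ is controlled by comparing the at most $(k+2)^r$ matchings with the existence of one of size $r$. Counting the at most $\binom{m}{r-|M|}$ sets compatible with a given $M$ then bounds $\dkl/n$ by $2c_k-\zeta_\varepsilon+O(\varepsilon)$ plus a $Q_n\log(1/Q_n)$ term.

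Separately, your proposed route to the AEP through the normalizer ${\det}^+(BB^\top)$ fails in this generality. The spectral measure of $d^{-1}BB^\top$ tends to $\delta_1$, not a Kesten--McKay law. Weak convergence says nothing about $\frac1r\log{\det}^+(d^{-1}BB^\top)$, which the paper must assume in~\eqref{niceB}. Theorem~\ref{thm1} is instead proved by comparing the spectral measure of $dP_H\restriction\mathcal{T}$ with that of $B[\mathcal{T}]B[\mathcal{T}]^\top$. Uniform integrability near $0$ comes from an inverse-moment bound on the small eigenvalues of $P_H\restriction\mathcal{T}$ under $\mathbb{P}^H$.
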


Unlike in the UST case, the proof of Theorem~\ref{thmgeneralcoupling} uses the local weak limit of the determinantal process established in~\cite{nachmias2025local}, so it does not give an alternative proof of that local limit.

\medskip

One of the key ingredients in the proof of Theorem~\ref{thmgeneralcoupling} is the following asymptotic equipartition property of the determinantal measure $\mathbb{P}^{H_n}$.
The constant $c_k$ appearing in the theorems below was introduced by the first author~\cite{meszaros2026homology} in his study of the homology torsion growth of determinantal hypertrees. 
We recall its definition in Definition~\ref{ckdef}, and note here that it depends only on $k$ and not on the matrices $B_n$. We have $c_1=0$, while $c_k>0$ for $k\geq2$.

\begin{theorem}\label{thm1}
    In the setup above, let $\mathcal{T}_n$ be drawn from the measure $\mathbb{P}^{H_n}$. Then, as $n\to\infty$,
    \[
        \frac{\log\mathbb{P}^{H_n}(\mathcal{T}_n)}{n}+\log(d)
        \text{ converges to }2c_k\text{ in probability.}
    \]
\end{theorem}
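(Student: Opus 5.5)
The plan is to reduce the statement to a law of large numbers for $\log|\det|$ of a random square submatrix of $B$, and then to use the local weak limit from~\cite{nachmias2025local} together with the analysis behind $c_k$ in~\cite{meszaros2026homology}.

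\textbf{Step 1: an explicit formula for $\mathbb{P}^H$.} First assume $B$ has full row rank $r=n$. Then $P_H=B^{T}(BB^{T})^{-1}B$, and Cauchy--Binet gives, for $|\mathcal{T}|=n$,
\[
\mathbb{P}^H(\mathcal{T})=\frac{\det(B_{\mathcal{T}})^2}{\det(BB^{T})},
\]
where $B_{\mathcal{T}}$ is the $n\times n$ submatrix of $B$ with columns indexed by $\mathcal{T}$. If $r<n$, I would replace $B$ by $r$ linearly independent rows. The same formula then holds with $\det(B_{\mathcal{T}})^2$ replaced by the squared product of the nonzero singular values (equivalently, a torsion-type quantity). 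The number $n-r$ of discarded rows should be $o(n)$ by the local-limit rank computations of~\cite{nachmias2025local}, so this change should cost only $o(n)$ on the logarithmic scale. I have not checked this; it needs verifying.

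\textbf{Step 2: the denominator.} Write $BB^{T}=dI+A$. Since $\mathfrak{G}$ is $C_4$-free, two vertices of $V$ share at most one neighbour in $U$, so $A$ is a signed matrix with entries in $\{0,\pm1\}$ and exactly $dk$ nonzero entries in each row. Hence $\Tr(A^2)=ndk$. More generally, the normalized moments $\frac1n\Tr\big((A/d)^j\big)$ are bounded by tree-like path counts, which are $O\big((k/d)^{j/2}\big)$. Expanding the logarithm then gives
\[
\frac1n\log\det(BB^{T})-\log d=\frac1n\log\det(I+A/d)=O(k/d)\to0 .
\]
Two technical points arise. The series expansion is only justified once the spectrum of $A/d$ is bounded away from $-1$; if this fails, I would instead use weak convergence of the spectral measure of $A/d$ to $\delta_0$ together with positive semidefiniteness of $BB^{T}$. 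With Step 1, this reduces the theorem to showing that
\[
\frac{1}{n}\log|\det B_{\mathcal{T}_n}|\to c_k\quad\text{in probability.}
\]

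\textbf{Step 3: the numerator.} I would split this into an upper and a lower bound.

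For the upper bound, $\log|\det B_{\mathcal{T}}|=\frac12\log\det(B_{\mathcal{T}}^{T}B_{\mathcal{T}})$, and this is upper semicontinuous under local convergence: use the truncation $\log(\max(x,\varepsilon))$, which is a bounded continuous function of the spectral measure. So the local weak limit of $(\mathfrak{G}[\mathcal{T}_n],o_n)$ gives
\[
\frac1n\log|\det B_{\mathcal{T}_n}|\le c_k+o(1)
\]
with high probability, provided $c_k$ (Definition~\ref{ckdef}) is the Fuglede--Kadison-type determinant of the limiting operator, as in~\cite{meszaros2026homology}. The truncation loses nothing in the limit because the spectral measures have uniformly bounded support; the entries are bounded and each column has $k+1$ entries.

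For the lower bound, Lück-approximation-type arguments fail in general, so I would use the measure itself. We have $\mathbb{E}[-\log\mathbb{P}^H(\mathcal{T}_n)]=\mathrm{Ent}(\mathbb{P}^H)$. I would compute this entropy up to $o(n)$ via the chain rule, revealing $\mathcal{T}_n$ vertex by vertex in a local order. Each conditional entropy is a local functional whose limit is determined by the local weak limit of~\cite{nachmias2025local} (here the identity with the $1$-out limit is useful). The resulting entropy asymptotics are
\[
\mathrm{Ent}(\mathbb{P}^H)=n\log d-2c_kn+o(n),
\]
which is exactly the torsion-growth computation of~\cite{meszaros2026homology}. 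This identifies the mean of $\frac1n\log|\det B_{\mathcal{T}_n}|$ as $c_k+o(1)$. Combined with the one-sided high-probability upper bound, and the fact that the quantity is bounded above deterministically by Hadamard's inequality, a Markov-type argument on the nonnegative part $c_k+\varepsilon-\frac1n\log|\det B_{\mathcal T_n}|$ upgrades this to convergence in probability.

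\textbf{Main obstacle.} I expect the hardest step to be this lower bound, i.e.\ ruling out a macroscopic set of atypically small singular values of $B_{\mathcal{T}_n}$. Concretely, it requires establishing the entropy asymptotics, with conditional entropies that are uniformly integrable so that local convergence transfers to the sum.
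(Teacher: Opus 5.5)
\textbf{There is a genuine gap, and it sits in Step~2, not only in the lower bound you flag.} By Lemma~\ref{lemmaBTBvsP}, your Step~1 is the exact identity $\mathbb{P}^H(\mathcal{T})=\det(B^TB\restriction\mathcal{T})/{\det}^+(BB^T)$. So your route needs $\frac1n\log{\det}^+(BB^T)=\log d+o(1)$, and your argument does not give this.
\begin{itemize}
\item The series for $\log\det(I+A/d)$ needs the spectrum of $A/d$ in $(-1,1]$. In general it only lies in $[-1,k]$, and it may contain $-1$ (when $r<n$) or points arbitrarily close to it.
\item The fallback, weak convergence of the nonzero spectrum of $d^{-1}BB^T$ to $\delta_1$ plus positive semidefiniteness, does not rule out a vanishing fraction of nonzero eigenvalues small enough to shift $\frac1n\log{\det}^+$ by a constant.
\end{itemize}
This normalization is exactly condition~\eqref{niceB}. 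The paper does not derive it; it imposes it as a hypothesis in Corollary~\ref{thmBTB}, and under the hypotheses of Lemma~\ref{matrixtreeequivalence} it is equivalent to \eqref{nicetau}. Even granting the theorem, integrality of $\det(B^TB\restriction\mathcal{T})$ and Jensen only confine $\frac1r\log{\det}^+(d^{-1}BB^T)$ to $[-2c_k-o(1),o(1)]$. That is why it is automatic for graphs ($c_1=0$) but not for $k\ge 2$.

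Moreover, given the theorem, your Step~3 target $\frac1n\log\det(B^TB\restriction\mathcal{T}_n)\to 2c_k$ is itself equivalent to \eqref{niceB}. You have split a true statement into two halves, neither of which is known in this generality. The entropy route does not rescue this. The asymptotics $H(\mathcal{T}_n)=n\log d-2c_kn+o(n)$ in this setting is Corollary~\ref{corEntropy}, a consequence of the theorem. \cite{meszaros2026homology} treats the complete complex, where all nonzero eigenvalues of $BB^T$ equal $N$ and the normalization is free. Nothing in your sketch explains why conditional entropies of a determinantal process revealed in a local order should be local functionals.

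\textbf{The missing idea} is to bypass ${\det}^+(BB^T)$ altogether and study the empirical spectral measure $\psi_{\mathcal{T}}$ of $dP_H\restriction\mathcal{T}$. For it, $\frac1r\log\mathbb{P}^H(\mathcal{T})=-\log d+\int\log t\,\psi_{\mathcal{T}}(dt)$ exactly.
\begin{itemize}
\item \emph{Identifying the limit.} Take orthonormal eigenvectors $g_i$ of $d^{-1}B^TB$ with nonzero eigenvalues $\lambda_i$. Then $P_H\restriction\mathcal{T}=\sum_i g_{\mathcal{T},i}g_{\mathcal{T},i}^T$ and $B^TB\restriction\mathcal{T}=\sum_i d\lambda_i\, g_{\mathcal{T},i}g_{\mathcal{T},i}^T$. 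All but $kn/(\varepsilon d)$ of the $\lambda_i$ lie within $\sqrt\varepsilon$ of $1$. A low-rank perturbation and a $(1\pm\sqrt\varepsilon)$ semidefinite comparison therefore transfer the Nachmias--Peled limit $\eta_k$ from $B^TB\restriction\mathcal{T}_n$ to $\psi_{\mathcal{T}_n}$.
\item \emph{Small eigenvalues.} Your instinct to ``use the measure itself'' is right, but it enters through inverse moments rather than entropy. Let $\lambda_{1,\mathcal{T}}\le\lambda_{2,\mathcal{T}}\le\cdots$ be the eigenvalues of $P_H\restriction\mathcal{T}$. Under $\mathbb{P}^H$ one has $\mathbb{E}\prod_{i\le\ell}(d\lambda_{i,\mathcal{T}})^{-1}\le(e^2mr/(d\ell^2))^\ell$ \cite{meszaros2026homology}. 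Markov, a union bound over $\ell$ and Borel--Cantelli then give uniform integrability of $\log t$ near $0$.
\item \emph{Large eigenvalues.} The trace bound $\frac dn\Tr(P_H\restriction\mathcal{T})\le 8k$ controls large $t$.
\end{itemize}

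\textbf{Two smaller slips.} First, the spectra of $B^TB\restriction\mathcal{T}$ are not uniformly bounded, since a vertex of $V$ may have up to $d$ neighbours in $\mathcal{T}$. Your upper-semicontinuity step therefore needs the first-moment bound instead. Second, $n-r=o(n)$ is not enough on the logarithmic scale. You need $(n-r)\log d=o(n)$, which follows from $n-r\le 2kn/d$.
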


In addition to Theorem~\ref{thmgeneralcoupling}, we find several further applications of Theorem~\ref{thm1}. 
First, the following immediate corollary regarding the number $\tau(G)$ of spanning trees of a high-degree regular graph $G$ was proved by Alon~\cite{alon1990number}.
\begin{corollary}\label{CorNumberOfTrees}
Let $G_n$ be a simple $d=d_n$-regular connected graph on $n$ vertices. If $d\to\infty$ as $n\to\infty$ then
\[
\frac{\log(\tau(G_n))}n-\log d\to 0\,.
\]
\end{corollary}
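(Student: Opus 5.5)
The plan is to realize the uniform spanning tree of $G_n$ as a special case of the determinantal setup of Section~\ref{sec:det_intro} with $k=1$, and then read off the count of spanning trees from Theorem~\ref{thm1}, using $c_1=0$.

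\textbf{Encoding.} Let $V$ be the vertex set of $G_n$ and $U$ its edge set. Put $vu\in E$ whenever $v$ is an endpoint of the edge $u$. Then every $v\in V$ has degree $d$ and every $u\in U$ has degree $2=k+1$, so $k=1$. The graph $\mathfrak{G}$ is $C_4$-free: a $4$-cycle would need two distinct edges of $G_n$ sharing both endpoints, which is impossible since $G_n$ is simple.

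\textbf{Identifying the measure.} Choose an orientation of $G_n$ and let $B$ be the signed incidence matrix, so $B_{v,u}=\pm1$ according to whether $v$ is the head or tail of $u$. The row space $H$ is the cut space, of dimension $r=n-1$ because $G_n$ is connected. The classical transfer-current theorem (Kirchhoff's matrix-tree theorem in projection form, due to Burton--Pemantle) says that $\det(P_H\restriction\mathcal{T})$ equals $1/\tau(G_n)$ if $\mathcal{T}$ is a spanning tree and $0$ otherwise. Hence $\mathbb{P}^{H_n}$ is the uniform spanning tree measure, and $\mathbb{P}^{H_n}(\mathcal{T}_n)=1/\tau(G_n)$ almost surely.

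\textbf{Applying Theorem~\ref{thm1}.} The quantity $\log\mathbb{P}^{H_n}(\mathcal{T}_n)/n+\log d$ is therefore the deterministic number $-\log\tau(G_n)/n+\log d$. Theorem~\ref{thm1} says it converges in probability to $2c_1=0$. Convergence in probability of deterministic numbers is ordinary convergence, which is exactly the claim.

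The only step needing care is the identification of $\mathbb{P}^{H}$ with the uniform spanning tree. I would either cite the transfer-current theorem directly, or prove it in two lines with Cauchy--Binet: $\det(P_H\restriction\mathcal{T})$ is proportional to $\det(B'\restriction\mathcal{T})^2$, where $B'$ is $B$ with one row deleted. By unimodularity of incidence matrices this square is $1$ exactly on spanning trees and $0$ otherwise, and normalization forces the constant to be $1/\tau(G_n)$.
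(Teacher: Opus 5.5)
Your proposal is correct and matches the paper's proof: encode $G_n$ by its signed incidence matrix (a $k=1$ instance), observe that $\mathbb{P}^{H_n}(\mathcal{T}_n)=1/\tau(G_n)$ is deterministic, and apply Theorem~\ref{thm1} with $c_1=0$. You take $c_1=0$ as given. The paper justifies it by applying the same argument to $K_n$ together with Cayley's formula $\tau(K_n)=n^{n-2}$, and adding that one line would make your proof self-contained.
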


Second, we determine the asymptotics of the Shannon-entropy $H(\mathcal{T}_n)$ of the determinantal law.

\begin{corollary}\label{corEntropy}
    Let $\mathcal{T}_n$ be drawn from the measure $\mathbb{P}^{H_n}$. Then, as $n\to\infty$,
    \[
        \frac{H(\mathcal{T}_n)}n-\log(d)\to-2c_k\,.
    \]
\end{corollary}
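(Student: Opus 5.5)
Since $H(\mathcal{T}_n)=-\mathbb{E}\log\mathbb{P}^{H_n}(\mathcal{T}_n)$, the corollary says that the random variables
\[
X_n:=\frac{\log\mathbb{P}^{H_n}(\mathcal{T}_n)}{n}+\log d
\]
satisfy $\mathbb{E}X_n\to 2c_k$. Theorem~\ref{thm1} already gives $X_n\to 2c_k$ in probability. So the plan is to upgrade this to convergence of expectations by proving uniform integrability of $(X_n)$, treating the upper and lower tails separately.

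\emph{Upper tail.} Here $\mathbb{P}^{H_n}(\mathcal{T}_n)\le 1$, so $X_n\le \log d$. This bound grows with $n$, so it is not enough on its own, but a counting argument fixes it. The support of $\mathbb{P}^{H_n}$ is contained in the $r$-subsets of $U$, and every $u\in U$ has degree $k+1$, so $m=nd/(k+1)$. Since $r\le n$, the support has at most $\binom{m}{r}\le (em/n)^{n}\le (ed)^{n}$ elements when $m\ge n$. For $t>1$, the event $\{X_n>t\}$ is the event $\{\mathbb{P}^{H_n}(\mathcal{T}_n)>e^{n(t-\log d)}\}$. Each atom in this event has probability exceeding $e^{n(t-\log d)}$, so there are at most $e^{-n(t-\log d)}$ such atoms. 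Summing over the actual atoms in the event shows it has total probability at most
\[
e^{n(t-\log d)}\cdot(\text{number of such atoms})\le e^{n(t-\log d)}\cdot (ed)^n=e^{n(t+1)}.
\]
This bound is useless as stated. The correct estimate is the reverse: on $\{X_n>t\}$ the points are heavy, so there are few of them, and no contradiction arises. The positive part needs a different argument, given next.

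\emph{Positive part via entropy.} The entropy satisfies $H(\mathcal{T}_n)\le\log|\mathrm{supp}|\le n(1+\log d)$, which gives $\mathbb{E}X_n\ge -1$. The cleaner route is the standard AEP argument. Let $A_n=\{|X_n-2c_k|<\varepsilon\}$, so $\mathbb{P}(A_n)\to1$. Split
\[
\mathbb{E}X_n=\mathbb{E}[X_n\mathbf 1_{A_n}]+\mathbb{E}[X_n\mathbf 1_{A_n^c}].
\]
The first term is within $\varepsilon$ of $2c_k$ asymptotically. For the second term, write $q_n=\mathbb{P}(A_n^c)\to0$. The contribution $\mathbb{E}[X_n\mathbf 1_{A_n^c}]$ can be bounded on both sides:
\begin{itemize}
\item From above, I use the log-sum inequality on the restricted measure: $-\sum_{x\in A_n^c}p(x)\log p(x)\le q_n\log(|\mathrm{supp}|/q_n)$. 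This gives $\mathbb{E}[X_n\mathbf 1_{A_n^c}]\ge q_n\log d-q_n(1+\log d)+q_n\log(q_n)/n$, which tends to $0$.
\item From above again, $X_n\le\log d$ gives $\mathbb{E}[X_n\mathbf 1_{A_n^c}]\le q_n\log d$.
\end{itemize}

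\emph{Main obstacle.} The upper bound $q_n\log d$ only tends to $0$ if $q_n=o(1/\log d)$, and Theorem~\ref{thm1} gives only $q_n\to0$. So this term is the real obstacle. To handle it I would rerun the proof of Theorem~\ref{thm1} to get a quantitative bound, for instance a concentration estimate via negative association or the Lipschitz property of $\log\mathbb{P}$ along a chain-rule decomposition.

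An alternative that avoids this is to write $\log\mathbb{P}^{H_n}(\mathcal{T}_n)$ as a sum of $n$ conditional log-probabilities, from the sequential sampling of the determinantal process. Each term is bounded above by $0$ and has conditional expectation controlled through the local limit. The entropy is then exactly the sum of the expected conditional entropies, whose local limit gives the constant $-2c_k$ in the same way as in the proof of Theorem~\ref{thm1}. I expect this entropy-chain-rule route to be the one that actually closes the argument.
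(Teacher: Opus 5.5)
Your proposal has a genuine gap, and it is the one you flag as the main obstacle. The reduction to uniform integrability is fine. Your log-sum/support-size argument correctly gives $\liminf_n\mathbb{E}X_n\ge 2c_k$. Note that your first bullet is a lower bound, not an upper bound: it shows $\mathbb{E}[X_n\mathbf 1_{A_n^c}]\ge -q_n+q_n\log(q_n)/n\to0$.

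The reverse inequality $\limsup_n\mathbb{E}X_n\le 2c_k$, i.e.\ the entropy lower bound $H(\mathcal{T}_n)\ge n\log d-2c_kn-o(n)$, is not proved. It requires ruling out that atoms of atypically large probability carry non-negligible weight. The only control you have on them is $X_n\le\log d$, which is useless because $q_n$ is only known to be $o(1)$.

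Neither proposed fix is carried out. A bound $q_n=o(1/\log d)$ would mean redoing Theorem~\ref{thm1} with rates, and the argument via local weak convergence gives none. The chain-rule route is only sketched. Identifying the summed conditional entropies to precision $o(n)$ would essentially be a new proof of the equipartition property, not a consequence of it. Your ``upper tail'' paragraph, as you note yourself, proves nothing.

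The missing idea is that the determinantal weights have deterministically bounded oscillation on the support, so no tail estimate is needed. By Cauchy--Binet (Lemma~\ref{lemmaBTBvsP}), for every $r$-subset $\mathcal{T}$,
\[
\mathbb{P}^H(\mathcal{T})=\frac{\det(B^TB\restriction\mathcal{T})}{{\det}^+(BB^T)}.
\]
For $\mathcal{T}$ in the support, $\det(B^TB\restriction\mathcal{T})$ is a nonzero integer, hence at least $1$. All its diagonal entries equal $k+1$, so by Hadamard's inequality it is at most $(k+1)^r$. Therefore, for any two atoms $\mathcal{T},\mathcal{T}'$,
\[
\left|\log\mathbb{P}^H(\mathcal{T})-\log\mathbb{P}^H(\mathcal{T}')\right|\le r\log(k+1)\le n\log(k+1).
\]
So all values taken by $X_n$ lie in an interval of length $\log(k+1)$. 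Since $\mathbb{P}(A_n)\to1$, some atom has $X_n<2c_k+\varepsilon$, and hence every atom has $X_n\le 2c_k+\varepsilon+\log(k+1)$. In your decomposition this gives $\mathbb{E}[X_n\mathbf 1_{A_n^c}]\le q_n\bigl(2c_k+\varepsilon+\log(k+1)\bigr)\to0$.

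Equivalently, as in the paper, $|X_n-2c_k|\le 1+\log(k+1)$ for all large $n$, and bounded convergence finishes the proof. With this bound, your log-sum argument for the other direction also becomes unnecessary.
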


\medskip
\subsection{Homology torsion growth in hypertrees and hyperforests}\label{SecTorsionIntroduction}
For the third application of Theorem~\ref{thm1}, we consider cell complexes and their determinantal hyperforests, which form one of the
main classes of examples in our general setting.

A cell complex is a topological space obtained by attaching balls, called cells, along their boundaries.
For a cell complex $\Delta$, write $F_i(\Delta)$ for its set of $i$-dimensional cells, $f_i(\Delta)=|F_i(\Delta)|$ and 
$H_i(\Delta,A)$ for its $i$-th homology group with coefficients in an abelian group $A$. 

Suppose that $\Delta$ is a finite $\ell$-dimensional cell complex, with $\ell\geq1$. A subcomplex $S\subset\Delta$ is called a \emph{spanning hyperforest} 
if it contains the $(\ell-1)$-skeleton of $\Delta$, satisfies $H_\ell(S,\mathbb{Q})=0$, and the inclusion $S\subset\Delta$ induces an isomorphism from $H_{\ell-1}(S,\mathbb{Q})$ to $H_{\ell-1}(\Delta,\mathbb{Q})$. 
Equivalently, its $\ell$-cells index a basis of the column space of the $\ell$-dimensional boundary matrix $B=\partial_\ell$. 
We write $\mathcal{S}_\ell(\Delta)$ for the set of spanning $\ell$-hyperforests of $\Delta$. For background on these higher dimensional counterparts of spanning trees, see the survey of Duval, Klivans and Martin~\cite{DKM} and the references therein.
The determinantal measure associated with the row space $H$ of $B$ is supported on the sets $F_\ell(S)$ with $S\in\mathcal{S}_\ell(\Delta)$. 
The corresponding random subcomplex is called a \emph{determinantal hyperforest} of $\Delta$.
Lyons~\cite[Proposition~3.1]{lyons2009random} showed that this measure satisfies
\[
    \mathbb{P}^H(F_\ell(S))=\frac{t_{\ell-1}(S)^2}{\tau_\ell(\Delta)}\,,
\]
where
\[
    t_{\ell-1}(S)=|\tors H_{\ell-1}(S,\mathbb{Z})|,
    \qquad
    \tau_\ell(\Delta)=\sum_{S\in\mathcal{S}_\ell(\Delta)}t_{\ell-1}(S)^2\,,
\]
and $\tors A$ denotes the torsion subgroup of an abelian group $A$.

Fix integers $\ell,k\geq1$, and let $(\Delta_j)$ be a sequence of finite cell complexes of dimension $\ell$ satisfying:
\begin{enumerate}[(1)]
    \item\label{polytop1} The complex is polytopal: its closed cells are polytopes, and the intersection of any two closed cells is a face of each of them;
    \item\label{polytop2} Every $\ell$-cell has precisely $k+1$ facets of dimension $\ell-1$;
    \item Every $(\ell-1)$-cell is contained in precisely $d=d_j$ cells of dimension $\ell$, where $d_j\to\infty$;
    \item\label{polytop4} No pair of distinct $(\ell-1)$-cells is contained in more than one $\ell$-cell.
\end{enumerate}
After choosing orientations of the cells, the boundary matrix $B_j=\partial_\ell$ is a signed adjacency matrix of the bipartite incidence graph between the $(\ell-1)$-cells and the $\ell$-cells. 
The conditions above make this graph simple, $C_4$-free and $(d,k+1)$-biregular, so it fits the setting of Section~\ref{sec:det_intro}. If $(\Delta_j)$ is a simplicial complex, then with the choice of $\ell=k$, the conditions \eqref{polytop1}, \eqref{polytop2} and \eqref{polytop4} are always satisfied.


In this general setting, there is a corresponding natural $1$-out complex $\mathcal{T}_j^{1o}$: it contains the $(\ell-1)$-skeleton of $\Delta_j$, and each $(\ell-1)$-cell independently chooses a uniform $\ell$-cell containing it, which is then included in $\mathcal{T}_j^{1o}$.  
Theorem~\ref{thmgeneralcoupling} gives a coupling of the determinantal hyperforest $\mathcal{T}_j$ and $\mathcal{T}_j^{1o}$ 
under which they differ by $o(f_{\ell-1}(\Delta_j))$ top-dimensional cells in expectation.

A prominent special case is given by taking $\Delta$ to be the complete $k$-dimensional simplicial complex on the vertex set $[N]$. 
Here $\ell=k$, $f_{k-1}=\binom{N}{k}$ and $d=N-k$. 
For $k\geq2$, a spanning hyperforest in this complex is called a \emph{$k$-hypertree}: it has a complete $(k-1)$-skeleton, precisely $\binom{N-1}{k}$ faces of dimension $k$, and a finite homology group $H_{k-1}(S,\mathbb{Z})$. 
Kalai's generalization of Cayley's formula~\cite{kalai1983enumeration} states that
\begin{equation}\label{kalaiformula}
    \sum_S |H_{k-1}(S,\mathbb{Z})|^2=N^{\binom{N-2}{k}}\,,
\end{equation}
where the sum is over all $k$-hypertrees on $[N]$. The corresponding \emph{determinantal hypertree} is denoted by $\mathcal{T}_{N,k}$, and 
it has been studied extensively~\cite{kahle2022topology,meszaros2022local,werf2022determinantal,meszaros2023coboundary,meszaros20252,meszaros2025bounds,meszaros2025using,meszaros2026cocycles}.

Additionally, the first author~\cite{meszaros2026homology} recently proved that the homology torsion of the determinantal hypertree has a deterministic exponential growth rate. More precisely, as $N\to\infty$,
\[
    \frac{\log|H_{k-1}(\mathcal{T}_{N,k},\mathbb{Z})|}{\binom{N}{k}}
    \text{ converges to }c_k\text{ in probability}.
\]
Since $c_k>0$ for $k\geq2$, the homology group typically has exponentially large order. 

As a consequence of Theorem~\ref{thm1}, we extend this result to determinantal hyperforests in the polytopal setting above under an assumption on the weighted enumeration $\tau_\ell(\Delta_j)$,
which is the higher dimensional weighted analogue of the conclusion of Corollary~\ref{CorNumberOfTrees}.

\begin{theorem}\label{thmBTB2}
    In the polytopal setup above, let $\mathcal{T}_j$ be a determinantal $\ell$-hyperforest of $\Delta_j$. Assume that, as $j\to\infty$,
    \begin{equation}\label{nicetau}
\frac{\log(\tau_\ell(\Delta_j))}{f_{\ell-1}(\Delta_j)}-\log d\to 0\,.
    \end{equation}
    Then, as $j\to\infty$,
    \[
        \frac{\log|\tors H_{\ell-1}(\mathcal{T}_j,\mathbb{Z})|}{f_{\ell-1}(\Delta_j)}
        \text{ converges to }c_k\text{ in probability.}
    \]
\end{theorem}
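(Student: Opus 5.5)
This follows directly from Theorem~\ref{thm1} combined with Lyons' formula. For a determinantal hyperforest we have
\[
\log \mathbb{P}^{H_j}(F_\ell(\mathcal{T}_j)) = 2\log t_{\ell-1}(\mathcal{T}_j) - \log \tau_\ell(\Delta_j).
\]
Here $t_{\ell-1}(\mathcal{T}_j)=|\tors H_{\ell-1}(\mathcal{T}_j,\mathbb{Z})|$. The first step is to check that the polytopal setup is an instance of the general setting of Section~\ref{sec:det_intro}. We take $V$ to be the $(\ell-1)$-cells, so $n=f_{\ell-1}(\Delta_j)$, and $U$ to be the $\ell$-cells. We take $B$ to be the transpose of $\partial_\ell$, so that the row space $H$ is the row space of $\partial_\ell^{T}$ (the column space of $\partial_\ell^T$ viewed in $\mathbb{R}^{F_\ell}$). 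Its projection defines the determinantal hyperforest. The biregularity, simplicity and $C_4$-freeness are exactly conditions (1)--(4) as remarked in the text. So Theorem~\ref{thm1} applies with $n=f_{\ell-1}(\Delta_j)$ and gives
\[
\frac{2\log t_{\ell-1}(\mathcal{T}_j)-\log\tau_\ell(\Delta_j)}{f_{\ell-1}(\Delta_j)}+\log d\to 2c_k
\]
in probability.

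Next, we substitute the hypothesis \eqref{nicetau}. This is a deterministic statement that $\log\tau_\ell(\Delta_j)/f_{\ell-1}(\Delta_j)=\log d+o(1)$. Adding it to the previous display cancels the two $\log d$ terms, and dividing by $2$ yields
\[
\frac{\log t_{\ell-1}(\mathcal{T}_j)}{f_{\ell-1}(\Delta_j)}\to c_k
\]
in probability. This is the claim.

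The only point requiring care is the identification in the first step, namely that the measure $\mathbb{P}^H(\mathcal{T})=\det(P_H\restriction \mathcal{T})$ of Section~\ref{sec:det_intro} coincides with Lyons' measure $t_{\ell-1}(S)^2/\tau_\ell(\Delta)$. The paper defines the determinantal hyperforest precisely via the row space $H$ of $B=\partial_\ell$ indexed appropriately, so this reduces to bookkeeping about which index set plays the role of $U$. Lyons' Proposition~3.1 is quoted as given, and the signs of the orientation choices do not affect $P_H$ beyond a diagonal $\pm1$ conjugation, which leaves principal minors unchanged. I expect no real obstacle; all the analytic work is contained in Theorem~\ref{thm1}.
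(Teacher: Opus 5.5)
Your proof is correct and follows the paper's argument exactly: apply Theorem~\ref{thm1} with $n=f_{\ell-1}(\Delta_j)$, rewrite $\log\mathbb{P}^{H_j}(\mathcal{T}_j)$ via Lyons' formula as $2\log t_{\ell-1}(\mathcal{T}_j)-\log\tau_\ell(\Delta_j)$, and cancel the $\log d$ terms using \eqref{nicetau}. One bookkeeping slip: in the paper's convention $B=\partial_\ell$ itself, with rows indexed by the $(\ell-1)$-cells and columns by the $\ell$-cells, not its transpose, so that its row space is the subspace of $\mathbb{R}^{F_\ell}$ you intend.
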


The assumption~\eqref{nicetau} holds for the complete simplicial complex by Kalai's formula, so Theorem~\ref{thmBTB2} recovers the result of~\cite{meszaros2026homology}. We prove the theorem in Section~\ref{SecTop}, 
where we also give spectral sufficient conditions for~\eqref{nicetau} for complexes that are acyclic in positive codimension.

The remainder of the paper is organized as follows. In Section~\ref{SecPreliminaries}, we recall the preliminaries needed for the proofs. In Section~\ref{SecUSTCoupling}, we prove Theorems~\ref{thm1dcoupling} and~\ref{thm:local-coupling}. Section~\ref{proofofthm1} is devoted to the proof of the asymptotic equipartition property. In Section~\ref{SecGeneralCoupling}, we prove Theorem~\ref{thmgeneralcoupling}, and in Section~\ref{SecAEPCorollaries} we derive further consequences of the asymptotic equipartition property. Finally, in Section~\ref{SecTop}, we prove Theorem~\ref{thmBTB2} and discuss spectral sufficient conditions for homology torsion growth.

\section{Preliminaries}\label{SecPreliminaries}
For the proof of Theorem~\ref{thm1dcoupling} only Section~\ref{SecMarton} is needed from this section.

\subsection{Marton's coupling lemma}\label{SecMarton}

Given two random variables $X$ and $Y$ taking values in a finite set $\mathcal{X}$, their Kullback–Leibler divergence is defined as
\[\dkl(Y||X)=\sum_{x\in \mathcal{X}}\mathbb{P}(Y=x)\log\frac{\mathbb{P}(Y=x)}{\mathbb{P}(X=x)}.\]
Note that if $\mathbb{P}(Y=x)=0$, then the corresponding term is interpreted as $0$.

\begin{lemma}[Marton \cite{marton1986simple}]\label{lemmaMarton}
    Let $X=(X(1),X(2),\dots,X(n))$ and $Y=(Y(1),Y(2),\dots,Y(n))$ be two sequences of random variables taking values in some finite set. Assume that $X(1),X(2),\dots,X(n)$ are independent. Then there is a coupling of $X$ and $Y$ such that under this coupling
    \[\frac{1}n\sum_{i=1}^n \mathbb{P}(X(i)\neq Y(i))\le \sqrt{\frac{1}n \dkl(Y||X)}.\]
\end{lemma}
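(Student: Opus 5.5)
The plan is the classical sequential construction: build the coupling one coordinate at a time, use the maximal (total variation) coupling at each step, and then control the total mismatch using Pinsker's inequality together with the chain rule for Kullback–Leibler divergence. Write $\mu_i$ for the law of $X(i)$. For $y_{<i}=(y_1,\dots,y_{i-1})$ with $\mathbb{P}(Y_{<i}=y_{<i})>0$, write $\nu_i(\cdot\mid y_{<i})$ for the conditional law of $Y(i)$ given $Y_{<i}=y_{<i}$.

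\textbf{Step 1 (construction).} We define the pair $(X,Y)$ recursively. Suppose $(X_{<i},Y_{<i})$ has been sampled and $Y_{<i}=y_{<i}$. Draw $(X(i),Y(i))$ from a maximal coupling of $\mu_i$ and $\nu_i(\cdot\mid y_{<i})$, conditionally independently of $X_{<i}$ given $Y_{<i}$. By construction:
\begin{itemize}
\item $Y(i)$ has conditional law $\nu_i(\cdot\mid Y_{<i})$, so $Y$ has the correct joint law;
\item the conditional law of $X(i)$ given $(X_{<i},Y_{<i})$ is always $\mu_i$, so $X(i)$ is independent of $X_{<i}$ with law $\mu_i$. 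Because the $X(i)$ in the statement are independent, $X$ also has the correct joint law.
\end{itemize}
This is the one point that needs care: the fixed $X$-marginal of each step's coupling is exactly what makes the independence assumption on $X$ enter.

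\textbf{Step 2 (per-coordinate bound).} By maximality,
\[
\mathbb{P}\bigl(X(i)\neq Y(i)\mid Y_{<i}\bigr)=d_{TV}\bigl(\nu_i(\cdot\mid Y_{<i}),\mu_i\bigr).
\]
Pinsker's inequality then gives
\[
\mathbb{P}\bigl(X(i)\neq Y(i)\bigr)\le \mathbb{E}\sqrt{\tfrac12 D_i(Y_{<i})},\qquad D_i(y_{<i}):=\dkl\bigl(\nu_i(\cdot\mid y_{<i})\,\|\,\mu_i\bigr).
\]

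\textbf{Step 3 (chain rule).} Since $X$ has product law $\prod_i\mu_i$, expanding $\log\frac{\mathbb{P}(Y=y)}{\mathbb{P}(X=y)}$ as a telescoping sum of conditional log-ratios gives
\[
\dkl(Y\|X)=\sum_{i=1}^n \mathbb{E}\,D_i(Y_{<i}).
\]
Terms with zero probability are handled by the stated convention. If $\dkl(Y\|X)=\infty$, the statement is trivial.

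\textbf{Step 4 (conclusion).} By concavity of the square root (Jensen over the randomness and over the uniform average in $i$),
\[
\frac1n\sum_{i=1}^n\mathbb{P}\bigl(X(i)\neq Y(i)\bigr)\le \sqrt{\frac{1}{2n}\sum_{i=1}^n\mathbb{E}\,D_i(Y_{<i})}=\sqrt{\frac{\dkl(Y\|X)}{2n}}.
\]
This is even slightly stronger than the claimed bound $\sqrt{\dkl(Y\|X)/n}$.

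Beyond the law check in Step 1, everything else is standard: Pinsker's inequality and the chain rule.
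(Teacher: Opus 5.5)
Your argument is correct: the sequential maximal coupling preserves both marginals (the fixed $X$-marginal at each step is exactly where the independence of the $X(i)$ enters), and Pinsker's inequality, the chain rule for a product reference measure, and Jensen's inequality give the bound with the sharper constant $\sqrt{\dkl(Y\|X)/(2n)}$. The paper does not prove this lemma itself but cites Marton, and your construction is essentially the standard argument from that source.
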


\subsection{The space of rooted graphs}\label{SecRootedGraphs}
A rooted graph is a pair $(G,o)$, where $G$ is a locally finite connected graph, and $o$ is a distinguished vertex of $G$ called the root. Given a rooted graph $(G,o)$ and a radius $r$, the $r$-neighborhood $B_r(G,o)$ of the root is defined as the subgraph of $G$ induced by the vertices which are at most distance $r$ from the root. Choosing $o$ as the root, we can regard $B_r(G,o)$ as a rooted graph. Let $\mathcal{G}_\bullet$ be the set of rooted graphs up to root preserving isomorphisms. Then $\mathcal{G}_\bullet$ can be turned into a metric space as follows. Given two rooted graphs $(G_1,o_1)$ and $(G_2,o_2)$, let $r$ be the largest radius, possibly $\infty$, such that there is a root preserving isomorphism between $B_r(G_1,o_1)$ and $B_r(G_2,o_2)$. Then the distance of $(G_1,o_1)$ and $(G_2,o_2)$ is defined to be $2^{-r}$, with $2^{-\infty}=0$. This metric structure on $\mathcal{G}_\bullet$ also induces a measurable structure, so it makes sense to speak about random rooted graphs, and the weak convergence of random rooted graphs. This convergence notion was first introduced by Benjamini and Schramm~\cite{benjamini2001recurrence}, see also~\cite{aldous2007processes}. 

A signed graph $(G,b)$ is a graph $G$ together with a labeling of the edges $b:E(G)\to\{-1,+1\}$.

A signed rooted graph is a triple $(G,o,b)$, where $(G,o)$ is a rooted graph and $b:E(G)\to \{-1,+1\}$ is a labeling of the edges. Similarly as above, let $\mathcal{G}^{\pm}_\bullet$ be the set of signed rooted graphs up to isomorphisms. The same way as before, $\mathcal{G}^{\pm}_\bullet$ can be equipped with  a metric, the only difference is that isomorphisms are required to preserve both the root and the edge labels.

Any finite graph $G$ can be turned into a random rooted graph by considering $(G_o,o)$, where $o$ is a uniform random vertex of $G$ and $G_o$ is the connected component of $o$. To ease the notation, with a slight abuse we will often just write $(G,o)$ in place of $(G_o,o)$. Also, when $G$ is bipartite, we often want to choose the random root as a uniform random vertex from one specific color class.

\subsection{Spectral measures}\label{SecSpectralMeasure}

Consider a signed graph $(G,b)$ such that $V(G)$ is countable and $G$ is locally finite. Given a vertex $v\in V(G)$, let $\chi_v\in \ell^2(V(G))$ be the characteristic vector of $v$. The adjacency operator $A=A_{(G,b)}$ of $(G,b)$ is a symmetric operator densely defined on the set of finitely supported functions of $\ell^2(V(G))$ such that
\[\langle A \chi_v,\chi_u\rangle=\begin{cases}
b(uv)&\text{ if }uv\in E(G),\\
0&\text{ otherwise.}
\end{cases}\]

Since $A$ is symmetric, it has a closure, which we also denote by $A$. Note that $A$ is not necessarily self-adjoint. However, we will see that in all the cases relevant to us $A$ will be self-adjoint.

Let $(G,o,b)$ be a signed rooted graph such that $A=A_{(G,b)}$ is self-adjoint. Then by the spectral theorem, $f(A)$ is a bounded operator for every bounded Borel function $f:\mathbb{R}\to\mathbb{R}$. Moreover, there is a probability measure $\nu=\nu_{(G,o,b)}$ on~$\mathbb{R}$ such that
\[\langle f(A)\chi_o,\chi_o\rangle=\int f(t) \,\nu(dt)\]
for all bounded Borel functions $f:\mathbb{R}\to\mathbb{R}$. We call $\nu$ the spectral measure of $(G,o,b)$. 

Given a graph $G$, we can always consider it as a signed graph by setting $b(e)=1$ for all the edges of~$G$. Thus, it makes sense to speak about the adjacency operator of locally finite graphs on a countable vertex set, and we can also define the spectral measure $\nu_{(G,o)}$ of a rooted graph.

For the proof of the next lemma, see \cite[Section 3.2]{bordenave2011rank}.

\begin{lemma}\label{Lemmaspectralconvergence}
Let $(G_n,o_n,b_n)$ be a sequence of random signed rooted graphs such that they converge weakly to the random signed rooted graph $(G,o,b)$. Assume that the adjacency operators of $(G_n,b_n)$ and $(G,b)$ are self-adjoint almost surely. Then $\mathbb{E}\nu_{(G_n,o_n,b_n)}$ converges weakly to $\mathbb{E}\nu_{(G,o,b)}$.

\end{lemma}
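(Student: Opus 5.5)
The plan is to work with Stieltjes transforms rather than moments, because vertex degrees may be unbounded and then the moments of $\nu$ need not determine the measure. For $z\in\mathbb{C}$ with $\operatorname{Im} z>0$, the spectral theorem gives
\[
s_{(G,o,b)}(z):=\int \frac{1}{t-z}\,\nu_{(G,o,b)}(dt)=\langle (A-z)^{-1}\chi_o,\chi_o\rangle ,
\]
and $|s(z)|\le 1/\operatorname{Im} z$. For probability measures on $\mathbb{R}$, pointwise convergence of Stieltjes transforms on the upper half plane implies weak convergence. So it suffices to show $\mathbb{E}\, s_{(G_n,o_n,b_n)}(z)\to \mathbb{E}\, s_{(G,o,b)}(z)$ for each such $z$. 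The uniform bound $1/\operatorname{Im} z$ lets us use dominated convergence. It therefore suffices to show that the map $(G,o,b)\mapsto s_{(G,o,b)}(z)$ is continuous in the local topology, at every point whose adjacency operator is self-adjoint.

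By Skorokhod's representation theorem, realize the sequence on one probability space so that $(G_n,o_n,b_n)\to(G,o,b)$ almost surely. Fix an outcome. Then there are radii $r_n\to\infty$ and root-, label-preserving isomorphisms $\phi_n$ from $B_{r_n}(G,o)$ onto $B_{r_n}(G_n,o_n)$. To compare operators on one Hilbert space, take the disjoint union of the graph $G$ and the part of $G_n$ outside $B_{r_n}(G_n,o_n)$, and identify the ball of $G_n$ with the ball of $G$ via $\phi_n$. This gives a space $\mathcal{H}_n=\ell^2(V(G))\oplus\ell^2(V(G_n)\setminus B_{r_n})$ containing $\ell^2(V(G))$. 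On it, let $\tilde A_n$ be the adjacency operator of $G_n$ transported via $\phi_n$, extended by the adjacency of $G$ outside the ball if needed (this auxiliary choice does not affect the diagonal resolvent entry at $o$ in the limit).

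The key step is the following. Finitely supported vectors are a core for $A$, since $A$ is the closure of its restriction to them. For each finitely supported $\psi$ in $\ell^2(V(G))$, we have $\tilde A_n\psi=A\psi$ as soon as $r_n$ exceeds the radius of $\operatorname{supp}\psi$ plus one. Then the standard criterion (Reed--Simon, Theorem VIII.25) gives strong resolvent convergence: if self-adjoint operators agree eventually with $A$ on a common core of $A$, their resolvents converge strongly. The criterion transfers directly to the setting of an increasing family of spaces: expand $(\tilde A_n-z)^{-1}-(A-z)^{-1}$ by the resolvent identity applied to $(A-z)\psi$ for $\psi$ in the core, and use density of $(A-z)\mathcal{D}$. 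In particular $\langle(\tilde A_n-z)^{-1}\chi_o,\chi_o\rangle\to\langle(A-z)^{-1}\chi_o,\chi_o\rangle$, and the left side equals $s_{(G_n,o_n,b_n)}(z)$.

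I expect the main obstacle to be exactly this bookkeeping step: making the operators of different graphs live on a common Hilbert space while keeping them self-adjoint. One must check that the spliced operator $\tilde A_n$ is still self-adjoint, or else avoid the splicing altogether. The fallback I would try first is to avoid splicing by arguing directly with the resolvent identity. Let $\psi$ be finitely supported with $\|(A-z)\psi-\chi_o\|<\varepsilon$; such $\psi$ exists because the range of $A-z$ restricted to a core is dense, by self-adjointness. Then $\phi_n(\psi)$ satisfies $\|(A_n-z)\phi_n\psi-\chi_{o_n}\|<\varepsilon$ for large $n$. Together with the bound $\|(A_n-z)^{-1}\|\le 1/\operatorname{Im} z$, this gives $|s_n(z)-\langle\phi_n\psi,\chi_{o_n}\rangle|\le\varepsilon/\operatorname{Im} z$, and similarly for $A$. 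Since $\langle\phi_n\psi,\chi_{o_n}\rangle=\langle\psi,\chi_o\rangle$, we get $\limsup|s_n(z)-s(z)|\le 2\varepsilon/\operatorname{Im} z$. This route uses only self-adjointness of each operator separately, so I expect it to complete the proof.
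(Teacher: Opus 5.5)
Your proposal is correct and follows essentially the argument the paper relies on. The paper gives no proof of its own but cites Bordenave--Lelarge--Salez (Section 3.2), where the root Stieltjes transform $\langle (A-z)^{-1}\chi_o,\chi_o\rangle$ is shown to be continuous in the local topology via strong resolvent convergence on the common core of finitely supported vectors (Reed--Simon, Theorem VIII.25), with expectations then handled by bounded convergence. Your fallback resolvent-identity argument, which transports finitely supported test vectors through the ball isomorphisms $\phi_n$, is exactly the proof of that criterion written out for the single vector $\chi_o$, and it makes the splicing construction unnecessary.
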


In certain cases the signs assigned to the edges can be ignored.

\begin{lemma}\label{treelimit}
Let $(G_n,o_n,b_n)$ be a sequence of random signed rooted graphs. Assume that the underlying random rooted graphs $(G_n,o_n)$ converge weakly to a random rooted graph $(G,o)$ such that $G$ is a tree almost surely. Assume that the adjacency operators of $(G_n,b_n)$ and $G$ are self-adjoint almost surely. Then $\mathbb{E}\nu_{(G_n,o_n,b_n)}$ converges weakly to $\mathbb{E}\nu_{(G,o)}$.
\end{lemma}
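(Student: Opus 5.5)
The plan is to reduce Lemma~\ref{treelimit} to Lemma~\ref{Lemmaspectralconvergence} by a diagonal/subsequence argument, combined with the fact that on a tree the signs can be removed by a diagonal unitary conjugation that fixes $\chi_o$.

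\emph{Step 1: the signed limit exists along subsequences and is a signed tree.} Consider the signed rooted graphs $(G_n,o_n,b_n)$ as random elements of $\mathcal{G}^{\pm}_\bullet$. Their underlying rooted graphs converge weakly, so they are tight in $\mathcal{G}_\bullet$. For each radius $r$ and each possible unsigned $r$-ball there are only finitely many sign patterns on it. Hence tightness lifts to $\mathcal{G}^{\pm}_\bullet$. By compactness, every subsequence has a further subsequence along which $(G_n,o_n,b_n)$ converges weakly to some random signed rooted graph $(G',o',b')$. Forgetting signs is continuous, so $(G',o')$ has the law of $(G,o)$; in particular, $G'$ is a tree almost surely.

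\emph{Step 2: signs on a tree are a gauge.} Let $(T,o,b)$ be a signed rooted tree. Define $s:V(T)\to\{\pm1\}$ by $s(o)=1$ and, for each vertex $v$, $s(v)$ equal to the product of the signs along the unique path from $o$ to $v$. Let $S$ be the diagonal unitary $S\chi_v=s(v)\chi_v$. Then $SA_{(T,b)}S^{-1}=A_T$ on finitely supported vectors. Since $S$ is unitary and preserves finitely supported functions, the closures are also conjugate. It follows that $A_{(T,b)}$ is self-adjoint if and only if $A_T$ is, and then $f(A_{(T,b)})=S^{-1}f(A_T)S$. Because $S\chi_o=\chi_o$, we get $\nu_{(T,o,b)}=\nu_{(T,o)}$.

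\emph{Step 3: conclude.} Along the subsequence from Step 1, $A_{(G',b')}$ is self-adjoint almost surely by Step 2, because $A_{G'}$ is. Lemma~\ref{Lemmaspectralconvergence} then gives
\[
\mathbb{E}\nu_{(G_n,o_n,b_n)}\to\mathbb{E}\nu_{(G',o',b')}=\mathbb{E}\nu_{(G',o')}=\mathbb{E}\nu_{(G,o)},
\]
where the middle equality is Step 2 applied pointwise, and the last equality holds because the laws agree and the map $(T,o)\mapsto\nu_{(T,o)}$ is a measurable function of the rooted graph. The limit does not depend on the subsequence, and every subsequence has such a further subsequence, so the whole sequence of expected spectral measures converges weakly to $\mathbb{E}\nu_{(G,o)}$.

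The main obstacle is the measure-theoretic care in Step 1. One needs the tightness transfer to $\mathcal{G}^{\pm}_\bullet$, and measurability of the spectral measure map, which is needed to write $\mathbb{E}\nu$ at all. The operator-theoretic gauge argument in Step 2 is routine once one checks that conjugation by $S$ commutes with taking closures.
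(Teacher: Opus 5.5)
Your proof is correct, but it is organized differently from the paper's.

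The paper never passes to subsequences and never looks at limiting signs. It applies the gauge to the \emph{prelimit} graphs. It replaces $b_n$ by a similar assignment $b_n'$ that equals $+1$ on the largest ball $B_r(G_n,o_n)$ that is a tree. Conjugation by the diagonal unitary changes neither $\nu_{(G_n,o_n,b_n)}$ nor self-adjointness. Since the limit is a tree, for each fixed $R$ the probability that $B_R(G_n,o_n)$ is a tree tends to $1$. Hence $(G_n,o_n,b_n')$ converges directly to $(G,o,\mathbbm{1})$. Lemma~\ref{Lemmaspectralconvergence} then applies at once, with self-adjointness of the limit supplied by the hypothesis on $A_G$.

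You instead gauge on the \emph{limit}. This costs you three extra steps. First, tightness in $\mathcal{G}^{\pm}_\bullet$; it does lift, as you say, because preimages of compact sets under the forgetful map are compact. Second, Prokhorov's theorem together with the subsequence principle. Third, a separate check that the subsequential limit $A_{(G',b')}$ is self-adjoint, which you handle via the gauge.

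The paper's route is shorter and more elementary, with no compactness machinery. Your route cleanly separates the two ingredients: convergence of the signed graphs, and sign-invariance of spectral measures on trees. It also proves slightly more, namely that every subsequential signed limit is gauge-equivalent to the unsigned tree.
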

\begin{proof}
    Let $(G,o)$ be a rooted graph and let $b,b':E(G)\to\{-1,+1\}$ be two assignments of signs. We say that $b$ and $b'$ are similar if there is a map $\Psi:V(G)\to \{-1,+1\}$ such that 
    $b'(uv)=\Psi(u)\Psi(v)b(uv)$ for all $uv\in E(G)$. Let $U$ be the bounded linear operator on $\ell^2(V(G))$ for which
    $U\chi_v=\Psi(v)\chi_v$ for all $v\in V(G)$. Note that $U$ is unitary, and $A_{(G,b')}=U^*A_{(G,b)}U$. Thus, for any bounded Borel function $f:\mathbb{R}\to\mathbb{R}$, we have
    \begin{multline*}
        \langle f(A_{(G,b')})\chi_o,\chi_o\rangle=\langle U^*f(A_{(G,b)})U\chi_o,\chi_o\rangle=\langle f(A_{(G,b)})U\chi_o,U\chi_o\rangle\\=\langle f(A_{(G,b)})\Psi(o)\chi_o,\Psi(o)\chi_o\rangle=\langle f(A_{(G,b)})\chi_o,\chi_o\rangle=\int f(t) \,\nu_{(G,o,b)}(dt).
        \end{multline*}
    This shows that $(G,o,b)$ and $(G,o,b')$ have the same spectral measure.

    Next, we replace $b_n$ with a similar assignment of signs $b_n'$ as follows. Let $r$ be the largest (random) radius such that $B_r(G_n,o_n)$ is a tree, allowing $r=\infty$ and taking $B_\infty(G_n,o_n)$ to be the component of $o_n$. Since $B_r(G_n,o_n)$ is a tree it is easy to see that one can find an assignment of signs $b_n'$ similar to $b_n$ such that $b_n'(e)=1$ for each edge $e$ of $B_r(G_n,o_n)$. Since the limiting rooted graph $(G,o)$ is a tree, it follows that $(G_n,o_n,b_n')$ converges to $(G,o,\mathbbm{1})$, where $\mathbbm{1}$ is the constant $1$ assignment. Then the statement follows from Lemma~\ref{Lemmaspectralconvergence}. 
\end{proof}

Given an $n\times n$ symmetric matrix $M$ with eigenvalues $\lambda_1,\lambda_2,\dots,\lambda_n$, the empirical spectral measure of $M$ is defined as
\[\eta_M=\frac{1}n\sum_{i=1}^n \delta_{\lambda_i},\]
where $\delta_{\lambda_i}$ is the Dirac measure on $\lambda_i$.

Now let $(G,b)$ be a finite bipartite signed graph with color classes $V$ and $U$. The biadjacency matrix $B$ of $(G,b)$ is a matrix where the rows are indexed by $V$ and the columns are indexed by $U$, and
\[B_{vu}=\begin{cases} b(uv)&\text{ if $uv$ is an edge,}\\0&\text{ otherwise.}\end{cases}\]

\begin{lemma}\label{emiprical}
    Let $o$ be a uniform random vertex in $V$. Then the empirical spectral measure $\eta$ of $BB^T$ can be obtained as the push forward of the measure $\mathbb{E}\nu_{(G,o,b)}$ by the map $t\mapsto t^2$. 
\end{lemma}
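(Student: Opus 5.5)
The plan is to compare moments: show that for every integer $s\ge 0$ the two measures have the same $2s$-th and odd moments after the change of variables $t\mapsto t^2$. Both the empirical spectral measure $\eta$ of $BB^T$ and the pushforward of $\mathbb{E}\nu_{(G,o,b)}$ are finitely supported, so they are determined by their moments. Equivalently, I will check that $\int f(t^2)\,\mathbb{E}\nu_{(G,o,b)}(dt)=\int f\,d\eta$ for every polynomial $f$, or directly for every bounded Borel $f$.

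First I fix the adjacency operator. Write $A$ for the adjacency matrix of the finite signed bipartite graph, ordering $V$ before $U$, so that
\[A=\begin{pmatrix}0&B\\ B^T&0\end{pmatrix},\qquad A^2=\begin{pmatrix}BB^T&0\\ 0&B^TB\end{pmatrix}.\]
Since $G$ is finite, $A$ is a real symmetric matrix and hence self-adjoint, so $\nu_{(G,o,b)}$ is well defined. For a bounded Borel $f$, set $g(t)=f(t^2)$. Then $g(A)=f(A^2)$ by the functional calculus (diagonalize $A$). Because $A^2$ is block diagonal, $f(A^2)$ is block diagonal with upper-left block $f(BB^T)$.

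Next I evaluate the spectral integral. For $o\in V$,
\[\int f(t^2)\,\nu_{(G,o,b)}(dt)=\langle f(A^2)\chi_o,\chi_o\rangle=f(BB^T)_{oo}.\]
The component issue is harmless: restricting to the component $G_o$ gives the same diagonal entry, since $A$ is block diagonal over components. Averaging over a uniform $o\in V$ gives
\[\frac1n\operatorname{Tr} f(BB^T)=\frac1n\sum_{i=1}^n f(\lambda_i)=\int f\,d\eta,\]
where $\lambda_1,\dots,\lambda_n$ are the eigenvalues of $BB^T$. The left-hand side is exactly the integral of $f$ against the pushforward of $\mathbb{E}\nu_{(G,o,b)}$ under $t\mapsto t^2$.

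No step here is a real obstacle. The only point needing care is the identity $g(A)=f(A^2)$ together with the block structure of $A^2$, which is immediate from diagonalizing the finite symmetric matrix $A$.
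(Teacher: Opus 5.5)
Your proposal is correct and follows the paper's proof essentially verbatim: you write $A$ in block form, note that $f(A^2)$ has upper-left block $f(BB^T)$, and average $\langle f(A^2)\chi_o,\chi_o\rangle$ over a uniform $o\in V$ to get $\frac{1}{|V|}\Tr f(BB^T)=\int f\,d\eta$ for every bounded Borel $f$. The opening moment-matching remark is unnecessary, since you verify the identity directly for all bounded Borel $f$ as the paper does. Your note on restricting to the component $G_o$ correctly covers the paper's abuse of notation.
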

\begin{proof}
    The adjacency matrix (operator) of $(G,b)$ is of the form
    \[A=\begin{pmatrix}
        0 &B\\
        B^T&0
    \end{pmatrix},\]
    where the first rows are indexed by $V$ and the last rows are indexed by $U$. Then
    \[A^2=\begin{pmatrix}
        BB^T&0\\
        0&B^TB
    \end{pmatrix}.\]
    Let $\eta'$ be the pushforward of $\mathbb{E}\nu_{(G,o,b)}$ by the map $t\mapsto t^2$. Let $f:\mathbb{R}\to\mathbb{R}$ be any bounded Borel function. Then
    \[\int f(t) \eta(dt)=\frac{1}{|V|}\Tr(f(BB^T))=\mathbb{E}\langle f(A^2)\chi_o,\chi_o\rangle=\mathbb{E}\int f(t^2)\,\nu_{(G,o,b)}(dt)=\int f(t)\,\eta'(dt).\]
    Thus, the statement follows.
\end{proof}

\subsection{The local weak limit of determinantal processes}\label{seclocallimit}

Given a subset $\mathcal{T}_n$ of $U_n$, let $\mathfrak{G}_n[\mathcal{T}_n]$ be the subgraph of $\mathfrak{G}_n$ induced by $V_n\cup\mathcal{T}_n$. 

Let $\mathcal{T}_n$ be drawn from the measure $\mathbb{P}^{H_n}$. The results of Nachmias and the second author~\cite{nachmias2025local} identified the quenched local weak limit of $\mathfrak{G}_n[\mathcal{T}_n]$. Since this is a  quenched convergence, it requires considering probability measures on probability measures, which, although it can be done, might be confusing. Thus, here, relying on Skorokhod's representation theorem, we rephrase the quenched convergence in terms of a coupling. See also~\cite[Proof of Lemma 13]{meszaros2026homology} for how to get the coupling formulation from the quenched convergence. 

\begin{theorem}\label{thmNachmiasPeled}
    There is a coupling of $(\mathcal{T}_n)_{n\geq1}$, with $\mathcal{T}_n\sim\mathbb{P}^{H_n}$, such that the following holds almost surely. For the realized sequence $(\mathcal{T}_n)$, the distribution of $(\mathfrak{G}_n[\mathcal{T}_n],o_n)$, where $o_n$ is chosen uniformly from $V_n$, converges weakly to the law of a random rooted tree $(\mathbb{T}_k,o)$. The limiting law depends only on $k$. 
\end{theorem}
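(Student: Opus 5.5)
Since the limit is deterministic, quenched convergence amounts to convergence in probability, and a single quantile coupling turns this into almost sure convergence. Concretely, I would deduce the statement from the quenched local convergence theorem of~\cite{nachmias2025local} via this elementary coupling argument, in the spirit of Skorokhod's representation theorem.

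\textbf{Setup.} Let $\mathcal{P}(\mathcal{G}_\bullet)$ be the space of Borel probability measures on $\mathcal{G}_\bullet$ with the weak topology. Since $\mathcal{G}_\bullet$ is a Polish space, $\mathcal{P}(\mathcal{G}_\bullet)$ is Polish as well, metrized for instance by the L\'evy--Prokhorov metric $\pi$. For a fixed subset $\mathcal{T}\subset U_n$, let $\mu_n(\mathcal{T})\in\mathcal{P}(\mathcal{G}_\bullet)$ be the law of $(\mathfrak{G}_n[\mathcal{T}],o_n)$ with $o_n$ uniform in $V_n$. Let $\mu$ be the law of $(\mathbb{T}_k,o)$. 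The quenched statement of~\cite{nachmias2025local} says that if $\mathcal{T}_n\sim\mathbb{P}^{H_n}$, then the random measure $\mu_n(\mathcal{T}_n)$ converges in distribution to the deterministic measure $\mu$. Because the limit is a constant, this is the same as convergence in probability. So the random variables $D_n=\pi(\mu_n(\mathcal{T}_n),\mu)$ satisfy $\mathbb{P}(D_n>\varepsilon)\to0$ for every $\varepsilon>0$.

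\textbf{Quantile coupling.} Each $\mathcal{T}_n$ takes only finitely many values. Fix a single uniform random variable $\xi\in[0,1]$. For each $n$, list the subsets $\mathcal{T}\subset U_n$ in nondecreasing order of $\pi(\mu_n(\mathcal{T}),\mu)$, breaking ties arbitrarily. Partition $[0,1)$ into consecutive intervals in this order, with lengths $\mathbb{P}^{H_n}(\mathcal{T})$, and let $\mathcal{T}_n$ be the set whose interval contains $\xi$. Then $\mathcal{T}_n\sim\mathbb{P}^{H_n}$ for each $n$, and $D_n=q_n(\xi)$, where $q_n$ is the (left-continuous) quantile function of the law of $D_n$.

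\textbf{Almost sure convergence.} For any $u<1$ and $\varepsilon>0$, we have $\mathbb{P}(D_n>\varepsilon)<1-u$ for all large $n$, and hence $q_n(u)\le\varepsilon$. Therefore $q_n(u)\to0$ for every $u<1$. Since $\xi<1$ almost surely, we get $D_n\to0$ almost surely. This means that, almost surely, the law of $(\mathfrak{G}_n[\mathcal{T}_n],o_n)$ for the realized sequence converges weakly to $\mu$, as claimed.

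\textbf{Main obstacle.} The only delicate step is the first one: correctly matching the form of the quenched result in~\cite{nachmias2025local}, namely convergence in distribution of random measures in $\mathcal{P}(\mathcal{G}_\bullet)$, possibly stated on the signed or bipartite-rooted version. One also has to check that the limit there is indeed the deterministic law of $(\mathbb{T}_k,o)$ depending only on $k$. Once that is in place, the coupling itself is routine, and it avoids invoking the general Skorokhod theorem together with a conditional resampling of $\mathcal{T}_n$ given $\mu_n(\mathcal{T}_n)$.
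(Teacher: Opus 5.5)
Your proposal is correct and follows the same route as the paper. The paper gives no separate argument: it invokes the quenched local limit of \cite{nachmias2025local} and converts it into an almost sure statement under a coupling via Skorokhod's representation theorem (see also \cite[Proof of Lemma 13]{meszaros2026homology}). Your quantile coupling of the real-valued distance $D_n$ is a concrete one-dimensional instance of that step, made possible because the limit is deterministic and each $\mathcal{T}_n$ takes finitely many values, and it avoids resampling $\mathcal{T}_n$ conditionally on $\mu_n(\mathcal{T}_n)$.
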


\begin{lemma}\label{almostsurleyselfadjoint}
The adjacency operator of $\mathbb{T}_k$ is almost surely self-adjoint.
\end{lemma}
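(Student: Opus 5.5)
We will deduce self-adjointness from a degree-growth criterion for trees. The tool is the standard sufficient condition for essential self-adjointness of adjacency operators on locally finite trees; a convenient form is the one in~\cite[Section~2]{bordenave2011rank}. It says that if a rooted tree can be exhausted by a sequence of finite subtrees $S_1\subset S_2\subset\cdots$ whose \emph{boundary vertices} all have degree at most some bound $M$ (the same $M$ for every $S_j$), then the adjacency operator is essentially self-adjoint. A simpler special case also suffices: a locally finite tree with uniformly bounded degree has a bounded, hence self-adjoint, adjacency operator. Since $\mathbb{T}_k$ is typically not of bounded degree, the plan is to verify the exhaustion criterion almost surely.

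First we would recall the structure of $\mathbb{T}_k$ from~\cite{nachmias2025local}, which parallels the Poisson(1) Galton--Watson tree conditioned to survive. $\mathbb{T}_k$ is a bipartite tree with $V$-type and $U$-type vertices. Every $U$-type vertex has degree exactly $k+1$, because each $u\in U$ has degree $k+1$ in $\mathfrak{G}_n$ and all its neighbours lie in $V$. The degree of a $V$-type vertex is $1$ plus a Poisson-type number of additional $U$-children. Along a distinguished spine, the offspring law is size-biased but still has exponential tails.

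Off the spine, the $V$-types have independent offspring counts with a fixed light-tailed law, each $U$-child has exactly $k$ further $V$-children, and so on. This is a multitype Galton--Watson structure. We would check that it is subcritical or critical off the spine, so that the hanging subtrees are almost surely finite, as they are in the $k=1$ case by Grimmett's description.

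Given this structure, we would build the exhaustion as follows. For each $j$, let $S_j$ be the union of the first $j$ spine vertices together with all the finite bushes hanging from them. Its boundary, meaning the vertices of $S_j$ adjacent to vertices outside $S_j$, consists only of the $j$-th spine vertex. We would cut at a $U$-type spine vertex, whose degree is exactly $k+1$. Hence the boundary degrees are bounded by $M=k+1$ for every $j$, and the criterion gives essential self-adjointness almost surely.

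If the structure of $\mathbb{T}_k$ instead has infinitely many infinite branches, we would fall back on the general criterion for unimodular trees: a unimodular random tree with finite expected root degree has an almost surely self-adjoint adjacency operator~\cite[Proposition~2.2]{bordenave2011rank}. We would apply it by noting that $\mathbb{T}_k$ is a weak limit of uniformly rooted finite graphs, namely the graphs $\mathfrak{G}_n[\mathcal{T}_n]$ re-rooted uniformly over all vertices, hence unimodular, and that its root degree has finite mean. The main obstacle is this unimodularity step, because our root is chosen uniformly only from the class $V$. We would handle it by passing to the uniformly rooted version of the bipartite graph, which is legitimate because $|U\cap\mathcal{T}_n|/|V|$ converges to $1/k$, and by using that self-adjointness does not depend on the choice of root.
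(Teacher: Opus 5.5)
Your main route does not work for $k\ge 2$, which is the case that matters; the spine-plus-finite-bushes picture is correct only for $k=1$ (Grimmett's description), where your argument is fine. Since $\mathbb{T}_k$ is also the local limit of the $1$-out graphs, you can read off its shape there. The root $o$ has its chosen $U$-neighbour $R(o)$, which has $k$ further $V$-neighbours. Each of these has its own chosen $U$-neighbour with $k$ further $V$-neighbours, and so on. So $\mathbb{T}_k$ already contains a $k$-ary tree below $o$.

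The same thing happens inside every ``bush''. A $U$-vertex hanging off your spine has $k$ further $V$-children, and at least $k-1\ge 1$ of them did not choose it, so each carries its own chosen $U$-vertex as a further child. The off-spine process is therefore supercritical, the bushes are infinite, and your sets $S_j$ are not finite.

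Your fallback might be salvageable, but as written it is not a proof. First, $|\mathcal{T}_n|/|V_n|=r/n\to 1$, not $1/k$. More importantly, convergence of this ratio does not give local convergence of $\mathfrak{G}_n[\mathcal{T}_n]$ rooted uniformly on $V_n\cup\mathcal{T}_n$. You would have to write the $U$-rooted law as a degree-biased shift of the $V$-rooted one and prove uniform integrability of the root degree. You would then still need a self-adjointness theorem for general unimodular trees with finite mean degree, and you should check that it is really available in the form you quote.

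The missing observation is that no structural information about $\mathbb{T}_k$ beyond bipartiteness is needed. Your own remark about cutting at $U$-type vertices already contains the proof. Every $u\in\mathcal{T}_n$ keeps all $k+1$ of its $V$-neighbours in $\mathfrak{G}_n[\mathcal{T}_n]$, so in the limit every vertex at odd distance from the root has degree exactly $k+1$.

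Take $S_j$ to be the ball of radius $2j+1$ around $o$. These sets are finite (by local finiteness), connected and increasing, and they exhaust $\mathbb{T}_k$. Any vertex of $S_j$ with a neighbour outside $S_j$ lies at distance exactly $2j+1$ from the $V$-type root, so it is $U$-type and has degree $k+1$. The exhaustion criterion you quote therefore applies to every realization with $M=k+1$, with no probabilistic input at all.

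The paper's proof only refers to a slight modification of \cite[Claim 3.3]{linial2016phase}. The odd-radius ball exhaustion is a short self-contained way to carry this out, and it avoids both the spine structure and unimodularity.
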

\begin{proof}
    The proof of \cite[Claim 3.3.]{linial2016phase} can be slightly modified to give this statement.
\end{proof}

\begin{lemma}\label{NachmiasPeledSpectral}
Let $\eta_{\mathcal{T}_n}$ be the empirical spectral measure of $B_n[\mathcal{T}_n](B_n[\mathcal{T}_n])^T$, and let $\eta_k$ be the pushforward of $\mathbb{E}\nu_{(\mathbb{T}_k,o)}$ by the map $t\mapsto t^2$. Under the coupling provided by Theorem~\ref{thmNachmiasPeled}, the measures $\eta_{\mathcal{T}_n}$ converge weakly to~$\eta_k$ almost surely.   
\end{lemma}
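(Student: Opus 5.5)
The plan is to chain Lemma~\ref{emiprical}, Lemma~\ref{treelimit} and Lemma~\ref{almostsurleyselfadjoint}, working on the almost sure event from Theorem~\ref{thmNachmiasPeled} and for a fixed realized sequence $(\mathcal{T}_n)$.

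First, view $\mathfrak{G}_n[\mathcal{T}_n]$ as a finite bipartite signed graph. Its color classes are $V_n$ and $\mathcal{T}_n$, and its signs are inherited from $B_n$. The biadjacency matrix of this signed graph is exactly $B_n[\mathcal{T}_n]$, the submatrix of $B_n$ with columns restricted to $\mathcal{T}_n$. By Lemma~\ref{emiprical}, with $o_n$ uniform in $V_n$, the measure $\eta_{\mathcal{T}_n}$ is the pushforward of $\mathbb{E}\nu_{(\mathfrak{G}_n[\mathcal{T}_n],o_n,b_n)}$ under $t\mapsto t^2$. Here the expectation is over $o_n$ only, since $\mathcal{T}_n$ is fixed. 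For finite graphs the adjacency operator is a symmetric matrix, hence self-adjoint. If one insists on using the connected component of $o_n$ as in the rooted-graph convention, this changes nothing: $\langle f(A)\chi_o,\chi_o\rangle$ depends only on the component of $o$.

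Second, by Theorem~\ref{thmNachmiasPeled}, the random rooted graphs $(\mathfrak{G}_n[\mathcal{T}_n],o_n)$, with signs forgotten, converge weakly to $(\mathbb{T}_k,o)$. This limit is a tree almost surely, and its adjacency operator is self-adjoint almost surely by Lemma~\ref{almostsurleyselfadjoint}. Lemma~\ref{treelimit} therefore applies and gives weak convergence of $\mathbb{E}\nu_{(\mathfrak{G}_n[\mathcal{T}_n],o_n,b_n)}$ to $\mathbb{E}\nu_{(\mathbb{T}_k,o)}$.

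Third, pushforward under the continuous map $t\mapsto t^2$ preserves weak convergence. If $f$ is bounded and continuous, then so is $f(t^2)$. Hence $\eta_{\mathcal{T}_n}\to\eta_k$ weakly, and since this holds on an almost sure event, the convergence is almost sure.

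The only delicate point is bookkeeping rather than analysis. Lemma~\ref{treelimit} is stated for random signed rooted graphs, so it must be applied conditionally on the realization of $(\mathcal{T}_n)$, with the randomness coming only from $o_n$. The almost sure quantifier has to be placed outside this application.
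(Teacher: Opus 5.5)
Your proposal is correct and matches the paper, whose proof simply combines Lemma~\ref{treelimit}, Lemma~\ref{emiprical} and Lemma~\ref{almostsurleyselfadjoint}. Your remarks on applying Lemma~\ref{treelimit} conditionally on the realized sequence, on the connected-component convention, and on pushing forward under the continuous map $t\mapsto t^2$ only make explicit what the paper leaves implicit.
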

\begin{proof}
    This lemma follows by combining Lemma~\ref{treelimit}, Lemma~\ref{emiprical} and Lemma~\ref{almostsurleyselfadjoint}. 
\end{proof}

Finally, we can define the constant $c_k$ that appears in Theorem~\ref{thm1}. 

\begin{definition}\label{ckdef}
    Let $\eta_k$ be the pushforward of $\mathbb{E}\nu_{(\mathbb{T}_k,o)}$ by the map $t\mapsto t^2$, and let
    \[c_k=\frac{1}2\int \log(t) \eta_k(dt).\]
\end{definition}

We included the factor $\frac{1}2$ in the definition of $c_k$ to be consistent with~\cite{meszaros2026homology}.
\begin{lemma}\label{etak0}
    We have $\eta_k(\{0\})=0$.
\end{lemma}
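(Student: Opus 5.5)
The plan is to pass the atom at $0$ to the limit from the finite matrices $M_n=B_n[\mathcal{T}_n](B_n[\mathcal{T}_n])^T$, via Lemma~\ref{NachmiasPeledSpectral}, using a uniform bound on how much mass these matrices put near $0$. Weak convergence only gives $\eta_k(O)\le\liminf_n\eta_{\mathcal{T}_n}(O)$ for \emph{open} sets $O$, so it is not enough to control the atom $\eta_{\mathcal{T}_n}(\{0\})$. I will bound $\eta_{\mathcal{T}_n}([0,\varepsilon))$ uniformly in $n$, up to a vanishing error. Then
\[
\eta_k(\{0\})\le\eta_k((-\varepsilon,\varepsilon))\le\liminf_n\eta_{\mathcal{T}_n}((-\varepsilon,\varepsilon)),
\]
and letting $\varepsilon\to0$ finishes the proof. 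I fix one realization of the coupling in Theorem~\ref{thmNachmiasPeled} on which Lemma~\ref{NachmiasPeledSpectral} holds.

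\emph{Step 1: zero eigenvalues.} Since $\mathcal{T}_n$ indexes a basis of the column space of $B_n$, the matrix $B_n[\mathcal{T}_n]$ has rank $r=\operatorname{rank}B_n$. Hence $\eta_{\mathcal{T}_n}(\{0\})=(n-r)/n$, and I need $r/n\to1$. Use the trace inequality $\operatorname{rank}(S)\ge(\Tr S)^2/\Tr(S^2)$ for the positive semidefinite matrix $S=B_nB_n^T$.
\begin{itemize}
\item Each row of $B_n$ has $d$ entries $\pm1$, so $\Tr S=nd$.
\item The diagonal of $S$ contributes $nd^2$ to $\Tr(S^2)=\sum_{v,w}S_{vw}^2$.
\item Two distinct $v,w\in V$ share at most one neighbor in $U$ by $C_4$-freeness. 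So each off-diagonal $S_{vw}$ lies in $\{0,\pm1\}$, and there are exactly $ndk$ ordered pairs with $S_{vw}\neq0$.
\end{itemize}
Thus $\Tr(S^2)=nd^2+ndk$, and $r\ge n/(1+k/d)$. This gives $(n-r)/n\le k/(d+k)\to0$.

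\emph{Step 2: small nonzero eigenvalues (Lück-type bound).} The nonzero eigenvalues of $M_n$ coincide with the eigenvalues $\lambda_1,\dots,\lambda_r$ of the $r\times r$ matrix $N=B_n[\mathcal{T}_n]^TB_n[\mathcal{T}_n]$.
\begin{itemize}
\item $N$ is an integer matrix of full rank, so $\det N=\prod_i\lambda_i\ge1$.
\item Each column of $B_n[\mathcal{T}_n]$ has exactly $k+1$ nonzero entries $\pm1$, so $\Tr N=r(k+1)$.
\end{itemize}
Let $m_\varepsilon$ be the number of $\lambda_i<\varepsilon$. Applying AM--GM to the remaining eigenvalues gives
\[
0\le\log\det N\le m_\varepsilon\log\varepsilon+(r-m_\varepsilon)\log\frac{r(k+1)}{r-m_\varepsilon}.
\]
Since $x\log(1/x)\le 1/e$, this yields $m_\varepsilon/r\le C_k/\log(1/\varepsilon)$ with $C_k$ depending only on $k$. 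Combining with Step 1,
\[
\eta_{\mathcal{T}_n}([0,\varepsilon))\le\frac{k}{d+k}+\frac{C_k}{\log(1/\varepsilon)}.
\]

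\emph{Step 3: limit.} The measures $\eta_{\mathcal{T}_n}$ are supported on $[0,\infty)$, so $\eta_{\mathcal{T}_n}((-\varepsilon,\varepsilon))=\eta_{\mathcal{T}_n}([0,\varepsilon))$. Taking $\liminf_n$ and using $d\to\infty$ gives $\eta_k(\{0\})\le C_k/\log(1/\varepsilon)$ for every $\varepsilon\in(0,1)$, and hence $\eta_k(\{0\})=0$.

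The main obstacle is Step 2. The naive approach fails because the weak limit can gain an atom at $0$ from eigenvalues of $M_n$ drifting to zero. The integrality of $N$, which forces $\det N\ge1$, together with the trace bound $\Tr N=r(k+1)$ that does not grow with $d$, is what rules this out. If the AM--GM bookkeeping turns out delicate, I would instead bound $\log\lambda_i\le\log\|N\|\le\log(k+1)^2$, since column degree $k+1$ bounds the row sums of $|N|$ by roughly $(k+1)^2$ in the $C_4$-free setting. That again gives $m_\varepsilon\log(1/\varepsilon)\le 2r\log(k+1)$.
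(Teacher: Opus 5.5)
Your proof is correct. It does not follow the paper's route, because the paper gives no internal proof of this lemma: it only cites \cite[Lemma 17]{meszaros2026homology} or \cite[Lemma 4.2]{linial2019enumeration}.

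You instead derive the statement from the finite approximations, using only tools already available at this point in the paper:
\begin{itemize}
\item Lemma~\ref{NachmiasPeledSpectral}. Its proof uses Lemmas~\ref{treelimit}, \ref{emiprical} and \ref{almostsurleyselfadjoint} but not Lemma~\ref{etak0}, so there is no circularity.
\item A self-contained and slightly sharper version of the rank estimate \eqref{ranklognormalization}. You get $(n-r)/n\le k/(d+k)$ from the trace--rank inequality together with $C_4$-freeness.
\item A Lück-type determinant argument. Integrality gives $\det N\ge 1$, and the trace identity $\Tr N=r(k+1)$ takes the place of the usual operator-norm bound.
\end{itemize}
The bookkeeping checks out. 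With $x=(r-m_\varepsilon)/r$ you get $m_\varepsilon\log(1/\varepsilon)\le r\bigl(x\log(k+1)+x\log(1/x)\bigr)$, so $C_k=\log(k+1)+1/e$ works. Passing to the limit through the open set $(-\varepsilon,\varepsilon)$ is the right use of the Portmanteau theorem.

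What your route buys is self-containedness and a quantitative statement, $\eta_k([0,\varepsilon))\le C_k/\log(1/\varepsilon)$. It uses the same integrality-plus-trace mechanism that drives Lemma~\ref{lemmauniformint}. What it costs is that a property of the limiting tree $\mathbb{T}_k$ is obtained through an approximating sequence. It therefore leans on the Nachmias--Peled local limit theorem (Theorem~\ref{thmNachmiasPeled}), and on the existence of an admissible sequence for the given $k$, for example the complete $k$-dimensional simplicial complex. The paper instead imports the fact about $\mathbb{T}_k$ from the literature.

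One caution about the fallback in your last paragraph: it is wrong, though nothing in the main argument depends on it. The bound $\|N\|\le (k+1)^2$ fails, because a vertex $v\in V$ may have many neighbours in $\mathcal{T}_n$. Since $N$ and $M_n$ share their nonzero eigenvalues, $\|N\|=\|M_n\|\ge \max_v (M_n)_{vv}=\max_v \deg_{\mathfrak{G}_n[\mathcal{T}_n]}(v)$, and this need not be bounded by any function of $k$. Row sums of $|N|$ are only bounded by $(k+1)\max_v\deg_{\mathfrak{G}_n[\mathcal{T}_n]}(v)$. The trace bound is genuinely needed, so your AM--GM argument is the one to keep.
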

\begin{proof}
    See \cite[Lemma 17]{meszaros2026homology} or \cite[Lemma 4.2]{linial2019enumeration}.
\end{proof}

Note that it was proved in \cite{meszaros2026homology} that

\[\frac{1}2\log\left(\frac{k+1}e\right)\le c_k\le \frac{1}2\log\left({k+1}\right).\]
\subsection{The matching measure}

Let $G$ be a finite graph. The matching polynomial of $G$ is defined as
\[\mu(G,x)=\sum_k (-1)^k m_k(G) x^{|V(G)|-2k},\]
where $m_k(G)$ is the number of matchings in $G$ with exactly $k$ edges. This is a real rooted polynomial as Heilmann and Lieb proved~\cite{heilmann1972theory}. The matching measure $\rho_G$ of $G$ is defined as the uniform measure on the roots of $\mu(G,x)$, counted with multiplicity. This measure was introduced by Ab\'ert, Csikv\'ari, Frenkel and Kun~\cite{abert2016matchings}  as a
tool to understand matchings of locally convergent graph sequences, see also~\cite{abert2015matching,csikvari2016matchings,csikvari2017lower,bencs2022atoms}.

Let $(G,o)$ be a rooted graph. Let $T_o(G)$ be the path tree of $o$, that is, the vertices of $T_o(G)$ are the (self-avoiding) paths starting from $o$, and two vertices of $T_o(G)$ are connected if one of the corresponding paths can be obtained from the other by extending it with a vertex. Let $\bar{o}$ be the path consisting of the single vertex $o$. Then ${(T_o(G),\bar{o})}$ is a rooted tree. Let $\nu_{(T_o(G),\bar{o})}$ be the spectral measure of $(T_o(G),\bar{o})$ as defined in Section~\ref{SecSpectralMeasure}.

Then the matching measure can be disintegrated as
\[\rho_G=\mathbb{E}\nu_{(T_{o}(G),\bar{o})},\]
where $o$ is a uniform random vertex of $G$, see \cite[Theorem 2.2.]{abert2015matching}.

When $G$ is bipartite with color classes $V$ and $U$, we would like a disintegration that only uses vertices from~$V$.

\begin{lemma}\label{lemmadisintegrate}
    Assume that $|V|\ge |U|$. Then
    \[\rho_G=\frac{2|V|}{|U|+|V|}\left(\mathbb{E} \nu_{(T_o(G),\bar{o})}-\frac{|V|-|U|}{2|V|}\delta_0\right)\]
    where $o$ is a uniform random vertex of $V$.
\end{lemma}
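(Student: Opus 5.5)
Start from the disintegration $\rho_G=\mathbb{E}_{o\in V(G)}\nu_{(T_o(G),\bar o)}$ over a uniform vertex of all of $V(G)=V\cup U$, quoted above from \cite[Theorem 2.2]{abert2015matching}. Write
\[
\rho_G=\frac{|V|}{|V|+|U|}\,\mathbb{E}_{o\in V}\nu_{(T_o(G),\bar o)}+\frac{|U|}{|V|+|U|}\,\mathbb{E}_{o\in U}\nu_{(T_o(G),\bar o)}.
\]
The claimed identity is equivalent to
\[
\mathbb{E}_{o\in U}\nu_{(T_o(G),\bar o)}=\frac{|V|}{|U|}\,\mathbb{E}_{o\in V}\nu_{(T_o(G),\bar o)}-\frac{|V|-|U|}{|U|}\,\delta_0,
\]
so it suffices to compare the two one-sided averages.

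To compare them, use the matching-polynomial interpretation of the spectral measure of the path tree. By Godsil's identity, $\langle (x-A_{T_o(G)})^{-1}\chi_{\bar o},\chi_{\bar o}\rangle=\mu(G-o,x)/\mu(G,x)$. Hence
\[
\sum_{o\in V(G)}\langle (x-A_{T_o})^{-1}\chi_{\bar o},\chi_{\bar o}\rangle=\frac{\mu'(G,x)}{\mu(G,x)}
\]
is the Stieltjes transform of $|V(G)|\rho_G$. The real task is to compute $S_V(x)=\sum_{o\in V}\mu(G-o,x)/\mu(G,x)$ and $S_U$ separately.

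For this, introduce the two-variable matching polynomial in which vertices of $V$ left uncovered get weight $y$ and vertices of $U$ get weight $z$:
\[
\mu(G;y,z)=\sum_M(-1)^{|M|}\,y^{|V|-|M|}z^{|U|-|M|}.
\]
Then $\partial_y\mu(G;y,z)=\sum_{o\in V}\mu(G-o;y,z)$, and similarly for $z$. Since every matching covers equally many vertices of each class, $\mu(G;y,z)=z^{|U|-|V|}f(yz)$ for a single-variable function $f$. This gives
\[
y\,\partial_y\mu-z\,\partial_z\mu=(|V|-|U|)\,\mu.
\]
Setting $y=z=x$, and using that $\mu(G-o;x,x)=\mu(G-o,x)$, we get
\[
x\,S_V(x)-x\,S_U(x)=|V|-|U|.
\]

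It remains to translate this back into measures. Writing $\nu_V=\mathbb{E}_{o\in V}\nu_{(T_o(G),\bar o)}$ and $\nu_U$ analogously, the last identity reads
\[
|V|\int\frac{x}{x-t}\,\nu_V(dt)-|U|\int\frac{x}{x-t}\,\nu_U(dt)=|V|-|U|.
\]
Since $\frac{x}{x-t}=1+\frac{t}{x-t}$, the constant terms cancel exactly. This leaves
\[
\int\frac{t}{x-t}\,\bigl(|V|\nu_V-|U|\nu_U\bigr)(dt)=0 \quad\text{for all } x\notin\mathbb{R}.
\]
By Stieltjes inversion, $t\,(|V|\nu_V-|U|\nu_U)=0$, so $|V|\nu_V-|U|\nu_U$ is supported on $\{0\}$. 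Its total mass is $|V|-|U|$, so
\[
|V|\nu_V-|U|\nu_U=(|V|-|U|)\,\delta_0.
\]
Substituting $|U|\nu_U=|V|\nu_V-(|V|-|U|)\delta_0$ into the decomposition gives
\[
\rho_G=\frac{2|V|\nu_V-(|V|-|U|)\delta_0}{|V|+|U|},
\]
which is the claim.

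The main obstacle is justifying Godsil's identity for the path tree, namely that the root spectral measure of $T_o(G)$ has Stieltjes transform $\mu(G-o,x)/\mu(G,x)$. This is standard (Godsil) and is essentially the content of \cite[Theorem 2.2]{abert2015matching}, which I would cite rather than reprove. The assumption $|V|\ge|U|$ only ensures that the right-hand side is a nonnegative measure; the identity itself holds in general.
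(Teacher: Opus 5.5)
Your argument is correct, but it is organized differently from the paper's. The paper does not start from the disintegration over all of $V(G)$. It first treats $|V|=|U|=h$, where the count $(h-k)m_k(G)=\sum_{v\in V}m_k(G,v)$ gives $\mu'(G,z)=2\sum_{v\in V}\mu(G-v,z)$. Hence the Stieltjes transform of $\rho_G$ is $\frac1h\sum_{v\in V}\mu(G-v,z)/\mu(G,z)$, and Godsil's identity (cited there as \cite[Lemma 2.7]{bencs2022atoms}) finishes. The case $|V|>|U|$ is then reduced to the balanced one by adding $|V|-|U|$ isolated vertices to $U$. This does not change the path trees rooted in $V$ and multiplies $\mu$ by $z^{|V|-|U|}$, which produces the atom at $0$.

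You instead compare the $V$- and $U$-averages directly, via the Euler-type identity $y\partial_y\mu-z\partial_z\mu=(|V|-|U|)\mu$ for the bivariate matching polynomial. The combinatorial input is the same: every matching leaves exactly $|V|-|U|$ more uncovered vertices in $V$ than in $U$. Your version treats all side sizes uniformly, with the atom at $0$ coming from the $1/x$ term rather than from padding. The cost is invoking the full disintegration and an extra inversion step; the paper's route is shorter and self-contained given Godsil's identity.

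Your inversion can be streamlined. $S_V-S_U$ is the Stieltjes transform of $|V|\nu_V-|U|\nu_U$, and it equals $(|V|-|U|)/x$, which is the Stieltjes transform of $(|V|-|U|)\delta_0$. All measures involved are finitely supported, since the path trees are finite, so uniqueness gives $|V|\nu_V-|U|\nu_U=(|V|-|U|)\delta_0$ immediately.

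Your closing remark is slightly off. The right-hand side equals $\rho_G$, so it is always a nonnegative measure. The hypothesis $|V|\ge|U|$ is needed only for the paper's padding argument (and it holds in the application), not for positivity.
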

\begin{proof}
    First, we consider the case when $|V|=|U|=h$. In this case we need to prove that
    \[\rho_G=\mathbb{E} \nu_{(T_o(G),\bar{o})}.\]

    We prove this by showing that the two sides have the same Stieltjes-transform. Recall that given a probability measure $\nu$ on $\mathbb{R}$, its  Stieltjes-transform is defined as
    \[S_{\nu}(z)=\int\frac{1}{z-t} \nu(dt)\text{ for }z\in \mathbb{C}\setminus\mathbb{R}.\]

    It is straightforward to see that
    \[S_{\rho_G}(z)=\frac{1}{2h}\frac{\mu'(G,z)}{\mu(G,z)}.\]

    The coefficient of $z^{2h-2k-1}$ in the polynomial $\mu'(G,z)$ is $(-1)^k(2h-2k)m_k(G)$. Observe that $(h-k)m_k(G)$ is exactly the number of pairs $(M,v)$, where $M$ is a matching of size $k$ and $v$ is a vertex in $V$ not covered by $M$. Thus, if $m_k(G,v)$ is the number of matchings of size $k$ not covering the vertex $v$, then $(h-k)m_k(G)=\sum_{v\in V} m_k(G,v)$. Note that $(-1)^k\sum_{v\in V} m_k(G,v)$ is the coefficient of $z^{2h-2k-1}$ in the polynomial $\sum_{v\in V} \mu(G-v,z)$. Thus, 
     
     \[\mu'(G,z)=2\sum_{v\in V} \mu(G-v,z).\]
     Therefore,
     \[S_{\rho_G}(z)=\frac{1}{2h}\frac{\mu'(G,z)}{\mu(G,z)}=\frac{1}h\sum_{v\in V}\frac{\mu(G-v,z)}{\mu(G,z)}.\]

By \cite[Lemma 2.7.]{bencs2022atoms}, we have $S_{\nu_{(T_o(G),\bar{o})}}(z)=\frac{\mu(G-o,z)}{\mu(G,z)}$. Thus, the statement follows in the special case of $|V|=|U|$. 

Now, consider the case when $|V|>|U|$. Let $G'$ be the bipartite graph obtained from $G$ by adding $|V|-|U|$ isolated vertices to $U$. Using the already established case, we have
\[\rho_{G'}=\mathbb{E} \nu_{(T_o(G'),\bar{o})}=\mathbb{E} \nu_{(T_o(G),\bar{o})}.\]
Observing that $\mu(G',z)=z^{|V|-|U|}\mu(G,z)$, the statement follows. 
\end{proof}

\begin{lemma}\label{mathingmeasureconvergence}
    Let $G_n$ be a sequence of finite bipartite graphs with color classes $V_n$ and $U_n$ such that \break $|V_n|\ge |U_n|\ge (1-o(1))|V_n|$. Let $o_n$ be a uniform random vertex of $V_n$. Assume that $(G_n,o_n)$ converges locally to a random rooted tree $(\mathbb{T},o)$, where the adjacency operator of $\mathbb{T}$ is almost surely self-adjoint. Then $\rho_{G_n}$ converges weakly to the expected  spectral measure of $(\mathbb{T},o)$.\end{lemma}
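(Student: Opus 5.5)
The argument runs through Lemma~\ref{lemmadisintegrate} and a local-convergence statement for path trees. By Lemma~\ref{lemmadisintegrate},
\[
\rho_{G_n}=\frac{2|V_n|}{|U_n|+|V_n|}\left(\mathbb{E}\,\nu_{(T_{o_n}(G_n),\bar{o}_n)}-\frac{|V_n|-|U_n|}{2|V_n|}\delta_0\right).
\]
The hypothesis $|V_n|\ge |U_n|\ge (1-o(1))|V_n|$ gives $\frac{2|V_n|}{|U_n|+|V_n|}\to 1$ and $\frac{|V_n|-|U_n|}{2|V_n|}\to0$. Testing against a bounded continuous $f$, the correction term is bounded by $\|f\|_\infty$ times a vanishing coefficient. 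Hence it suffices to show that $\mathbb{E}\,\nu_{(T_{o_n}(G_n),\bar o_n)}$ converges weakly to $\mathbb{E}\,\nu_{(\mathbb{T},o)}$.

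For this, I will show that the random rooted path trees $(T_{o_n}(G_n),\bar o_n)$ converge locally to $(\mathbb{T},o)$, and then apply Lemma~\ref{Lemmaspectralconvergence} with all signs equal to $+1$. The key observation concerns a vertex $o$ whose ball $B_r(G,o)$ is a tree. Self-avoiding paths of length at most $r$ starting at $o$ stay inside $B_r(G,o)$. They correspond bijectively to the vertices of $B_r(G,o)$, via the endpoint map, because in a tree there is exactly one path from $o$ to each vertex. This bijection preserves adjacency, so $B_r(T_o(G),\bar o)\cong B_r(G,o)$ as rooted graphs. Since $\mathbb{T}$ is a tree almost surely, local convergence of $(G_n,o_n)$ gives
\[
\mathbb{P}\bigl(B_r(G_n,o_n)\text{ is a tree}\bigr)\to \mathbb{P}\bigl(B_r(\mathbb{T},o)\text{ is a tree}\bigr)=1,
\]
because being a tree at radius $r$ is a property of the $r$-ball. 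It follows that, for every $r$ and every finite rooted graph $F$, the difference between $\mathbb{P}(B_r(T_{o_n}(G_n),\bar o_n)\cong F)$ and $\mathbb{P}(B_r(G_n,o_n)\cong F)$ tends to $0$. Therefore the path trees have the same local limit $(\mathbb{T},o)$.

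It remains to check the self-adjointness hypotheses of Lemma~\ref{Lemmaspectralconvergence}. The limit is self-adjoint by assumption. Each $T_{o_n}(G_n)$ is a finite tree, since $G_n$ is finite, so its adjacency operator is a symmetric finite matrix and hence self-adjoint. Lemma~\ref{Lemmaspectralconvergence} then yields $\mathbb{E}\,\nu_{(T_{o_n}(G_n),\bar o_n)}\to\mathbb{E}\,\nu_{(\mathbb{T},o)}$ weakly, which completes the argument.

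The main point needing care is the identification of the path-tree neighborhood with the graph neighborhood, that is, checking that the root of the path tree sees exactly $B_r(G,o)$ when that ball is a tree. The rest is bookkeeping of the vanishing coefficients.
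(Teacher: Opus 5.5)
Your proof is correct and follows the paper's argument: you reduce via Lemma~\ref{lemmadisintegrate} to the averaged path-tree spectral measures, show that the path trees converge locally to $(\mathbb{T},o)$, and apply Lemma~\ref{Lemmaspectralconvergence}. The only cosmetic difference is in the middle step: the paper notes that $B_r(T_o(G),\bar o)$ is determined by $B_r(G,o)$ and then uses $T_o(\mathbb{T})\cong\mathbb{T}$, whereas you compare the two balls directly on the high-probability event that $B_r(G_n,o_n)$ is a tree.
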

    \begin{proof}
    Note that given a rooted graph $(G,o)$, we can tell $B_r(T_o(G),\bar{o})$ just based on $B_r(G,o)$. Therefore, $(T_{o_n}(G_n),\bar{o}_n)$ converges weakly to $(T_o(\mathbb{T}),\bar{o})$. Since $\mathbb{T}$ is a tree, we have $(T_o(\mathbb{T}),\bar{o})\cong (\mathbb{T},o)$. Therefore by Lemma~\ref{Lemmaspectralconvergence}, we see that $\mathbb{E}\nu_{(T_{o_n}(G_n),\bar{o}_n)}$\ converges to $\mathbb{E}\nu_{(\mathbb{T},o)}$. By Lemma~\ref{lemmadisintegrate}, it follows that $\rho_{G_n}$ also has the same limit.
\end{proof}

\section{Coupling of uniform spanning trees and $1$-out graphs}\label{SecUSTCoupling}

\subsection{Constructing the coupling -- The proof of Theorem~\ref{thm1dcoupling}}\label{section1dcoupling}

Let $V=V(G)$, and let us fix a vertex $v_0\in V$. Let $\mathcal{F}$ be the set of functions $f:V\setminus \{v_0\}\to V$ such that $f(v)$ is a neighbor of $v$ (in the graph $G$) for all $v\in V\setminus\{v_0\}$. Obviously, $|\mathcal{F}|=\prod_{v\in V\setminus\{v_0\}} \deg(v)$. 

Given $f\in \mathcal{F}$, let $G_f$ be the subgraph of $G$ consisting of the edges $\{vf(v)\,:\,v\in V\setminus \{v_0\}\}$. If $T$ is a spanning tree of $G$, then there is a unique $f\in \mathcal{F}$ such that $G_f=T$. Indeed, let us orient the edges of $T$ towards the vertex $v_0$. Then each vertex other than $v_0$ has out-degree $1$. For a vertex $v\neq v_0$, let us define $f(v)$ as the unique neighbor of $v$ such that the edge $vf(v)$ is an edge of $T$ oriented away from~$v$. It is straightforward to see that $G_f=T$, and no other element of $\mathcal{F}$ has the same property. Let $\mathcal{F}'=\{f\in \mathcal{F}\,:\,G_f\text{ is a spanning tree}\}$. By the discussion above $|\mathcal{F}'|$ is just the number of spanning trees of $G$. Thus, by a result of Kostochka~\cite{kostochka1995number}, we have
\[|\mathcal{F}'|\ge \exp\left(-nO\left(\frac{\log^2 k}{k}\right)\right)\prod_{v\in V}\deg(v).\]

Let $X$ and $Y$ be uniform random elements of $\mathcal{F}$ and $\mathcal{F}'$, respectively. Then

\[\dkl(Y||X)=\sum_{f\in \mathcal{F}'}\mathbb{P}(Y=f)\log\frac{\mathbb{P}(Y=f)}{\mathbb{P}(X=f)}=\log\frac{|\mathcal{F}|}{|\mathcal{F}'|}\le nO\left(\frac{\log^2 k}{k}\right).\]

Note that the random variables $(X(v))_{v\in V\setminus\{v_0\}}$ are independent, thus, Lemma~\ref{lemmaMarton} can be applied to give a coupling of $X$ and $Y$ such that 

\[\frac{1}{n-1}\sum_{v\in V\setminus\{v_0\}}\mathbb{P}(X(v)\neq Y(v))\le \sqrt{\frac{1}{n-1}\dkl(Y||X)}\le O\left(\frac{\log k}{\sqrt{k}}\right).\]

Let $e$ be a uniform random edge incident to $v_0$ independent of $(X,Y)$. Then $(G_X\cup\{e\},G_Y)$ gives a coupling of $G^{1o}$ and $T$. Under this coupling
\[\mathbb{E}\frac{|E(G^{1o})\Delta E(T)|}n\le \frac{1}n\left(1+2\sum_{v\in V\setminus\{v_0\}}\mathbb{P}(X(v)\neq Y(v))\right)=O\left(\frac{\log k}{\sqrt{k}}\right).\]

\subsection{Coupling rooted neighborhoods -- Proof of Theorem~\ref{thm:local-coupling}}

Fix a radius $r\geq 0$. For each $v\in V(G)$, write $N(v)$ for its set of neighbors in $G$. Throughout this proof, use the coupling from Theorem~\ref{thm1dcoupling} and choose a uniform vertex $o$ independently of the pair $(G^{1o},T)$. Then we have 
\begin{align*}1+\deg_{G^{1o}}(v)&=1+\mathbbm{1}(R(R(v))\neq v)+\sum_{u\in N(v)}\mathbbm{1}(R(u)=v)\\&\le 2+\sum_{u\in N(v)}\mathbbm{1}(R(u)=v).\end{align*}
Therefore,
\begin{align*}
    \mathbb{E}\left(1+\deg_{G^{1o}}(v)\right)^2&\le 4+5\sum_{u\in N(v)}\mathbb{P}(R(u)=v)+\sum_{\substack{u_1,u_2\in N(v)\\u_1\neq u_2}} \mathbb{P}(R(u_1)=v\text{ and }R(u_2)=v)\\&\le
    4+5\frac{\deg_G(v)}k+\frac{\deg_G(v)(\deg_G(v)-1)}{k^2}\\&\le 10 \left(\frac{\deg_G(v)}{k}\right)^2.
\end{align*}

Therefore, for this uniform root $o$, we have
\[\mathbb{E}\left(1+\deg_{G^{1o}}(o)\right)^2\le 10 \mathbb{E}\left(\frac{\deg_G(o)}{k}\right)^2=10\varrho.\]

For a subset $U$ of $V(G)$ and a subgraph $H$ of $G$, let $B_{H,r}(U)$ be the $r$-neighborhood of $U$ in the graph $H$, that is, the set of all the vertices which are at distance at most $r$ from some vertex of $U$ in the graph $H$.

For a random subset $U$ such that $o$ is independent of the pair $(G^{1o},U)$, the Cauchy--Schwarz inequality gives
\[\mathbb{P}(o\in B_{G^{1o},1}(U))\le \mathbb{E}\mathbbm{1}(o\in U)(1+\deg_{G^{1o}}(o))\le \sqrt{\mathbb{P}(o\in U)\mathbb{E}\left(1+\deg_{G^{1o}}(o)\right)^2}\le \sqrt{10\varrho}\sqrt{\mathbb{P}(o\in U)}.\]

Observing that $B_{G^{1o},i}(U)=B_{G^{1o},1}(B_{G^{1o},i-1}(U))$, it can be proved by induction that
\[\mathbb{P}(o\in B_{G^{1o},r}(U))\le \left(\mathbb{P}(o\in U)\right)^{2^{-r}}(\sqrt{10\varrho})^{\sum_{i=0}^{r-1} 2^{-i} }.\]
Thus,
\[\mathbb{P}(o\in B_{G^{1o},r}(U))\le 10\varrho\left(\mathbb{P}(o\in U)\right)^{2^{-r}}.\]

Let $U$ be the set of vertices that are incident to an edge in $E(G^{1o})\Delta E(T)$. Then
\[\mathbb{P}(o\in U)=O\left(\frac{\log k}{\sqrt{k}}\right).\]

Note that if $o\notin B_{G^{1o},r}(U)$, then $B_T(o,r)= B_{G^{1o}}(o,r)$. Thus,
\[ 
        \mathbb{P}\left(B_T(o,r)\neq B_{G^{1o}}(o,r)\right)  \le 10\varrho\left(\mathbb{P}(o\in U)\right)^{2^{-r}}\le 10\varrho \left(O\left(\frac{\log k}{\sqrt{k}}\right)\right)^{2^{-r}}.\]

\section{
The asymptotic equipartition property -- Proof of Theorem~\ref{thm1}}\label{proofofthm1}

\subsection{Proof outline}

Given a subset $\mathcal{T}$ of $U$, let $B[\mathcal{T}]$ be the submatrix of $B$ consisting of the columns indexed by the elements of $\mathcal{T}$. If $M$ is a matrix, where the rows and columns are both indexed by $U$, let $M\restriction \mathcal{T}$ be the principal submatrix of $M$ determined by the rows and columns indexed by the elements of $\mathcal{T}$. 

Moreover,
\begin{itemize}
\item let $\psi_{\mathcal{T}}$ be the empirical spectral measure of $d P_H\restriction \mathcal{T}$, 
\item let $\eta_{\mathcal{T}}$ be the empirical spectral measure of $B[\mathcal{T}](B[\mathcal{T}])^T$, and 
\item let $\eta_{\mathcal{T}}'$ be the empirical spectral measure of $(B[\mathcal{T}])^TB[\mathcal{T}]=B^TB\restriction \mathcal{T}$.
\end{itemize}

Let $\eta_k$ be the measure defined in Definition~\ref{ckdef}. Let  $\mathcal{T}_1,\mathcal{T}_2,\dots$ be the coupling provided by Theorem~\ref{thmNachmiasPeled}. We can express $\mathbb{P}^{H_n}(\mathcal{T}_n)$ in terms of the measure $\psi_{\mathcal{T}_n}$ by the formula
\begin{equation}\label{Pasintlog0}\frac{1}r\log\mathbb{P}^{H_n}(\mathcal{T}_n)=-\log(d)+\int \log(t)\,\psi_{\mathcal{T}_n}(dt).\end{equation}
In Lemma~\ref{psietaconv}, we show that $\psi_{\mathcal{T}_n}$ converges weakly to $\eta_k$ almost surely. The starting point of the proof is the result of Nachmias and the second author~\cite{nachmias2025local} on the local weak limit of $\mathcal{T}_n$, which we recalled in Theorem~\ref{thmNachmiasPeled}. As we explained in Section~\ref{seclocallimit}, this local weak convergence implies that $\eta_{\mathcal{T}_n}$ converges to $\eta_k$. The key idea of the proof of Lemma~\ref{psietaconv} is to compare $\psi_{\mathcal{T}_n}$ and $\eta_{\mathcal{T}_n}$, and prove that they have the same weak limit. Once we have proved this, it follows that $\psi_{\mathcal{T}_n}$ also converges to $\eta_k$. By definition, this weak convergence means that for every bounded continuous function $f$, we have
\begin{equation}\label{intlimitchange}\lim_{n\to\infty} \int f(t)\,\psi_{\mathcal{T}_n}(dt)=\int f(t)\,\eta_k(dt).
\end{equation}
By~\eqref{Pasintlog0} and the definition of $c_k$ in Definition~\ref{ckdef}, it suffices to prove that equation~\eqref{intlimitchange} also holds for $f(t)=\mathbbm{1}(t>0)\log(t)$. The change from normalization by $r$ to normalization by $n$ is justified by~\eqref{ranklognormalization}, as we explain at the end of the proof. However, $\mathbbm{1}(t>0)\log(t)$ is not bounded. Thus, besides the weak convergence of $\psi_{\mathcal{T}_n}$, further conditions are required to ensure that \eqref{intlimitchange} holds for $f(t)=\mathbbm{1}(t>0)\log(t)$. Lemma~\ref{logdetgeneral} gives such sufficient conditions. One of the additional conditions is a uniform integrability condition; we prove that this condition holds for $\psi_{\mathcal{T}_n}$ in Lemma~\ref{lemmauniformint}.

\subsection{Details of the proof of Theorem~\ref{thm1}}

\begin{lemma}\label{psietaconv} Let  $\mathcal{T}_1,\mathcal{T}_2,\dots$ be the coupling provided by Theorem~\ref{thmNachmiasPeled}. Then with probability one, $\eta_{\mathcal{T}_n}$, $\eta_{\mathcal{T}_n}'$ and $\psi_{\mathcal{T}_n}$ all converge weakly to $\eta_k$.
\end{lemma}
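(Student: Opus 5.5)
The first two convergences follow from results already stated; the third needs a perturbation argument. Throughout, I work on the almost sure event of Theorem~\ref{thmNachmiasPeled}, and write $\mathcal{T}=\mathcal{T}_n$ and $X=B[\mathcal{T}]$.

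\emph{Step 1: $\eta_{\mathcal{T}_n}$.} Its convergence to $\eta_k$ is exactly Lemma~\ref{NachmiasPeledSpectral}.

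\emph{Step 2: $\eta'_{\mathcal{T}_n}$.} The matrix $XX^T$ is $n\times n$ and $X^TX$ is $r\times r$, and they have the same nonzero eigenvalues with multiplicity. Since $\mathcal{T}$ indexes a basis of the column space, $\operatorname{rank}X=r$, so $\eta_{\mathcal{T}}(\{0\})=(n-r)/n$. The set $\{0\}$ is closed, so the portmanteau theorem and Lemma~\ref{etak0} give
\[
\limsup_n \frac{n-r}{n}\le \eta_k(\{0\})=0,
\]
hence $r/n\to1$. Then
\[
\eta'_{\mathcal{T}}=\frac{n}{r}\,\eta_{\mathcal{T}}-\frac{n-r}{r}\,\delta_0
\]
converges weakly to $\eta_k$. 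I will keep the fact $r=n-o(n)$ for later use.

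\emph{Step 3: $\psi_{\mathcal{T}_n}$.} The plan is to show that $dP_H\restriction\mathcal{T}$ and $X^TX$ differ by a perturbation that is negligible for weak convergence. I will use the standard fact that a rank $o(r)$ change, or a change of normalized Frobenius norm $o(1)$, moves the empirical spectral measure by $o(1)$ in Lévy distance. Write $P_H=B^T(BB^T)^+B$, so that
\[
dP_H\restriction\mathcal{T}-X^TX=X^TMX,\qquad M=d(BB^T)^+-I .
\]
By $C_4$-freeness and biregularity, $BB^T=dI+A$, where $A$ has $\pm1$ entries, $nd k$ nonzero entries in total, and zero diagonal. Hence
\[
\sum_i(\lambda_i/d-1)^2=\Tr(A^2)/d^2=nk/d=o(n),
\]
where $\lambda_i$ are the eigenvalues of $BB^T$. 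Consequently all but $o(n)$ eigenvalues of $BB^T/d$ lie in $[1-\varepsilon,1+\varepsilon]$, and the zero eigenvalues number $n-r=o(n)$ by Step~2.

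I then split $M=M_1+M_2$ using the spectral decomposition of $BB^T$. Here $M_2$ collects the eigenvalues of $BB^T/d$ outside $[1-\varepsilon,1+\varepsilon]$, so $\operatorname{rank}M_2=o(n)$. The remaining part $M_1$ satisfies $\|M_1\|\le 2\varepsilon$ for small $\varepsilon$. The term $X^TM_2X$ has rank $o(r)$, so it does not affect the weak limit. For the other term,
\[
\frac1r\|X^TM_1X\|_F^2\le \frac{4\varepsilon^2}{r}\Tr\bigl((X^TX)^2\bigr)=4\varepsilon^2\int t^2\,\eta'_{\mathcal{T}}(dt).
\]
Letting $\varepsilon\to0$ along with $n$ would finish the proof, provided the second moment of $\eta'_{\mathcal{T}}$ stays bounded.

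\emph{Main obstacle.} That second moment $\frac1r\Tr((X^TX)^2)$ is controlled by $\frac1n\sum_v\deg_{\mathcal{T}}(v)^2$ plus $O(1)$. Local weak convergence alone does not bound this, since there could be rare vertices of huge degree in $\mathcal{T}$. I would handle it by truncation. Fix a large $D$ and delete the rows of $X$ indexed by vertices $v$ with $\deg_{\mathcal{T}}(v)>D$. By the local limit, the fraction of such vertices tends to $\mathbb{P}(\deg_{\mathbb{T}_k}(o)>D)$, which is small for large $D$.

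The deletion changes both $X^TX$ and $X^TMX$ by a matrix of rank at most that number of rows, since $X^TMX=\sum_{v,w} x_v^TM_{vw}x_w$. The truncated matrix has row sums bounded by $D$, and each column has $k+1$ nonzero entries, so its operator norm is bounded by a constant depending on $D$ and $k$. For the truncated matrix the norm estimate above applies directly. Letting first $n\to\infty$, then $\varepsilon\to0$, then $D\to\infty$ shows that $\psi_{\mathcal{T}_n}$ and $\eta'_{\mathcal{T}_n}$ have the same weak limit, which is $\eta_k$.
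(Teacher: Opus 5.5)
Your proof is correct. It uses the same spectral split as the paper but compares the good parts differently.

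\textbf{Same split.} The paper works in $\mathbb{R}^m$ with $\frac1dB^TB=\sum_i\lambda_ig_ig_i^T$ and $P_H=\sum_ig_ig_i^T$. Since $g_{\mathcal{T},i}=X^Tw_i/\sqrt{d\lambda_i}$ for the unit eigenvectors $w_i$ of $BB^T$, its matrices $\sum_{i\in I}d\lambda_ig_{\mathcal{T},i}g_{\mathcal{T},i}^T$ and $d\sum_{i\in I}g_{\mathcal{T},i}g_{\mathcal{T},i}^T$ are precisely $X^TQ_1X$ and $X^T(Q_1+M_1)X$ in your notation. Here $Q_1$ is the spectral projection of $BB^T/d$ onto $[1-\varepsilon,1+\varepsilon]$, so $M_1=Q_1MQ_1$, and the paper's $\sqrt{\varepsilon}$ plays the role of your $\varepsilon$. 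The indices outside $I$ are discarded by Lemma~\ref{smallrankupdate}, just as you discard $M_2$.

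\textbf{Different comparison.} The paper compares the good parts multiplicatively. Since $-2\varepsilon Q_1\le M_1\le 2\varepsilon Q_1$, one has
\[
(1-2\varepsilon)\,X^TQ_1X\le X^T(Q_1+M_1)X\le(1+2\varepsilon)\,X^TQ_1X ,
\]
and Lemma~\ref{semidefcompare}, with $\varepsilon=\varepsilon_n\to0$ slowly, finishes. This comparison is scale-free, so no moment bound is needed and your ``main obstacle'' never arises; you could have skipped the degree truncation entirely. Your additive route does work once you truncate. It goes through $\|X^TM_1X\|_F^2\le\|M_1\|^2\Tr((X^TX)^2)$ and a Hoffman--Wielandt type estimate. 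Local convergence sends the fraction of $v$ with $\deg_{\mathcal{T}}(v)>D$ to $\mathbb{P}(\deg_{\mathbb{T}_k}(o)>D)$, and the Schur test gives $\|X_D\|^2\le D(k+1)$. The correction $X^TMX-X_D^TMX_D=X_S^TMX+X_D^TMX_S$ has rank at most $2|S|$ rather than $|S|$, which is harmless. The cost is an extra approximation layer and a second appeal to the local limit.

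\textbf{The rank step.} You get $r/n\to1$ from the portmanteau theorem and Lemma~\ref{etak0} rather than from \cite[Claim~2.3]{nachmias2025local}. That is valid, but your own trace identity gives more. Each zero eigenvalue of $BB^T$ contributes $1$ to $\sum_i(\lambda_i/d-1)^2=nk/d$, so $n-r\le nk/d$. This is the quantitative form \eqref{ranklognormalization} that the paper needs later.
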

\begin{proof}

The convergence of $\eta_{\mathcal{T}_n}$ to $\eta_k$ was proved in Lemma~\ref{NachmiasPeledSpectral}. Note that $B_n[\mathcal{T}_n](B_n[\mathcal{T}_n])^T$ and $(B_n[\mathcal{T}_n])^TB_n[\mathcal{T}_n]$ have the same nonzero eigenvalues (with the same multiplicities). Observe that \break $B_n[\mathcal{T}_n](B_n[\mathcal{T}_n])^T$ is an $n\times n$ matrix and $B_n^TB_n\restriction \mathcal{T}_n$ is an $r_n\times r_n$ matrix. As we will see in \eqref{rvsn}, $n-r_n=o(n)$. Thus, it is straightforward to see that with probability $1$, the empirical spectral measure of $B_n^TB_n\restriction \mathcal{T}_n$, that is, $\eta'_{\mathcal{T}_n}$ also converges to $\eta_k$.

Now we move on to the proof of the convergence of $\psi_{\mathcal{T}_n}$.  
Let $\lambda_1,\lambda_2,\dots, \lambda_r$ be the nonzero eigenvalues of $\frac{1}dB^TB$, and let $g_1,g_2,\dots, g_r$ be the corresponding orthogonal unit eigenvectors. Note that the vectors $g_1,g_2,\dots, g_r$ form an orthonormal basis of $H$. Then
\begin{align*}
\frac{1}dB^TB&=\sum_{i=1}^r \lambda_i g_ig_i^T,\text{ and }
\\P_H&=\sum_{i=1}^r g_ig_i^T.
\end{align*}

Choose $\varepsilon=\varepsilon_n\in (0,1)$, and let 
\begin{equation}\label{Idef}I=\{i\in [r]\,:\,(\lambda_i-1)^2\le \varepsilon\}.\end{equation}

By \cite[Claim 2.3.]{nachmias2025local}, we have
\begin{equation}\label{Ibound}(1-k(\varepsilon d)^{-1})n\le |I|\le n.\end{equation}

In the rest of the proof, we will choose $\varepsilon$ such that $\varepsilon\to 0$ and $\varepsilon d\to\infty$. (Since $d\to\infty$ such a choice exists.)

Note that $|I|\le r$, so \eqref{Ibound} also implies that
\begin{equation}\label{rvsn}
    r=n-o(n).
\end{equation}

Applying~\eqref{Ibound} with $\varepsilon=1/2$ gives the stronger estimate
\begin{equation}\label{ranklognormalization}
    0\leq n-r\leq\frac{2kn}{d},
    \qquad
    \left(1-\frac{r}{n}\right)\log d\longrightarrow0\,.
\end{equation}

Given a subset $\mathcal{T}$ of $U$, let $g_{\mathcal{T},i}\in \mathbb{R}^\mathcal{T}$ be the projection of the vector $g_i$ onto the components indexed by~$\mathcal{T}$. Observe that
\begin{align*}
B^TB\restriction \mathcal{T}&=\sum_{i=1}^r d\lambda_{i} g_{\mathcal{T},i}g_{\mathcal{T},i}^T,\text{ and }
\\P_H\restriction\mathcal{T}&=\sum_{i=1}^r g_{\mathcal{T},i}g_{\mathcal{T},i}^T.
\end{align*}

Note that $g_{\mathcal{T},1},\dots,g_{\mathcal{T},r}$ is not an orthonormal system, so $\lambda_1,\dots,\lambda_{r}$ are not the eigenvalues of the matrix $\frac{1}d B^TB\restriction \mathcal{T}$.

We have seen that the empirical spectral measure of $B^TB\restriction \mathcal{T}_n=\sum_{i=1}^r (d\lambda_{i}) g_{\mathcal{T}_n,i}g_{\mathcal{T}_n,i}^T$ converges to~$\eta_k$. Combining this with the fact that $|[r]\setminus I|=o(r)$ and Lemma~\ref{smallrankupdate} below, we see that with probability~$1$,
\begin{equation}\label{eqstep1}
  \text{the empirical spectral measure of }\sum_{i\in I}(d\lambda_{i}) g_{\mathcal{T}_n,i}g_{\mathcal{T}_n,i}^T\text{ converges weakly to }\eta_k. 
\end{equation}

\begin{lemma}\label{smallrankupdate}
For each $n$ consider two symmetric $n\times n$ matrices $A_n$ and $C_n$. Assume that $\eta_{A_n}$ converges weakly to $\eta$ and $\rang(A_n-C_n)=o(n)$. Then $\eta_{C_n}$ converges weakly to $\eta$.
\end{lemma}
\begin{proof}
    By interlacing, for all $x\in \mathbb{R}$ we have that
    \[|\eta_{A_n}((-\infty,x])-\eta_{C_n}((-\infty,x])|\le \frac{\rang(A_n-C_n)}n=\frac{o(n)}{n}=o(1).\]
    Thus, for each continuity point $x$ of the cumulative distribution function $t\mapsto \eta((-\infty,t])$, we have
    \[\lim_{n\to\infty} \eta_{C_n}((-\infty,x])=\lim_{n\to\infty}\eta_{A_n}((-\infty,x])=\eta((-\infty,x]).
    \]
    Therefore, the statement follows.
\end{proof}
Combining \eqref{eqstep1}, the definition of $I$, the fact that $\varepsilon_n\to 0$ and Lemma~\ref{semidefcompare} below with the choice of $\delta_n=\sqrt{\varepsilon_n}$, we get that with probability $1$,
\begin{equation}\label{eqstep2}
\text{the empirical spectral measure of }d\sum_{i\in I}g_{\mathcal{T}_n,i}g_{\mathcal{T}_n,i}^T\text{ converges weakly to }\eta_k.
\end{equation}

\begin{lemma}\label{semidefcompare}
For each $n$ consider two nonnegative semidefinite $n\times n$ matrices $A_n$ and $C_n$. Assume that there is a sequence $\delta_n$ converging to $0$ such that $(1+\delta_n) C_n\ge A_n\ge (1-\delta_n) C_n$ and $\eta_{A_n}$ converges weakly to $\eta$. Then $\eta_{C_n}$ converges weakly to $\eta$.
\end{lemma}
\begin{proof}
    From the Courant-Fischer formula if for two nonnegative semidefinite $n\times n$ matrices $X$ and $Y$, we have $X\le Y$, then $\eta_X([0,x])\ge \eta_Y([0,x])$ for all $x\ge 0$.
    
    Thus, for any $x\ge 0$, we have
    \begin{align*}\eta_{C_n}([0,x])&\le \eta_{(1+\delta_n)^{-1} A_n}([0,x])=\eta_{ A_n}([0,(1+\delta_n)x])\text{ and }\\
    \eta_{C_n}([0,x])&\ge \eta_{(1-\delta_n)^{-1} A_n}([0,x])=\eta_{ A_n}([0,(1-\delta_n)x]).
    \end{align*}
    
    Thus, for each continuity point $x$ of the cumulative distribution function $t\mapsto \eta([0,t])$, we have
    \[\lim_{n\to\infty} \eta_{C_n}([0,x])=\lim_{n\to\infty}\eta_{A_n}([0,x])=\eta([0,x]).
    \]
    Therefore, the statement follows.
\end{proof}

Combining \eqref{eqstep2} with Lemma~\ref{smallrankupdate} and the fact that $|[r]\setminus I|=o(r)$, we obtain that 
with probability~$1$,
\begin{equation*}
\text{the empirical spectral measure of }d\sum_{i=1}^rg_{\mathcal{T}_n,i}g_{\mathcal{T}_n,i}^T\text{ converges weakly to }\eta_k,
\end{equation*}
in other words, $\psi_{\mathcal{T}_n}$ converges to $\eta_k$.
\end{proof}

Besides the weak limit of the measures $\psi_{\mathcal{T}_n}$, we also need good control over the behavior of  $\psi_{\mathcal{T}_n}$ near $0$. More precisely, we need the following uniform integrability condition. 

\begin{lemma}\label{lemmauniformint}
    Let $\mathcal{T}_n$ be drawn from the measure $\mathbb{P}^{H_n}$. Consider the coupling of $\mathcal{T}_1,\mathcal{T}_2,\dots$ provided by Theorem~\ref{thmNachmiasPeled}. Then with probability $1$, 
    \[\lim_{\delta\to 0}\limsup_{n\to\infty}\left|\int_{(0,\delta]}\log(t)\,\psi_{\mathcal{T}_n}(dt)\right|=0.\]
\end{lemma}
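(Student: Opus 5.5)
The plan is to transfer the problem from $\psi_{\mathcal{T}_n}$ to the integer matrices $B_n^TB_n\restriction\mathcal{T}_n$, whose small eigenvalues can be controlled arithmetically and combinatorially.

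\textbf{Step 1: domination.} Every row of $B^TB$ has diagonal entry $k+1$ and at most $(k+1)(d-1)$ off-diagonal entries equal to $\pm1$. Hence $\|B^TB\|\le (k+1)d$, that is, $\lambda_i\le k+1$ for all $i$. This gives
\[
d\,P_H=\sum_{i=1}^r d\,g_ig_i^T\ \ge\ \frac{1}{k+1}\sum_{i=1}^r d\lambda_i\, g_ig_i^T=\frac{1}{k+1}B^TB .
\]
Restricting to $\mathcal{T}$ preserves the semidefinite order, so $dP_H\restriction\mathcal{T}\ge (k+1)^{-1}B^TB\restriction\mathcal{T}$. By Courant--Fischer, the $j$-th smallest eigenvalue of $dP_H\restriction\mathcal{T}$ is at least $(k+1)^{-1}$ times the $j$-th smallest eigenvalue of $B^TB\restriction\mathcal{T}$. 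The matrix $dP_H\restriction\mathcal{T}_n$ is nonsingular on the support of $\mathbb{P}^{H_n}$, so none of its eigenvalues vanish. On that support the range of $B[\mathcal{T}_n]$ is the column space of $B$, which has dimension $r$, so $B^TB\restriction\mathcal{T}_n$ is also nonsingular. Therefore it suffices to prove the statement of the lemma with $\psi_{\mathcal{T}_n}$ replaced by $\eta'_{\mathcal{T}_n}$, up to the harmless rescaling $\delta\mapsto(k+1)\delta$.

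\textbf{Step 2: integrality, giving the large eigenvalues for free.} The matrix $M_n=B_n^TB_n\restriction\mathcal{T}_n$ is a nonsingular integer matrix, so $\det M_n\ge1$, i.e.\ $\int\log t\,\eta'_{\mathcal{T}_n}(dt)\ge0$. Since $\Tr M_n=(k+1)r$, the measures $\eta'_{\mathcal{T}_n}$ have uniformly bounded first moment, and so $\log t$ is uniformly integrable at $+\infty$. Using Lemma~\ref{psietaconv} this gives
\[
\limsup_n \int_{(0,\delta]}|\log t|\,\eta'_{\mathcal{T}_n}(dt)\le \int_{(\delta,\infty)}\log t\,\eta_k(dt)-\liminf_n \frac1r\log\det M_n .
\]
If we knew only $\det M_n\ge1$, the right-hand side would tend to $\int\log t\,\eta_k(dt)=2c_k$, which is not $0$ for $k\ge2$. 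So integrality alone yields only a Lück-type bound $\eta'_{\mathcal{T}_n}((0,\delta])\le C/\log(1/\delta)$. To get the lemma we need the matching lower bound
\[
\liminf_n \frac1r\log\det M_n\ \ge\ \int\log t\,\eta_k(dt)=2c_k .
\]

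\textbf{Step 3: the main obstacle, the lower bound on $\det M_n$.} I would attack this with the matching measure, which is why Lemma~\ref{mathingmeasureconvergence} was prepared. The approach is as follows.
\begin{itemize}
\item By Cauchy--Binet, $\det M_n=\sum_{S}\det(B[\mathcal{T}_n]\restriction S)^2$, where $S$ runs over $r$-subsets of rows.
\item First remove the $o(n)$ vertices lying on short cycles of $\mathfrak{G}_n[\mathcal{T}_n]$. This is a rank-$o(n)$ change, which Lemma~\ref{smallrankupdate} and interlacing let us absorb, up to an error in the bottom eigenvalues that I must control separately.
\item Then compare the resulting graph with its path tree. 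By Lemma~\ref{lemmadisintegrate} and Lemma~\ref{mathingmeasureconvergence}, the matching measure $\rho$ of $\mathfrak{G}_n[\mathcal{T}_n]$ converges to the expected spectral measure of $(\mathbb{T}_k,o)$. Lemma~\ref{almostsurleyselfadjoint} supplies self-adjointness, and Lemma~\ref{etak0} excludes an atom at $0$.
\item For the matching polynomial, the lower-tail integrability of $\log|t|$ can be obtained from Heilmann--Lieb-type real-rootedness together with the fact that the constant coefficient counts maximum matchings, an integer at least $1$. Then $\frac{1}{n}\log(\#\text{maximum matchings})$ converges to $\int\log t\,\eta_k(dt)$.
\item Finally, bound $\det M_n$ from below by a suitably signed count of maximum matchings, via the $C_4$-freeness and the Godsil-type relation between the characteristic and matching polynomials on locally tree-like graphs.
\end{itemize}
I expect this final comparison between determinants and matching counts to be the genuinely hard step, because sign cancellations in $\det(B[\mathcal{T}_n]\restriction S)$ can occur along long cycles. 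If it fails, my fallback is to use the determinantal structure directly. Since $\sum_{\mathcal{T}}\mathbb{P}^H(\mathcal{T})=1$, a first-moment bound on the number of $\mathcal{T}$ with unusually small $\det(P_H\restriction\mathcal{T})$ would give the lower bound in probability, and a Borel--Cantelli argument along the coupling would upgrade it to almost surely.
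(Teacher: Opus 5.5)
\textbf{There is a genuine gap: after your Step~1, the bound you need is not a consequence of the hypotheses.} Step~1 itself is valid, and Step~2 correctly shows that you would need $\liminf_n\frac1r\log\det M_n\ge2c_k$, where $M_n=B_n^TB_n\restriction\mathcal{T}_n$. But by Lemma~\ref{lemmaBTBvsP}, for every $\mathcal{T}_n$ in the support,
\[
\frac1r\log\det M_n=\Bigl(\frac1r\log\mathbb{P}^{H_n}(\mathcal{T}_n)+\log d\Bigr)+\frac1r\log{\det}^+\bigl(d^{-1}B_nB_n^T\bigr).
\]
The bracket tends to $2c_k$ by \eqref{AEPrank}. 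That fact is deduced from this very lemma, but it is true, so it is enough to identify what your target says. Your target is therefore equivalent to $\liminf_n\frac1r\log{\det}^+(d^{-1}B_nB_n^T)\ge0$. This is a deterministic statement about the matrices $B_n$, namely the normalization condition \eqref{niceB}.

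The paper does not derive \eqref{niceB} from the setup. It imposes it as an extra hypothesis in Corollary~\ref{thmBTB}, and in topological form as \eqref{nicetau} in Theorem~\ref{thmBTB2}, giving only sufficient conditions such as \eqref{niceB2}. The loss happens in your domination $dP_H\ge(k+1)^{-1}B^TB$. The $O(n/d)$ eigenvectors $g_i$ with $\lambda_i$ near $0$ enter $dP_H$ with weight $d$ but enter $B^TB$ with weight $d\lambda_i$. Integrality only gives ${\det}^+(B_nB_n^T)\ge1$, i.e.\ $\frac1r\log{\det}^+(d^{-1}B_nB_n^T)\ge-\log d$, which is useless as $d\to\infty$.

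Step~3 does not close this. Completing it would amount to proving \eqref{niceB} for all such $B_n$, which is a new result, not what the lemma needs. Moreover, $\frac1n\log(\text{number of maximum matchings})\to\int\log t\,\eta_k(dt)$ is a zero-temperature limit. Heilmann--Lieb plus integrality give only the same Lück-type bound you found in Step~2; this is why the paper works with $Z_\varepsilon$ at fixed $\varepsilon>0$. And, as you note, sign cancellations block bounding $\det(B[S,\mathcal{T}_n])^2$ from below by matching counts.

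Your fallback aims at a sharp lower bound on the full determinant, which is Theorem~\ref{thm1} itself. A first moment over the $\binom{m}{r}$ subsets only yields the constant $\log\frac{k+1}{e}$ in place of $2c_k$.

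\textbf{The missing idea} is to stay with $P_H$, let the determinantal weight cancel the small eigenvalues, and control partial products rather than the full determinant. Let $\lambda_{1,\mathcal{T}}\le\dots\le\lambda_{r,\mathcal{T}}$ be the eigenvalues of $P_H\restriction\mathcal{T}$, so that $\mathbb{P}^H(\mathcal{T})=\prod_i\lambda_{i,\mathcal{T}}$. Let $e_j(A)$ denote the sum of the $j\times j$ principal minors of $A$. Then for every $\ell\le r$,
\[
\mathbb{E}\prod_{i=1}^{\ell}\lambda_{i,\mathcal{T}}^{-1}=\sum_{\mathcal{T}}\prod_{i>\ell}\lambda_{i,\mathcal{T}}\le\sum_{\mathcal{T}}e_{r-\ell}\bigl(P_H\restriction\mathcal{T}\bigr)=\binom{m-r+\ell}{\ell}\binom{r}{\ell}.
\]
The last equality holds because each $(r-\ell)$-set lies in $\binom{m-r+\ell}{\ell}$ sets $\mathcal{T}$, and $e_{r-\ell}(P_H)=\binom{r}{\ell}$. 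This gives $\mathbb{E}\prod_{i\le\ell}(d\lambda_{i,\mathcal{T}})^{-1}\le(e^2mr/(d\ell^2))^\ell$.

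Markov's inequality, a union bound over $\ell$ and Borel--Cantelli then give, almost surely for all large $n$ and simultaneously for all $\ell$,
\[
-\sum_{i\le\ell}\log(d\lambda_{i,\mathcal{T}_n})\le2\log n+2\ell\log(3r/\ell).
\]
Take $\ell=r\psi_{\mathcal{T}_n}([0,\delta])$, which is at most $\delta'r$ for large $n$ by Lemma~\ref{psietaconv} and Lemma~\ref{etak0}. This yields
\[
\Bigl|\int_{(0,\delta]}\log t\,\psi_{\mathcal{T}_n}(dt)\Bigr|\le\frac{2\log n}{r}+2\delta'\log\frac{3}{\delta'}.
\]
This argument never involves ${\det}^+(B_nB_n^T)$ or the value of $c_k$.
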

\begin{proof}

    Given a subset $\mathcal{T}$ of $U$, let $\lambda_{1,\mathcal{T}}\le \lambda_{2,\mathcal{T}}\le \cdots\le \lambda_{|\mathcal{T}|,\mathcal{T}}$ be the eigenvalues of $P_H\restriction\mathcal{T}$. It follows from \cite[Lemma 6]{meszaros2026homology} that if $\mathcal{T}$ is drawn from the measure $\mathbb{P}^H$, then for all $1\le \ell\le r$, we have
    
    \begin{equation}\label{Ebound}\mathbb{E}\prod_{i=1}^\ell (d\lambda_{i,\mathcal{T}})^{-1}\le \left(\frac{e^2mr}{d\ell^2}\right)^\ell.\end{equation}
    Using~\eqref{rvsn}, for large enough $n$, \[\frac{8mr}{d\ell^2}=\frac{r^2}{\ell^2}\frac{8m}{rd}\le \frac{r^2}{\ell^2}\frac{9m}{nd}=\frac{9r^2}{(k+1)\ell^2}\le \frac{9r^2}{\ell^2}.\]
    Combining this with Markov's inequality and \eqref{Ebound}, we get
    \begin{align*}\mathbb{P}\left(-\sum_{i=1}^\ell\log(d\lambda_{i,\mathcal{T}})\ge 2\log(n)+2\ell\log\left(\frac{3r}{\ell}\right)\right)&\le\mathbb{P}\left(-\sum_{i=1}^\ell\log(d\lambda_{i,\mathcal{T}})\ge 2\log(n)+\ell\log\left(\frac{8mr}{d\ell^2}\right)\right)\\&=\mathbb{P}\left(\prod_{i=1}^\ell (d\lambda_{i,\mathcal{T}})^{-1}\ge n^2\left(\frac{8mr}{d\ell^2}\right)^{\ell}\right)\\&\le \left(\frac{e^2mr}{d\ell^2}\right)^\ell n^{-2}\left(\frac{d\ell^2}{8mr}\right)^{\ell}\\&=n^{-2}\left(\frac{e^2}8\right)^\ell.
    \end{align*}

Combining this with the union bound, the probability that there is an index $1\le \ell\le r$ such that $-\sum_{i=1}^\ell\log(d\lambda_{i,\mathcal{T}})\ge 2\log(n)+2\ell\log\left(\frac{3r}{\ell}\right)$ is at most $n^{-2}\sum_{\ell=1}^\infty\left(\frac{e^2}8\right)^\ell<20n^{-2}$. 

Since $\sum_{n=1}^\infty 20n^{-2}<\infty$, by the Borel--Cantelli lemma, with probability $1$, for all large enough $n$, we have that
\[-\sum_{i=1}^\ell\log(d\lambda_{i,\mathcal{T}})\le 2\log(n)+2\ell\log\left(\frac{3r}{\ell}\right)\text{ for all }1\le \ell\le r.\]

Thus, from now on we assume that the event above occurs. Given $\delta$, let us define the random index $i_\delta=i_{\delta,n}$ as
\[i_\delta=|\{i\in[r]\,:\,d\lambda_{i,\mathcal{T}}\le \delta\}|.\]
We see that $i_\delta=r\psi_{\mathcal{T}_n}([0,\delta])$. We use the continuous value $0$ for $x\log(a/x)$ at $x=0$, for $a>0$. Since $\psi_{\mathcal{T}_n}$ converge weakly to $\eta_k$ by Lemma~\ref{psietaconv}, and $\lim_{\delta\to 0}\eta_k([0,\delta])=0$ by Lemma~\ref{etak0}, we see that for any fixed $0<\delta'<1$ provided that $\delta$ is small enough, we have $i_\delta\le r\delta'$ for all large enough $n$. Given such a $\delta$, for all large enough $n$, we have
\[\left|\int_{(0,\delta]} \log(t)\psi_{\mathcal{T}_n}(dt)\right|=-\frac{1}r\sum_{i=1}^{i_\delta} \log(d\lambda_{i,\mathcal{T}_n})\le \frac{2\log(n)}{r}+2\frac{i_\delta}{r}\log\left(\frac{3r}{i_\delta}\right)\le \frac{2\log(n)}{r}+2\delta'\log\left(\frac{3}{\delta'}\right). \]
Here the right hand side is less than $\varepsilon$ provided that $\delta'$ is small enough and $n$ is large enough.
\end{proof}
The following lemma was basically proved in \cite[Section 3.3]{meszaros2026homology}. For completeness, we provide a proof.
\begin{lemma}\label{logdetgeneral}
    Let $\mu_n$ be a sequence of probability measures on $[0,\infty)$ such that
    \begin{enumerate}[(a)]
        \item\label{conda} $\mu_n$ converge weakly to some probability measure $\mu$.
        \item\label{condb} We have
        \[\lim_{\delta\to 0}\limsup_{n\to\infty}\left|\int_{(0,\delta]}\log(t)\,\mu_{n}(dt)\right|=0.\]
        \item\label{condc} The sequence $\int t\,\mu_n(dt)$ is bounded by a finite constant $M$.
    \end{enumerate}
    Then
    \[\lim_{n\to\infty} \int_{(0,\infty)}\log(t) \mu_n(dt)= \int_{(0,\infty)}\log(t) \mu(dt).\]
    Moreover, the integral on the right is finite.
\end{lemma}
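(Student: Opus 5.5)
We split $\log(t)$ on $(0,\infty)$ into three pieces: a neighbourhood of $0$, a compact middle range, and a tail at infinity. Concretely, fix $0<\delta<1<K$, chosen so that $\mu(\{\delta\})=\mu(\{K\})=0$; this is possible since $\mu$ has at most countably many atoms. Write
\[
\int_{(0,\infty)}\log(t)\,\mu_n(dt)=\int_{(0,\delta]}\log(t)\,\mu_n(dt)+\int_{(\delta,K]}\log(t)\,\mu_n(dt)+\int_{(K,\infty)}\log(t)\,\mu_n(dt),
\]
and do the same for $\mu$.

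\textbf{Middle term.} On $(\delta,K]$ the function $\log$ is bounded and continuous, and the endpoints are $\mu$-null. So the portmanteau theorem, applied to the bounded function $\mathbbm{1}_{(\delta,K]}\log$ whose discontinuity set is $\mu$-null, gives
\[
\int_{(\delta,K]}\log(t)\,\mu_n(dt)\to\int_{(\delta,K]}\log(t)\,\mu(dt).
\]

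\textbf{Tail term.} For $t\ge K\ge e$ we have $0\le\log t\le t\log(K)/K$, since $\log(t)/t$ is decreasing there. Condition (c) then gives
\[
0\le\int_{(K,\infty)}\log(t)\,\mu_n(dt)\le \frac{M\log K}{K}.
\]
For $\mu$ we first check $\int t\,\mu(dt)\le M$. This follows from weak convergence and Fatou applied to the truncations $\min(t,L)$, letting $L\to\infty$. Hence the same bound holds for $\mu$, and it tends to $0$ as $K\to\infty$.

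\textbf{Small-$t$ term: the main obstacle.} The term near $0$ is controlled for $\mu_n$ directly by condition (b). The real work is to transfer this to $\mu$, which both shows $\int_{(0,\infty)}|\log t|\,\mu(dt)<\infty$ and gives $\int_{(0,\delta]}\log(t)\,\mu(dt)\to0$ as $\delta\to0$.

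I would argue as follows. For $0<\delta_1<\delta$, both $\mu$-continuity points, the portmanteau argument above gives
\[
\int_{(\delta_1,\delta]}|\log t|\,\mu(dt)=\lim_n\int_{(\delta_1,\delta]}|\log t|\,\mu_n(dt)\le\limsup_n\int_{(0,\delta]}|\log t|\,\mu_n(dt).
\]
Since $\delta<1$, $\log$ has constant sign on $(0,\delta]$, so the right-hand side is exactly the quantity in (b). Letting $\delta_1\downarrow0$ along continuity points and using monotone convergence, we get
\[
\int_{(0,\delta]}|\log t|\,\mu(dt)\le \limsup_n\Bigl|\int_{(0,\delta]}\log(t)\,\mu_n(dt)\Bigr|,
\]
which tends to $0$ as $\delta\to0$ by (b). In particular $\int_{(0,\delta]}|\log t|\,\mu(dt)$ is finite, and together with the tail bound this proves that the limit integral is finite.

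Note that the atom of $\mu_n$ or $\mu$ at $0$ plays no role, because every integral is taken over sets excluding $0$.

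\textbf{Conclusion.} Combining the three pieces,
\[
\limsup_n\Bigl|\int_{(0,\infty)}\log(t)\,\mu_n(dt)-\int_{(0,\infty)}\log(t)\,\mu(dt)\Bigr|\le 2\sup_{n\text{ large}}\Bigl|\int_{(0,\delta]}\log(t)\,\mu_n(dt)\Bigr|+\int_{(0,\delta]}|\log t|\,\mu(dt)+\frac{2M\log K}{K}.
\]
The right-hand side can be made arbitrarily small: let $\delta\to0$ and $K\to\infty$ along continuity points, using the bounds from the small-$t$ and tail steps. This proves the lemma.
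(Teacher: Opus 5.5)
Your proof is correct and follows essentially the same scheme as the paper: condition (b) controls the region near $0$, the first-moment bound (c) controls the tail, weak convergence handles the middle range, and monotone convergence identifies the limit and shows it is finite. The differences are only technical. You cut with indicators at $\mu$-continuity points, using the a.e.-continuous form of the portmanteau theorem, and you bound the tail pointwise by $\log t\le t\log K/K$. The paper instead uses the continuous truncations $f_\delta$ and $g_\omega$ together with Jensen and Markov, which lets it read off $\int\log t\,\mu(dt)$ by monotone convergence without separately bounding the tails of $\mu$.
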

\begin{proof}
Given a $\delta\in (0,1)$, let us consider the function $f_\delta:\mathbb{R}\to\mathbb{R}$ defined as follows:
\[f_\delta(t)=\begin{cases}
0&\text{ if }t\le 0,\\
\frac{t}{\delta}\log(\delta)&\text{ if }0<t\le \delta,\\
\log(t)&\text{ if }\delta<t\le 1,\\
0&\text{ if }1<t.
\end{cases}\]

We have
\[\left|\int f_\delta(t)\,\mu_n(dt)-\int \mathbbm{1}(0<t<1)\log(t)\,\mu_n(dt) \right|\le \left|\int_{(0,\delta)}\log(t)\,\mu_n(dt)\right|.\]

Since $f_\delta$ is a bounded continuous function and $\mu_n$ converges weakly to $\mu$, we have
\[\limsup_{n\to \infty}\left|\int f_\delta(t)\,\mu_n(dt)-\int f_\delta\,\mu(dt) \right|=0.\]

Thus, for all large enough $n$, we have
\[\limsup_{n\to\infty}\left|\int f_\delta(t)\,\mu(dt)-\int \mathbbm{1}(0<t<1)\log(t)\,\mu_n(dt) \right|\le \limsup_{n\to\infty} \left|\int_{(0,\delta)}\log(t)\,\mu_n(dt)\right|.\]

Combining this with condition~\eqref{condb}, it follows that
\[\lim_{n\to\infty} \int \mathbbm{1}(0<t<1)\log(t)\,\mu_n(dt)=\lim_{\delta\to 0} \int f_\delta(t)\,\mu(dt)=\int \mathbbm{1}(0<t<1)\log(t)\,\mu(dt)>-\infty,\]
where the last equality follows from the monotone convergence theorem.

Given $\omega>e$, let us consider the function $g_\omega:\mathbb{R}\to\mathbb{R}$ defined as follows:
\[g_\omega(t)=\begin{cases}
0&\text{ if }t\le 1,\\
\log(t)&\text{ if }1<t\le\omega,\\
\log(\omega)&\text{ if }\omega<t.
\end{cases}\]

Then, by Jensen's inequality, we have
\begin{align*}\left|\int g_\omega(t)\,\mu_n(dt)-\int \mathbbm{1}(1<t)\log(t)\,\mu_n(dt) \right|&\le \int_{(\omega,\infty)}\log(t)\,\mu_n(dt)\\&\le \mu_n((\omega,\infty))\log\left(\frac{\int_{(\omega,\infty)} t \mu_n(dt)}{\mu_n((\omega,\infty))}\right)\\&\le \mu_n((\omega,\infty))\log\left(\frac{M}{\mu_n((\omega,\infty))}\right). 
\end{align*}

By Markov's inequality, $\mu_n((\omega,\infty))\le \frac{M}{\omega}$. The function $x\mapsto x\log\left(\frac{M}{x}\right)$ is monotone increasing on the interval $[0,\frac{M}{e}]$. Thus, recalling that $\omega>e$, we see that
\[\left|\int g_\omega(t)\,\mu_n(dt)-\int \mathbbm{1}(1<t)\log(t)\,\mu_n(dt) \right|\le \mu_n((\omega,\infty))\log\left(\frac{M}{\mu_n((\omega,\infty))}\right)\le \frac{M}{\omega}\log(\omega). \]

Since $g_\omega$ is a bounded continuous function and $\mu_n$ converges weakly to $\mu$, we have
\[\limsup_{n\to \infty}\left|\int g_\omega(t)\,\mu_n(dt)-\int g_\omega\,\mu(dt) \right|=0.\]

Thus,
\[\limsup_{n\to \infty}\left|\int \mathbbm{1}(1< t)\log(t)\,\mu_n(dt)-\int g_\omega\,\mu(dt) \right|\le \frac{M}{\omega}\log(\omega).\]
Since $\frac{M}{\omega}\log(\omega)\to 0$ as $\omega\to\infty$, it follows that
\[\lim_{n\to\infty} \int \mathbbm{1}(1\le t)\log(t)\,\mu_n(dt)=\lim_{\omega\to \infty} \int g_\omega(t)\,\mu(dt)=\int \mathbbm{1}(1\le t)\log(t)\,\mu(dt)<\infty.\qedhere\]
\end{proof}

\begin{lemma}\label{Tracebound}
Let $\mathcal{T}$ be an $r$-element subset of $U$. Then
\[\frac{d}{n}\Tr(P_H\restriction \mathcal{T})\le 8k.\]

\end{lemma}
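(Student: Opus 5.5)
The plan is to bound each diagonal entry of $P_H$ through the spectral decomposition used in the proof of Lemma~\ref{psietaconv}. Write $P_H=\sum_{i=1}^r g_ig_i^T$, where $g_i$ are orthonormal eigenvectors of $\frac1d B^TB$ with nonzero eigenvalues $\lambda_i$. Fix an absolute constant $\varepsilon\in(0,1)$; I plan to take $\varepsilon=0.3$. Let $I$ be as in~\eqref{Idef}. Then
\[
\Tr(P_H\restriction\mathcal{T})=\sum_{i\in I}\|g_{\mathcal{T},i}\|^2+\sum_{i\in[r]\setminus I}\|g_{\mathcal{T},i}\|^2,
\]
and the two sums will be bounded separately.

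\emph{Terms with $i\notin I$.} Since $\|g_{\mathcal{T},i}\|^2\le\|g_i\|^2=1$, this sum is at most $|[r]\setminus I|=r-|I|$. By~\eqref{Ibound} and $r\le n$, this is at most $n-(1-k(\varepsilon d)^{-1})n=\frac{kn}{\varepsilon d}$. It therefore contributes at most $k/\varepsilon$ to $\frac dn\Tr(P_H\restriction\mathcal{T})$.

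\emph{Terms with $i\in I$.} For these indices $\lambda_i\ge 1-\sqrt{\varepsilon}>0$. Hence, as positive semidefinite matrices,
\[
\sum_{i\in I}g_ig_i^T\le \frac{1}{1-\sqrt\varepsilon}\sum_{i\in I}\lambda_ig_ig_i^T\le\frac{1}{(1-\sqrt\varepsilon)d}B^TB .
\]
Taking principal submatrices preserves this order, and therefore also the order of traces. Since every $u\in U$ has degree $k+1$ and the entries of $B$ are $\pm1$, each diagonal entry satisfies $(B^TB)_{uu}=k+1$. Using $|\mathcal{T}|=r\le n$, the $I$-part of the trace is at most $\frac{(k+1)n}{(1-\sqrt\varepsilon)d}$. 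It therefore contributes at most $\frac{k+1}{1-\sqrt\varepsilon}$ to $\frac dn\Tr(P_H\restriction\mathcal{T})$.

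\emph{Combining.} Adding the two contributions gives
\[
\frac dn\Tr(P_H\restriction\mathcal{T})\le \frac{k+1}{1-\sqrt\varepsilon}+\frac{k}{\varepsilon}.
\]
With $\varepsilon=0.3$ we have $1/(1-\sqrt{0.3})<2.22$ and $1/0.3<3.34$. The right-hand side is then below $5.56k+2.22\le 8k$ for all $k\ge1$.

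The only delicate point is the choice of $\varepsilon$ needed to reach the constant $8$ in the statement. The natural choice $\varepsilon=1/2$, which was used for~\eqref{ranklognormalization}, fails for $k=1$: it gives $\frac{2}{1-\sqrt{1/2}}+2\approx 8.83>8$. The argument also relies on~\eqref{Ibound} holding for a fixed $\varepsilon$ and all $n$, as in the citation of \cite[Claim 2.3]{nachmias2025local}. If it were only available for $\varepsilon=1/2$, I would instead use it to bound the rank deficit and recheck the constants, possibly obtaining the statement with a slightly larger constant.
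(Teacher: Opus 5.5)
Your proof is correct and is essentially the paper's argument. You split $P_H$ according to $I$, dominate the $I$-part by a multiple of $d^{-1}B^TB$ (whose diagonal entries equal $k+1$), and bound the remaining part by $|[r]\setminus I|$ via \eqref{Ibound}, which indeed holds for any fixed $\varepsilon\in(0,1)$; the only difference is that the paper takes $\varepsilon=1/4$, giving $\lambda_i\ge 1/2$ on $I$ and the bound $2(k+1)+4k=6k+2\le 8k$.
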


\begin{proof}
    Let us choose $\varepsilon=\frac{1}4$, and define $I$ as in~\eqref{Idef}. Then it follows from \eqref{Ibound} that
    \[|[r]\setminus I|\le \frac{4kn}{d}.\]
    Let 
    \[P_1=\sum_{i\in I}g_ig_i^T\text{ and }P_2=\sum_{i\in [r]\setminus I}g_ig_i^T.\]

    Recall that for all $i\in I$, we have $\lambda_i\ge \frac{1}2$, so
    \[P_1=\sum_{i\in I}g_ig_i^T\le 2 \sum_{i=1}^r\lambda_ig_ig_i^T=\frac{2}dB^TB.\]
    Each diagonal entry of $B^TB$ is $k+1$. Therefore, each diagonal entry of $P_1$ is at most $\frac{2(k+1)}d$. Thus,
    \[\Tr(P_1\restriction \mathcal{T})\le \frac{2(k+1)n}d.\]
    Then,
    \[\Tr(P_H\restriction \mathcal{T})=\Tr(P_1\restriction \mathcal{T})+\Tr(P_2\restriction \mathcal{T})\le \frac{2(k+1)n}d+\Tr(P_2)=\frac{2(k+1)n}d+|[r]\setminus I|\le \frac{8kn}{d}.\qedhere\]
\end{proof}

Now we are ready to prove Theorem~\ref{thm1}.

We have
\begin{equation}\label{Pasintlog}\frac{1}r\log\mathbb{P}^{H_n}(\mathcal{T}_n)=\frac{1}r\log\det (P_H\restriction\mathcal{T}_n)=\frac{1}r\log \left(d^{-r}\det(dP_H\restriction\mathcal{T}_n)\right)=-\log(d)+\int \log(t) \,\psi_{\mathcal{T}_n}(dt).\end{equation}

Let us consider the coupling of $\mathcal{T}_1,\mathcal{T}_2,\dots$ provided by Theorem~\ref{thmNachmiasPeled}. We claim that with probability one, the sequence of measures $\mu_n=\psi_{\mathcal{T}_n}$ satisfies the conditions of Lemma~\ref{logdetgeneral} with the choice of $\mu=\eta_k$. Indeed, condition~\eqref{conda} is clear from Lemma~\ref{psietaconv}, condition~\eqref{condb} follows from Lemma~\ref{lemmauniformint}. To prove condition~\eqref{condc}, combining Lemma~\ref{Tracebound} and the fact that $\frac{r}n\to 1$, we have 
\[\limsup_{n\to\infty} \int t\mu_n(dt)= \limsup_{n\to\infty} \frac{d}{r} \Tr(P_H\restriction\mathcal{T}_n)\le 8k.\]

Thus, by Lemma~\ref{logdetgeneral}, we see that with probability $1$,
\[\lim_{n\to\infty}\int \log(t) \,\psi_{\mathcal{T}_n}(dt)=\int \log(t) \,\eta_k(dt)=2c_k,\]
where $c_k$ was defined in Definition~\ref{ckdef}. 

Combining this with~\eqref{Pasintlog}, we obtain, under this coupling,
\begin{equation}\label{AEPrank}
    \frac{\log\mathbb{P}^{H_n}(\mathcal{T}_n)}r+\log d
    \longrightarrow2c_k\qquad\text{almost surely.}
\end{equation}
Finally,
\[
    \frac{\log\mathbb{P}^{H_n}(\mathcal{T}_n)}n+\log d
    =\frac rn\left(\frac{\log\mathbb{P}^{H_n}(\mathcal{T}_n)}r+\log d\right)
    +\left(1-\frac rn\right)\log d\longrightarrow2c_k
\]
almost surely, by~\eqref{ranklognormalization}. This proves Theorem~\ref{thm1}.

\section{Coupling in the general setting -- Proof of Theorem~\ref{thmgeneralcoupling}}\label{SecGeneralCoupling}

\subsection{Constructing the coupling}
Recall that $R(v)$ denotes the independent neighbor choice of a vertex $v$ defining $\mathcal{T}_n^{1o}$.
Let $\mathcal{F}_n$ be the set of functions $f:V_n\to U_n\cup\{*\}$ such that for all $v\in V_n$, $f(v)=*$  or $f(v)$ is a neighbor of $v$ in the graph $\mathfrak{G}_n$. By considering the edge set $\{(vf(v))\,:\,v\in V_n,\, f(v)\neq *\}$, we see that there is a one-to-one correspondence between $\mathcal{F}_n$ and the set of subgraphs of $\mathfrak{G}_n$ where each vertex of $V_n$ has degree at most~$1$.

Fix $0<\varepsilon<1/2$. 
Let $X_n$ be a random element of $\mathcal{F}_n$ such that $(X_n(v))_{v\in V_n}$ are independent and
\[\mathbb{P}(X_n(v)=u)=\begin{cases}
\varepsilon&\text{ if $u=*$,}\\
\frac{1-\varepsilon}d&\text{ if $u$ is a neighbor of $v$,}\\
0&\text{ otherwise.}
\end{cases}\]

Another way to get $X_n$ is as follows: Let $(I_{v})_{v\in V_n}$ be i.i.d. Bernoulli($\varepsilon$) random variables independent of $(R(v))_{v\in V_n}$, and let
\[X_n(v)=\begin{cases}*&\text{ if $I_v=1$,}\\
R(v)&\text{ if $I_v=0$.}
\end{cases}\]
Recall that $\mathcal{T}_n^{1o}=\{R(v)\,:\,v\in V_n\}$. Thus, the definition above gives a coupling of $\mathcal{T}^{1o}_n$ and $X_n$ such that
\begin{equation}\label{ToutvsX}\mathbb{E}|\mathcal{T}^{1o}_n\Delta \{X_n(v)\,:\,v\in V_n,\,X_n(v)\neq*\}|\le\varepsilon n.
\end{equation}

Let $\mathcal{T}$ be a subset of $U_n$, and let $\mathcal{M}=\mathcal{M}(\mathcal{T})$ be the set of matchings of the graph $\mathfrak{G}_n[\mathcal{T}]$. For an $M\in \mathcal{M}$, let $|M|$ be the number of edges of $M$. Consider the measure $\mathbb{P}^\mathcal{T}$ on $\mathcal{M}$ where the probability assigned to a matching $M$ is proportional to $(1-\varepsilon)^{|M|}\varepsilon^{n-|M|}$. In other words,

\[\mathbb{P}^\mathcal{T}(M)=\frac{(1-\varepsilon)^{|M|}\varepsilon^{n-|M|}}{Z_\varepsilon(\mathcal{T})},\]
where 
\[Z_\varepsilon(\mathcal{T})=\sum_{M\in \mathcal{M}} (1-\varepsilon)^{|M|}\varepsilon^{n-|M|}.\]

\begin{lemma}\label{lemmaZlimit}
    Let $\mathcal{T}_n$ be an $r$-element subset of $U_n$ such that $(\mathfrak{G}_n[\mathcal{T}_n],o_n)$ converges to $(\mathbb{T}_k,o)$, where $o_n$ is a uniform random vertex in $V_n$ and $(\mathbb{T}_k,o)$ is the random rooted tree from the statement of Theorem~\ref{thmNachmiasPeled}. Let $\eta_k$ be defined as in Definition~\ref{ckdef}. Then
    \[\lim_{n\to\infty}\frac{\log Z_\varepsilon(\mathcal{T}_n)}n=\zeta_\varepsilon , \]
    where 
    \[\zeta_\varepsilon= \int\log(\varepsilon+(1-\varepsilon)x) \eta_k(dx). \]

    Moreover,
    \begin{equation}\label{zetalimit}\lim_{\varepsilon\to 0} \zeta_\varepsilon=\int_{(0,\infty)} \log(x)\, \eta_k(dx)=2c_k.\end{equation}
\end{lemma}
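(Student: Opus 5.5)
Write $G_n=\mathfrak{G}_n[\mathcal{T}_n]$. It is bipartite with colour classes $V_n$ ($n$ vertices) and $\mathcal{T}_n$ ($r$ vertices), and $r\le n$. The plan is to express $Z_\varepsilon(\mathcal{T}_n)$ through the matching polynomial of $G_n$ and then pass to the limit with the matching measure.

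\textbf{Reduction to the matching polynomial.} Grouping matchings by size gives
\[Z_\varepsilon(\mathcal{T}_n)=\sum_j m_j(G_n)(1-\varepsilon)^j\varepsilon^{n-j}.\]
By Heilmann--Lieb, $\mu(G_n,x)=\sum_j(-1)^jm_j x^{n+r-2j}$ has real roots that are symmetric about $0$. Its nonzero roots therefore come in pairs $\pm\sqrt{y}$, and the $y$'s are the nonzero roots of $q(y)=\sum_j (-1)^j m_j y^{n-j}$, up to a power of $y$. Since $q$ has nonnegative real roots $y_1,\dots,y_n$ (with zeros included), we get
\[\sum_j m_j s^j=\prod_{i=1}^n (1+s y_i).\]
Put $s=(1-\varepsilon)/\varepsilon$ and multiply by $\varepsilon^n$:
\[Z_\varepsilon=\prod_{i=1}^n(\varepsilon+(1-\varepsilon)y_i).\]
Hence
\[\frac1n\log Z_\varepsilon=\int\log(\varepsilon+(1-\varepsilon)y)\,\tilde\rho_n(dy),\]
where $\tilde\rho_n=\frac1n\sum_i\delta_{y_i}$. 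By construction $\tilde\rho_n$ is exactly the pushforward of $\frac{2n}{n+r}\big(\rho_{G_n}-\frac{r-n}{\cdots}\big)$ under $t\mapsto t^2$, suitably normalized. Concretely, Lemma~\ref{lemmadisintegrate} (with $V=V_n$, $|V|\ge|U|$) identifies $\tilde\rho_n$ with the pushforward under $t\mapsto t^2$ of $\mathbb{E}\nu_{(T_{o_n}(G_n),\bar o_n)}$, with $o_n$ uniform in $V_n$. The bookkeeping of zero roots needs care, but the $\delta_0$ correction in Lemma~\ref{lemmadisintegrate} is designed to handle it.

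\textbf{Passing to the limit.} By \eqref{rvsn}, $r=n-o(n)$, so Lemma~\ref{mathingmeasureconvergence} applies together with Lemma~\ref{almostsurleyselfadjoint}. The path-tree argument in its proof, combined with the hypothesis that $(G_n,o_n)\to(\mathbb{T}_k,o)$, gives $\mathbb{E}\nu_{(T_{o_n}(G_n),\bar o_n)}\to\mathbb{E}\nu_{(\mathbb{T}_k,o)}$ weakly. Hence $\tilde\rho_n\to\eta_k$ weakly. For fixed $\varepsilon>0$ the integrand $\log(\varepsilon+(1-\varepsilon)y)$ is continuous on $[0,\infty)$ and bounded below by $\log\varepsilon$, so the only issue is the growth at infinity. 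This is handled by the truncation argument in the second half of Lemma~\ref{logdetgeneral}, which uses a uniform first-moment bound. Here
\[\int y\,\tilde\rho_n(dy)=\frac{m_1(G_n)}{n}=\frac{(k+1)r}{n}\le k+1,\]
and the integrand is at most $\log(1+y)\le y$-controlled by Jensen. Since the integrand is bounded near $0$, no analogue of condition~\eqref{condb} is needed. This yields $\frac1n\log Z_\varepsilon(\mathcal{T}_n)\to\zeta_\varepsilon$. I expect the main obstacle to be verifying the identification of $\tilde\rho_n$ with the squared disintegrated matching measure, rather than the limit itself.

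\textbf{The limit $\varepsilon\to0$.} By Lemma~\ref{etak0}, $\eta_k(\{0\})=0$. On $(0,1]$, the integrand $\log(\varepsilon+(1-\varepsilon)x)$ lies between $\log x$ and $0$, because $\varepsilon+(1-\varepsilon)x\ge x$ there, and it converges pointwise to $\log x$. Since $\int_{(0,1]}|\log x|\,\eta_k(dx)<\infty$ by Lemma~\ref{logdetgeneral}, dominated convergence applies on this range. On $(1,\infty)$, the integrand lies between $0$ and $\log x$, and $\int_{(1,\infty)}\log x\,\eta_k(dx)<\infty$, so dominated convergence applies again. Combining the two ranges gives the limit $\int_{(0,\infty)}\log x\,\eta_k(dx)=2c_k$ by Definition~\ref{ckdef}.
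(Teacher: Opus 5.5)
Your proposal is correct and follows essentially the paper's route. You express $Z_\varepsilon$ through the matching polynomial, pass to the limit via the path-tree disintegration and local convergence, and control the logarithmic growth at infinity by a uniform moment bound. You then let $\varepsilon\to0$ using $\eta_k(\{0\})=0$, where the paper uses monotone convergence on $(0,1)$ and $[1,\infty)$ instead of dominated convergence.

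Your exact factorization is slightly cleaner than the paper's $\frac{1}{n+r}$-normalized identity combined with $r/n\to1$. The identification you flagged as the likely obstacle does check out, though your first formula for $\tilde\rho_n$ in terms of $\rho_{G_n}$ is garbled. Since $q(x^2)=x^{n-r}\mu(G_n,x)$, you get $\frac{1}{2n}\sum_i(\delta_{\sqrt{y_i}}+\delta_{-\sqrt{y_i}})=\frac{n+r}{2n}\rho_{G_n}+\frac{n-r}{2n}\delta_0$. This is exactly $\mathbb{E}\nu_{(T_{o_n}(G_n),\bar o_n)}$ by Lemma~\ref{lemmadisintegrate}.
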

\begin{proof}
Let 
\[f_n(t)=\sum_{M\in \mathcal{M}(\mathcal{T}_n)} t^{|M|}.\]
Note that $Z_\varepsilon(\mathcal{T}_n)=\varepsilon^n f_n\left(\frac{1-\varepsilon}\varepsilon\right)$.

Let $\rho_n$ be the matching measure of $\mathfrak{G}_n[\mathcal{T}_n]$. Then as in the proof of \cite[Theorem 3.3.]{abert2015matching}, we see that
\[\frac{1}{n+r}\log f_n\left(\frac{1-\varepsilon}\varepsilon\right)=\frac{1}2\int \log\left(1+\frac{1-\varepsilon}\varepsilon x^2\right) \rho_n(dx). \]

By Lemma~\ref{mathingmeasureconvergence}, we see that $\rho_n$ converges weakly to $\nu=\mathbb{E}\nu_{(\mathbb{T}_k,o)}$. Combining this with the fact that $r=(1+o(1))n$, and Lemma~\ref{mathingmeasureconvergence}, we obtain that
\[\lim_{n\to\infty}\frac{1}{n}\log f_n\left(\frac{1-\varepsilon}\varepsilon\right)=\lim_{n\to\infty}\int \log\left(1+\frac{1-\varepsilon}\varepsilon x^2\right) \rho_n(dx)=\int \log\left(1+\frac{1-\varepsilon}\varepsilon x^2\right) \nu(dx).\]

(A small technical problem is that the integrand is not bounded. But it grows very slowly, and we have a uniform bound on the second moment of $\rho_n$, thus, the convergence above indeed holds. See Section~\ref{secsectechdetails} for the technical details.)

Thus,
\[\lim_{n\to\infty}\frac{\log Z_\varepsilon(\mathcal{T}_n)}n=\log(\varepsilon)+\int \log\left(1+\frac{1-\varepsilon}\varepsilon x^2\right) \nu(dx)=\int\log(\varepsilon+(1-\varepsilon)x) \eta_k(dx).\]

Finally,~\eqref{zetalimit} follows by applying the monotone convergence theorem separately to the negatives of the integrands on $(0,1)$ and to the integrands on $[1,\infty)$.
\end{proof}

Let $\mathcal{T}_n$ be drawn from the measure $\mathbb{P}^{H_n}$. Then combining Theorem~\ref{thm1}, Theorem~\ref{thmNachmiasPeled}, Lemma~\ref{lemmaZlimit} and~\eqref{ranklognormalization}, we see that there is an event $\mathcal{A}_n$ such that $\mathbb{P}^{H_n}(\mathcal{A}_n)\to 1$ and on the event $\mathcal{A}_n$
\[\left|\frac{\log Z_\varepsilon(\mathcal{T}_n)}n-\zeta_\varepsilon\right|<\varepsilon \text{ and }\left|\frac{\log\mathbb{P}^{H_n}(\mathcal{T}_n)}n+\log(d)-2c_k\right|<\frac{\varepsilon}2.\]

For all sufficiently large $n$, we have $\mathbb{P}^{H_n}(\mathcal{A}_n)\geq\exp(-\varepsilon n/2)$. For such $n$, let $\mathcal{T}_n'$ be distributed as $\mathcal{T}_n$ conditioned on $\mathcal{A}_n$. Then almost surely,
\[Z_\varepsilon(\mathcal{T}_n')\ge \exp(n(\zeta_\varepsilon-\varepsilon)).\]
Conditioning divides point probabilities by $\mathbb{P}^{H_n}(\mathcal{A}_n)$, so $\mathcal{T}_n'$ takes any given value with probability at most $d^{-n}\exp(n(2c_k+\varepsilon))$.

Let the matching $M_n$ be drawn from $\mathbb{P}^{\mathcal{T}_n'}$, and let $Y_n$ be the corresponding element of $\mathcal{F}_n$. Clearly, $|M_n|\le r$.

Note that $X_n$ and $Y_n$ both depend on $\varepsilon$, but to ease the notation, we do not indicate this dependence in the notation.

For all sufficiently large $n$, $m=nd/(k+1)\geq2n$. For $a>0$, the expressions $x\log(a/x)$ and $(a/x)^x$ are assigned their continuous values $0$ and $1$, respectively, at $x=0$. Now let $M$ be a fixed matching of $\mathfrak{G}_n$ with at most $r$ edges and let $F_M$ be the corresponding element of $\mathcal{F}_n$. Let us estimate the probability of the event that $Y_n=F_M$. Given that $Y_n=F_M$ the number of possibilities for the value of $\mathcal{T}_n'$ is at most
\[{{m}\choose{r-|M|}}\le {{m}\choose{n-|M|}}\le \left(\frac{em}{n-|M|}\right)^{n-|M|}.\]
Therefore,
\[\mathbb{P}(Y_n=F_M)\le \left(\frac{em}{n-|M|}\right)^{n-|M|} d^{-n}\exp(n(2c_k+\varepsilon))\exp(-n(\zeta_\varepsilon-\varepsilon))(1-\varepsilon)^{|M|}\varepsilon^{n-|M|}.\]
Also,
\[\mathbb{P}(X_n=F_M)=\frac{(1-\varepsilon)^{|M|}\varepsilon^{n-|M|}}{ d^{|M|}}.\]

Therefore,
\begin{align*}\log\frac{\mathbb{P}(Y_n=F_M)}{\mathbb{P}(X_n=F_M)}&\le \log\left(\left(\frac{em}{d(n-|M|)}\right)^{n-|M|} \exp(n(2c_k+\varepsilon))\exp(-n(\zeta_\varepsilon-\varepsilon))\right)\\&=(n-|M|)\log\left(\frac{en}{(k+1)(n-|M|)}\right)+2\varepsilon n+(2c_k-\zeta_\varepsilon)n\end{align*}

Thus,
\begin{equation}\label{dklbound}\frac{\dkl(Y_n,X_n)}n\le 2\varepsilon+2c_k-\zeta_\varepsilon+\mathbb{E} Q_n\log\left(\frac{e}{(k+1)Q_n} \right),\end{equation}
where $Q_n=\frac{n-|M_n|}n$.

Let $0<\eta<1$.

Each vertex in $V(\mathfrak{G}_n[\mathcal{T}_n'])\cap U_n$ has degree $k+1$. Thus, it follows easily that $\mathfrak{G}_n[\mathcal{T}_n']$ has at most $(k+2)^r$ matchings. If $\mathcal{T}_n'$ appears with positive probability then $B_n[\mathcal{T}_n']$ has an $r\times r$ submatrix with non-zero determinant, so $\mathfrak{G}_n[\mathcal{T}_n']$ must have a matching of size $r$ by the Leibniz formula for determinants. Thus, $Z_\varepsilon(\mathcal{T}_n')\ge (1-\varepsilon)^r\varepsilon^{n-r}$. 

Therefore, we have
\[\mathbb{P}(Q_n>\eta)\le \frac{(k+2)^r(1-\varepsilon)^{n-\eta n}\varepsilon^{\eta n}}{Z_\varepsilon(\mathcal{T}_n')}\le (2(k+2))^r \varepsilon^{\eta n +r-n}\le (2(k+2))^r \varepsilon^{\frac{\eta}2 r}\]
provided that $n$ is large enough.

Thus, $\mathbb{P}(Q_n>\eta)$ tends to $0$ if $\varepsilon<(2(k+2))^{-\frac{2}{\eta}}$. Thus,
\begin{align}\lim_{\varepsilon\to 0}\limsup_{n\to\infty}\mathbb{E} Q_n\log\left(\frac{e}{(k+1) Q_n}\right)&=0\text{ and }\label{QlogQ}\\\lim_{\varepsilon\to 0}\limsup_{n\to\infty}\mathbb{E} Q_n&=0\label{limEQn}.
\end{align}

Therefore, combining \eqref{dklbound}, \eqref{zetalimit} and \eqref{QlogQ}, it follows that
\[\lim_{\varepsilon\to 0}\limsup_{n\to\infty}\frac{\dkl(Y_n,X_n)}n=0.\]

Thus, by Lemma~\ref{lemmaMarton}, there is a coupling of $X_n$ and $Y_n$ such that
\begin{equation}\label{XvsY}\lim_{\varepsilon\to 0}\limsup_{n\to\infty}\frac{1}n \sum_{v\in V_n}\mathbb{P}(X_n(v)\neq Y_n(v))=0.\end{equation}

Let us recall that the definition of $Y_n$ also provides us a coupling of $Y_n$ and $\mathcal{T}_n'$. Moreover, we also defined a joint distribution of $X_n$ and $\mathcal{T}_n^{1o}$. Now these couplings can be combined into a single coupling $(\mathcal{T}_n^{1o},X_n,Y_n,\mathcal{T}_n')$ such that all the pairs $(\mathcal{T}_n^{1o},X_n)$, $(X_n,Y_n)$ and $(Y_n,\mathcal{T}_n')$ have the above-mentioned joint distributions. Then under this coupling
\[\mathbb{E}\frac{|\mathcal{T}_n^{1o}\Delta \mathcal{T}_n'|}n\le \mathbb{E} \frac{|\mathcal{T}_n^{1o}\Delta \{X_n(v)\,:\,v\in V_n,\,X_n(v)\neq*\}|}n+\frac{2}n \sum_{v\in V_n}\mathbb{P}(X_n(v)\neq Y_n(v))+\mathbb{E}Q_n.\]
Here the right hand side can be made arbitrarily small by choosing $\varepsilon$ small enough and $n$ large enough, as we see by combining \eqref{ToutvsX}, \eqref{XvsY} and \eqref{limEQn}.

Finally, on $\mathcal{A}_n$ use the constructed coupling of $(\mathcal{T}_n',\mathcal{T}_n^{1o})$. On $\mathcal{A}_n^c$, draw $\mathcal{T}_n$ from its original conditional law and couple it independently with the usual $1$-out law. This preserves both marginals. Since $|\mathcal{T}_n|=r\leq n$ and $|\mathcal{T}_n^{1o}|\leq n$, the contribution from $\mathcal{A}_n^c$ to the normalized expected symmetric difference is at most $2\mathbb{P}(\mathcal{A}_n^c)\to0$. Thus
\[\mathbb{E}\frac{|\mathcal{T}_n^{1o}\Delta\mathcal{T}_n|}n\]
can be made arbitrarily small by choosing $\varepsilon$ small enough and then $n$ large enough. A diagonal choice $\varepsilon=\varepsilon_n\to0$ gives the sequence of couplings asserted in Theorem~\ref{thmgeneralcoupling}. For the remaining finitely many indices, choose any coupling.

\subsection{Technical details of the proof of Lemma~\ref{lemmaZlimit}}\label{secsectechdetails}

Let $G_n=\mathfrak{G}_n[\mathcal{T}_n]$. 
First note that, by Lemma~\ref{lemmadisintegrate} and the fact that $r=n-o(n)$, we have
\begin{align*}\int t^2 \rho_n(dt)&=(1+o(1))\frac{1}n\sum_{v\in V_n}\int t^2 \nu_{(T_v(G_n),\bar{v})}(dt)\\&= (1+o(1))\frac{1}n\sum_{v\in V_n} \langle A_{T_v(G_n)}^2\chi_{\bar{v}},\chi_{\bar{v}}\rangle\\&=(1+o(1))\frac{1}n\sum_{v\in V_n}\deg_{G_n}(v)\\&=(1+o(1))\frac{|E(G_n)|}n\\&=(1+o(1))(k+1)<k+2 \end{align*}
for all large enough $n$, which we will assume from now on. 

Although the rest of the proof follows closely the second part of the proof of Lemma~\ref{logdetgeneral}, for the reader's convenience, we provide the details.

Let $\mu_n$ be the pushforward of $\rho_n$ by the map $t\mapsto t^2$. Then the measure $\mu_n$ is supported on $[0,\infty)$, $\int t\, \mu_n(dt)<k+2$ and $\mu_n$ converges weakly to $\eta_k$. Moreover, \[\int \log\left(1+\frac{1-\varepsilon}\varepsilon x^2\right)\,\rho_n(dx)=\int_{[0,\infty)} g(t) \,\mu_n(dt),\] 
where $g(t)$ is defined on $[0,\infty)$ as \[g(t)=\log\left(1+\frac{1-\varepsilon}\varepsilon t\right).\]

Given $\omega>0$, let us consider the function $g_\omega:[0,\infty)\to\mathbb{R}$ defined as follows:
\[g_\omega(x)=\begin{cases}
g(x)&\text{ if }x\le \omega,\\
g(\omega)&\text{ if }x>\omega.
\end{cases}\]

Then, by Jensen's inequality, we have
\begin{align*}\left|\int g_\omega(t)\,\mu_n(dt)-\int g(t)\,\mu_n(dt) \right|&\le \int_{(\omega,\infty)}g(t)\,\mu_n(dt)\\&\le \mu_n((\omega,\infty))g\left(\frac{\int_{(\omega,\infty)} t \mu_n(dt)}{\mu_n((\omega,\infty))}\right)\\&\le \mu_n((\omega,\infty))g\left(\frac{k+2}{\mu_n((\omega,\infty))}\right). 
\end{align*}

By Markov's inequality, $\mu_n((\omega,\infty))\le \frac{k+2}{\omega}$. The function $x\mapsto x g\left(\frac{k+2}{x}\right)$ is increasing on the interval $[0,\infty)$. Thus, 

\[\left|\int_{[0,\infty)} g_\omega(t)\,\mu_n(dt)-\int_{[0,\infty)} g(t)\,\mu_n(dt) \right|\le \frac{k+2}{\omega} g(\omega).\]

Since $g_\omega$ is a bounded continuous function and $\mu_n$ converges weakly to $\eta_k$, we have
\[\limsup_{n\to \infty}\left|\int g_\omega(t)\,\mu_n(dt)-\int g_\omega(t)\,\eta_k(dt) \right|=0.\]

Thus,
\[\limsup_{n\to \infty}\left|\int g(t)\,\mu_n(dt)-\int g_\omega\,\eta_k(dt) \right|\le \frac{k+2}{\omega}g(\omega).\]
Since $\frac{k+2}{\omega}g(\omega)\to 0$ as $\omega\to\infty$, it follows that
\[\lim_{n\to\infty} \int g(t)\,\mu_n(dt)=\lim_{\omega\to \infty} \int g_\omega(t)\,\eta_k(dt)=\int g(t)\,\eta_k(dt)=\int \log\left(1+\frac{1-\varepsilon}\varepsilon x^2\right) \nu(dx).\]

\medskip

\section{Corollaries of Theorem~\ref{thm1}}\label{SecAEPCorollaries}

The following lemma will be useful in the proofs below.

\begin{lemma}\label{lemmaBTBvsP}
For any $r$-element subset $\mathcal{T}\subset U$, we have
\[\det(B^TB\restriction \mathcal{T})=\mathbb{P}^H(\mathcal{T}){\det}^+(BB^T).\]

\end{lemma}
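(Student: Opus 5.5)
We want $\det(B^TB\restriction\mathcal{T})=\det(P_H\restriction\mathcal{T})\,{\det}^+(BB^T)$, where ${\det}^+$ denotes the product of the nonzero eigenvalues. The idea is to factor $B^TB$ through an orthonormal basis of $H$ and then apply Cauchy--Binet on the $\mathcal{T}$-rows.

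First we set up the factorization, reusing the spectral decomposition from the proof of Lemma~\ref{psietaconv}. Let $\lambda_1,\dots,\lambda_r$ be the nonzero eigenvalues of $\frac1dB^TB$, with orthonormal eigenvectors $g_1,\dots,g_r$ forming a basis of $H$. Let $G$ be the $m\times r$ matrix with columns $g_i$, and let $\Lambda=\mathrm{diag}(d\lambda_1,\dots,d\lambda_r)$. Then
\[
B^TB=G\Lambda G^T\quad\text{and}\quad P_H=GG^T .
\]
Write $G_{\mathcal{T}}$ for the $r\times r$ submatrix of $G$ formed by the rows indexed by $\mathcal{T}$. Since $|\mathcal{T}|=r$, this matrix is square.

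Next we compute both principal minors. Restricting to the rows and columns indexed by $\mathcal{T}$ gives
\[
B^TB\restriction\mathcal{T}=G_{\mathcal{T}}\Lambda G_{\mathcal{T}}^T
\quad\text{and}\quad
P_H\restriction\mathcal{T}=G_{\mathcal{T}}G_{\mathcal{T}}^T .
\]
Because all the factors are square, multiplicativity of the determinant yields
\[
\det(B^TB\restriction\mathcal{T})=\det(G_{\mathcal{T}})^2\prod_{i=1}^r d\lambda_i
\quad\text{and}\quad
\det(P_H\restriction\mathcal{T})=\det(G_{\mathcal{T}})^2 .
\]

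Finally we identify the product of eigenvalues. The nonzero eigenvalues of $B^TB$ are the $d\lambda_i$. Since $B^TB$ and $BB^T$ have the same nonzero eigenvalues with the same multiplicities, $\prod_{i=1}^r d\lambda_i={\det}^+(BB^T)$. Combining this with the two identities above and with $\mathbb{P}^H(\mathcal{T})=\det(P_H\restriction\mathcal{T})$ proves the lemma.

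No step is a real obstacle. The only point needing care is that ${\det}^+$ means the product of the nonzero eigenvalues counted with multiplicity, which matches the $r=\dim H=\operatorname{rank}(B)$ eigenvalues $d\lambda_i$. If $\mathcal{T}$ is not a basis, then $\det G_{\mathcal{T}}=0$ and both sides vanish, so the identity still holds.
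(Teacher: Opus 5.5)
Your proof is correct, and it takes a somewhat different and shorter route than the paper's.

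\textbf{The paper's route.} It writes $B=SC$, where the rows of $C$ form an arbitrary orthonormal basis of $H$. It expands $\det(B^TB\restriction\mathcal{T})=\sum_{X}(\det B[X,\mathcal{T}])^2$ over $r$-subsets $X\subset V$ by Cauchy--Binet, and factors each term as $(\det S[X,*])^2(\det C[*,\mathcal{T}])^2$. It then identifies $\sum_X\det\bigl((BB^T)[X,X]\bigr)$ with ${\det}^+(BB^T)$, using that the sum of the $r\times r$ principal minors of a rank-$r$ matrix is the product of its nonzero eigenvalues.

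\textbf{Your route.} You stay on the $U$ side and use the eigenbasis of $B^TB$. This is the paper's factorization with the special choice $C=G^T$, $S=BG$ (note $SC=BP_H=B$), for which $S^TS=G^TB^TBG=\Lambda$ is diagonal. With that choice the restricted matrix $G_{\mathcal{T}}\Lambda G_{\mathcal{T}}^T$ is a product of square matrices. Plain multiplicativity of $\det$ then replaces both the Cauchy--Binet expansion and the principal-minor identity. The quantity ${\det}^+(BB^T)$ appears directly as $\det\Lambda$, via the shared nonzero spectrum of $B^TB$ and $BB^T$. Despite your opening sentence, you never actually use Cauchy--Binet.

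\textbf{Comparison.} The paper's version works with any orthonormal basis of $H$ and makes the link to the $r\times r$ minors $\det B[X,\mathcal{T}]$ of $B$ explicit. Yours is more economical and avoids the minor-sum identity for ${\det}^+$.
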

\begin{proof}
Let $C$ be an $r\times m$ matrix such that the rows of $C$ form an orthonormal basis of $H$. Then we can find an $n\times r$ matrix $S$ such that $B=SC$. As the rows of $B$ are indexed by $V$, we may also assume that the rows of $S$ are indexed by $V$. Given a matrix $A$, a subset $X$ of the rows and a subset $Y$ of the columns, the corresponding submatrix is denoted by $A[X,Y]$. If $X$ is the set of all the rows, we will sometimes use the notation $A[*,Y]$. If $Y$ is the set of all the columns, we will sometimes use the notation $A[X,*]$. Let $V_r$ be the family of $r$-element subsets of $V$.

By applying the Cauchy-Binet formula, we see that
\begin{align*}\det(B^TB\restriction \mathcal{T})&=\sum_{X\in V_r} (\det B[X,\mathcal{T}])^2\\&=\sum_{X\in V_r} (\det S[X,*] \det C[*,\mathcal{T}])^2\\&=(\det C[*,\mathcal{T}])^2\sum_{X\in V_r}\det ((SS^T)[X,X]).\end{align*}

Here
\[(\det C[*,\mathcal{T}])^2=\det\big((C[*,\mathcal{T}])^T C[*,\mathcal{T}]\big)=\det P_H\restriction \mathcal{T}=\mathbb{P}^H(\mathcal{T}).\]

Moreover, noting that $CC^T=I$, we have
\[\sum_{X\in V_r}\det ((SS^T)[X,X])=\sum_{X\in V_r}\det (SCC^TS^T)[X,X]=\sum_{X\in V_r} \det BB^T[X,X]={\det}^+(BB^T)\]
by the fact that $BB^T$ has rank $r$. Thus, the statement follows.
\end{proof}

\subsection{Proof of Corollary~\ref{CorNumberOfTrees}}

Let $B_n$ be a signed incidence matrix of $G_n$. The determinantal process associated with its row space is a uniform spanning tree, so $\mathbb{P}^{H_n}(\mathcal{T}_n)=1/\tau(G_n)$. Theorem~\ref{thm1} gives
\[
    -\frac{\log\tau(G_n)}n+\log d\longrightarrow2c_1\,.
\]
Using Cayley's formula for the complete graph shows that $c_1=0$. Thus the statement follows.

\subsection{The proof of Corollary~\ref{corEntropy}}
We start with the following corollary of Lemma~\ref{lemmaBTBvsP}.
\begin{lemma}\label{bounded}
Let $\mathcal{T}$ and $\mathcal{T}'$ be two $r$-element subsets of $U$, such that $\mathbb{P}^H(\mathcal{T})$ and $\mathbb{P}^H(\mathcal{T}')$ are both positive. Then
\[\left|\frac{\log\mathbb{P}^H(\mathcal{T})}r-\frac{\log\mathbb{P}^H(\mathcal{T}')}r\right|\le \log(k+1).\]
\end{lemma}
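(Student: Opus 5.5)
By Lemma~\ref{lemmaBTBvsP}, for any $r$-element $\mathcal{T}$ with $\mathbb{P}^H(\mathcal{T})>0$ we have $\log\mathbb{P}^H(\mathcal{T})=\log\det(B^TB\restriction\mathcal{T})-\log{\det}^+(BB^T)$. The second term does not depend on $\mathcal{T}$, so it cancels in the difference. The statement therefore reduces to showing that $\frac1r\log\det(B^TB\restriction\mathcal{T})$ lies in an interval of length at most $\log(k+1)$ whenever $\mathbb{P}^H(\mathcal{T})>0$. I would prove the two-sided bound
\[
0\le \log\det(B^TB\restriction\mathcal{T})\le r\log(k+1).
\]

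For the upper bound, $B^TB\restriction\mathcal{T}$ is positive semidefinite and every diagonal entry equals $k+1$, since each $u\in U$ has degree $k+1$ and the nonzero entries are $\pm1$. Hadamard's inequality then gives $\det(B^TB\restriction\mathcal{T})\le (k+1)^r$. Equivalently, AM--GM applied to the eigenvalues with trace $r(k+1)$ gives the same bound.

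For the lower bound, I would use Cauchy--Binet as in the proof of Lemma~\ref{lemmaBTBvsP}: $\det(B^TB\restriction\mathcal{T})=\sum_{X\in V_r}(\det B[X,\mathcal{T}])^2$. Since $B$ is an integer matrix, each term is a nonnegative integer. Positivity of $\mathbb{P}^H(\mathcal{T})$ forces $\det(B^TB\restriction\mathcal{T})>0$ by Lemma~\ref{lemmaBTBvsP}, because ${\det}^+(BB^T)>0$. Hence at least one term is a positive integer, and so $\det(B^TB\restriction\mathcal{T})\ge1$.

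Combining the two bounds, both $\frac1r\log\mathbb{P}^H(\mathcal{T})$ and $\frac1r\log\mathbb{P}^H(\mathcal{T}')$ lie in the interval $\bigl[-\frac1r\log{\det}^+(BB^T),\ \log(k+1)-\frac1r\log{\det}^+(BB^T)\bigr]$, which has length $\log(k+1)$. This is the claimed inequality. There is no real obstacle here; the only step that needs care is the integrality argument for the lower bound, which is where positivity of the probabilities is used.
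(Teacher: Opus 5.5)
Your proposal is correct and follows the paper's proof: you cancel ${\det}^+(BB^T)$ via Lemma~\ref{lemmaBTBvsP}, then bound $\det(B^TB\restriction\mathcal{T})$ above by $(k+1)^r$ with Hadamard's inequality and below by $1$ using integrality. Your explicit remark that $\mathbb{P}^H(\mathcal{T})>0$ and ${\det}^+(BB^T)>0$ are what force this integer determinant to be at least $1$ is a point the paper leaves implicit.
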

\begin{proof}
    By Lemma~\ref{lemmaBTBvsP}, we have
    \[\frac{\mathbb{P}^H(\mathcal{T})}{\mathbb{P}^H(\mathcal{T}')}=\frac{\det(B^TB\restriction \mathcal{T})}{\det(B^TB\restriction \mathcal{T}')}.\]
    The matrix $B^TB\restriction \mathcal{T}$ has integer entries, so $\det(B^TB\restriction \mathcal{T})\ge 1$. Moreover, all the diagonal entries of $B^TB\restriction \mathcal{T}$ are equal to $k+1$, so by Hadamard's inequality $\det(B^TB\restriction \mathcal{T})\le (k+1)^r$. The same is true for $\mathcal{T}'$ in place of~$\mathcal{T}$. So
    \[(k+1)^{-r}\le \frac{\mathbb{P}^H(\mathcal{T})}{\mathbb{P}^H(\mathcal{T}')}\le (k+1)^r.\]
    Taking logarithms, the statement follows.
\end{proof}

Let $\mathcal{T}_n$ be drawn from the measure $\mathbb{P}^{H_n}$. Let us consider the coupling of $\mathcal{T}_1,\mathcal{T}_2,\dots$ provided by Theorem~\ref{thmNachmiasPeled}. Let
\[X_n=-\frac{\log \mathbb{P}^{H_n}(\mathcal{T}_n)}{n}-\log(d_n)+2c_k.\]
In Section~\ref{proofofthm1}, we proved that $X_n$ converges to $0$ almost surely. For all sufficiently large $n$, its support therefore contains a value in $[-1,1]$. By Lemma~\ref{bounded} and $r\leq n$, the diameter of this support is at most $\log(k+1)$. Hence $|X_n|\leq1+\log(k+1)$ almost surely for all sufficiently large $n$. The dominated convergence theorem gives
\[\lim_{n\to\infty}\mathbb{E}X_n=0.\]
The statement follows from
\[\mathbb{E}X_n=\frac{H(\mathcal{T}_n)}n-\log(d_n)+2c_k.\]

\subsection{Gram determinants}

The following corollary will be useful in our study of homology torsion growth in the next section. Given a square matrix $A$, let ${\det}^+(A)$ denote the product of its nonzero eigenvalues.

\begin{corollary}\label{thmBTB}
    In the setup of Theorem~\ref{thm1}, assume that
    \begin{equation}\label{niceB}
        \lim_{j\to\infty}\frac{\log{\det}^+(d^{-1}B_jB_j^T)}{r}=0\,.
    \end{equation}
    Then, as $j\to\infty$,
    \[
        \frac{\log\det(B_j^TB_j\restriction\mathcal{T}_j)}{r}
        \text{ converges to }2c_k\text{ in probability.}
    \]
    Moreover, let $\mu_j$ be the uniform probability measure on the nonzero eigenvalues of $d^{-1}B_jB_j^T$, counted with multiplicity. Then~\eqref{niceB} is implied by the uniform integrability condition
    \begin{equation}\label{niceB2}
        \lim_{\delta\downarrow0}\limsup_{j\to\infty}
        \int_{(0,\delta)}|\log t|\,\mu_j(dt)=0\,.
    \end{equation}
\end{corollary}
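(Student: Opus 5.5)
The plan is to combine Lemma~\ref{lemmaBTBvsP} with Theorem~\ref{thm1}, using the rank normalization from \eqref{AEPrank}, and then to derive \eqref{niceB} from \eqref{niceB2} via Lemma~\ref{logdetgeneral}.

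\textbf{Step 1: reduce to Theorem~\ref{thm1}.} By Lemma~\ref{lemmaBTBvsP}, every $r$-set $\mathcal{T}_j$ in the support satisfies
\[
\log\det(B_j^TB_j\restriction\mathcal{T}_j)=\log\mathbb{P}^{H_j}(\mathcal{T}_j)+\log{\det}^+(B_jB_j^T).
\]
Since $B_jB_j^T$ has exactly $r$ nonzero eigenvalues, we have ${\det}^+(B_jB_j^T)=d^r{\det}^+(d^{-1}B_jB_j^T)$. Dividing by $r$ gives
\[
\frac{\log\det(B_j^TB_j\restriction\mathcal{T}_j)}{r}=\left(\frac{\log\mathbb{P}^{H_j}(\mathcal{T}_j)}{r}+\log d\right)+\frac{\log{\det}^+(d^{-1}B_jB_j^T)}{r}.
\]
The first bracket tends to $2c_k$. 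This holds almost surely under the coupling of Theorem~\ref{thmNachmiasPeled}, by \eqref{AEPrank}, and hence in probability for each marginal law. The second term tends to $0$ by assumption \eqref{niceB}. This proves the first claim.

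\textbf{Step 2: from \eqref{niceB2} to \eqref{niceB}.} Note that
\[
\frac{1}{r}\log{\det}^+(d^{-1}B_jB_j^T)=\int_{(0,\infty)}\log t\,\mu_j(dt),
\]
so we would like to apply Lemma~\ref{logdetgeneral} with the limit $\mu=\delta_1$, which has $\int\log t\,d\mu=0$. We check its three conditions.
\begin{itemize}
\item Condition \eqref{condb} is exactly \eqref{niceB2}.
\item For condition \eqref{condc}, $\int t\,\mu_j(dt)=\frac{1}{rd}\Tr(BB^T)=\frac{nd}{rd}=\frac{n}{r}$. This is bounded by \eqref{rvsn}, using that every row of $B$ has $d$ entries equal to $\pm1$.
\item For condition \eqref{conda}, the nonzero eigenvalues of $d^{-1}BB^T$ coincide with the $\lambda_i$ of $d^{-1}B^TB$. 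By \eqref{Ibound}, with any fixed $\varepsilon$, all but a fraction at most $k(\varepsilon d)^{-1}n/r\to0$ of them satisfy $(\lambda_i-1)^2\le\varepsilon$. Hence $\mu_j\to\delta_1$ weakly.
\end{itemize}
Lemma~\ref{logdetgeneral} then yields $\int\log t\,d\mu_j\to0$, which is \eqref{niceB}.

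The main thing to get right is the weak convergence $\mu_j\to\delta_1$ in Step 2. It rests on reading \eqref{Ibound} as an eigenvalue-concentration statement and on identifying the nonzero spectra of $BB^T$ and $B^TB$. Everything else is bookkeeping with Lemma~\ref{lemmaBTBvsP} and the normalization by $r$ rather than $n$.
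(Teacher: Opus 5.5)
Your proposal is correct and follows the paper's proof essentially step for step. Both rewrite $\log\det(B_j^TB_j\restriction\mathcal{T}_j)/r$ via Lemma~\ref{lemmaBTBvsP} and conclude with \eqref{AEPrank}; both then apply Lemma~\ref{logdetgeneral} with limit $\delta_1$, using the trace identity $\int t\,\mu_j(dt)=n/r$ and the eigenvalue concentration from \eqref{Ibound}. Your explicit derivation of $\mu_j\to\delta_1$ from \eqref{Ibound} just spells out what the paper cites as ``shown in the proof of Theorem~\ref{thm1}''.
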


In particular, condition~\eqref{niceB2} holds if the smallest nonzero eigenvalue of $d^{-1}B_jB_j^T$ is bounded away from $0$ uniformly in $j$.

\begin{proof}
By Lemma~\ref{lemmaBTBvsP},
\[
    \frac{\log\det(B_j^TB_j\restriction\mathcal{T}_j)}{r}
    =\frac{\log\mathbb{P}^{H_j}(\mathcal{T}_j)}{r}+\log d
    +\frac{\log{\det}^+(d^{-1}B_jB_j^T)}{r}\,.
\]
The first two terms on the right converge to $2c_k$ in probability by~\eqref{AEPrank}, and the last term tends to $0$ by~\eqref{niceB}. This proves the first statement.

As shown in the proof of Theorem~\ref{thm1}, the measures $\mu_j$ converge weakly to $\delta_1$, and $r/|V_j|\to1$. Since each diagonal entry of $B_jB_j^T$ equals $d$, we also have
\[
    \int t\,\mu_j(dt)=\frac{\Tr(d^{-1}B_jB_j^T)}{r}
    =\frac{|V_j|}{r}\longrightarrow1\,.
\]
Combining these observations with~\eqref{niceB2}, Lemma~\ref{logdetgeneral} gives
\[
    \frac{\log{\det}^+(d^{-1}B_jB_j^T)}{r}
    =\int\log t\,\mu_j(dt)\longrightarrow0\,.
\]
Thus~\eqref{niceB2} implies~\eqref{niceB}.
\end{proof}

\section{Homology torsion growth}\label{SecTop}

We first prove Theorem~\ref{thmBTB2} in the polytopal setting of Section~\ref{SecTorsionIntroduction}. We then use the cellular matrix-tree theorem to give spectral sufficient conditions for the normalization assumption~\eqref{nicetau}.

\subsection{Proof of Theorem~\ref{thmBTB2}}

Set $n_j=f_{\ell-1}(\Delta_j)$. The boundary matrices satisfy the setup of Theorem~\ref{thm1}, with $|V_j|=n_j$. Hence
\[
    \frac{\log\mathbb{P}^{H_j}(\mathcal{T}_j)}{n_j}+\log d_j
    \text{ converges to }2c_k\text{ in probability.}
\]
Recall that $t_{\ell-1}(S)=|\tors H_{\ell-1}(S,\mathbb{Z})|$. Identifying a hyperforest with its set of $\ell$-cells, the determinantal weight formula gives
\[
    \frac{2\log t_{\ell-1}(\mathcal{T}_j)}{n_j}
    =\left(\frac{\log\mathbb{P}^{H_j}(\mathcal{T}_j)}{n_j}+\log d_j\right)
    +\left(\frac{\log\tau_\ell(\Delta_j)}{n_j}-\log d_j\right)\,.
\]
The first parenthesized term converges to $2c_k$ in probability, and the second tends to $0$ by~\eqref{nicetau}. This proves the theorem.

\subsection{Spectral sufficient conditions}

An $\ell$-dimensional complex $\Delta$ is called \emph{acyclic in positive codimension}, or APC, if $\widetilde H_i(\Delta,\mathbb{Q})=0$ for every $i<\ell$. In this case, a spanning $\ell$-hyperforest has finite reduced homology in dimension $\ell-1$ and is also called a \emph{cellular spanning tree}. If $\Delta$ is an APC, then for $1\leq p\leq\ell$ and any the spanning $p$-hyperforests $S$ of the $p$-skeleton of~$\Delta$, we have $\tors \widetilde H_{p-1}(S,\mathbb{Z})=\widetilde H_{p-1}(S,\mathbb{Z})$. Thus, for all $1\le p\le \ell$, $\tau_p(\Delta)$ is the sum of $|\widetilde H_{p-1}(S,\mathbb{Z})|^2$ over the spanning $p$-hyperforests of the $p$-skeleton of $\Delta$.

Set $n_j=f_{\ell-1}(\Delta_j)$ and $r_j=\rang B_j$. By \eqref{ranklognormalization},
\begin{equation}\label{ranklognormalization2}
    \frac{r_j}{n_j}\longrightarrow1,
    \qquad
    \left(1-\frac{r_j}{n_j}\right)\log d_j\longrightarrow0\,.
\end{equation}
\begin{lemma}\label{matrixtreeequivalence}
    In the polytopal setup of Theorem~\ref{thmBTB2}, assume that $\Delta_j$ is APC and that
    \[
        \frac{\log f_{\ell-1}(\Delta_j)}{d_j}\longrightarrow0\,.
    \]
    Then
    \begin{equation}\label{topologicalcomparison}
        \lim_{j\to\infty}\frac{1}{f_{\ell-1}(\Delta_j)}
        \log\left(\frac{{\det}^+(B_jB_j^T)}{\tau_\ell(\Delta_j)}\right)=0\,.
    \end{equation}
    Consequently, the normalization conditions~\eqref{nicetau} and~\eqref{niceB} are equivalent.
\end{lemma}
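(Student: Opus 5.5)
The plan is to use the cellular matrix-tree theorem for APC complexes (Duval--Klivans--Martin). For an APC complex, $\det^+$ of the up-down Laplacian $L_{\ell-1}^{\mathrm{ud}}=\partial_\ell\partial_\ell^T=B_jB_j^T$ on $(\ell-1)$-chains can be written as
\[
{\det}^+(\partial_\ell\partial_\ell^T)=\frac{\tau_\ell(\Delta)\,\tau_{\ell-1}(\Delta)}{|\widetilde H_{\ell-2}(\Delta,\mathbb{Z})|^2}.
\]
This follows by iterating the weighted matrix-tree identity $\det^+(\partial_p\partial_p^T)=\tau_p\tau_{p-1}/|\widetilde H_{p-2}|^2$ (up to torsion correction factors, which vanish in the APC setting). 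In particular $\tau_{\ell-1}$ relates to $\det^+$ of the lower Laplacian. So the ratio ${\det}^+(B_jB_j^T)/\tau_\ell(\Delta_j)$ equals $\tau_{\ell-1}(\Delta_j)/|\widetilde H_{\ell-2}(\Delta_j,\mathbb{Z})|^2$. Since $\widetilde H_{\ell-2}(\Delta_j,\mathbb{Z})$ is finite in the APC case, this is at most $\tau_{\ell-1}(\Delta_j)$. It is also at least $1$: take a hyperforest $S_0$ of the $(\ell-1)$-skeleton attaining $t_{\ell-2}(S_0)\ge|\widetilde H_{\ell-2}(\Delta)|$, using the standard inequality that the torsion of a spanning forest is divisible by the torsion of the complex. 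Then the ratio is at least $1$, so its log is nonnegative.

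It therefore remains to show $\log\tau_{\ell-1}(\Delta_j)=o(f_{\ell-1}(\Delta_j))$; this is the main obstacle. I would bound $\tau_{\ell-1}$ by ${\det}^+(\partial_{\ell-1}\partial_{\ell-1}^T)$ up to the (nonnegative-log) torsion correction, i.e. $\tau_{\ell-1}\le {\det}^+(\partial_{\ell-1}^T\partial_{\ell-1})$ on $(\ell-1)$-chains. Its nonzero eigenvalues coincide with the nonzero eigenvalues of the down Laplacian on $(\ell-1)$-cells, whose rank is at most $f_{\ell-1}-r_j=n_j-r_j$. This is because $\ker\partial_{\ell-1}\supseteq\operatorname{im}\partial_\ell$ has dimension at least $r_j$, and by APC the two are equal rationally. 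Each eigenvalue is at most the maximal row sum of $|\partial_{\ell-1}^T\partial_{\ell-1}|$. That row sum is bounded by $(\#\text{facets of an }(\ell-1)\text{-cell})\times(\text{max number of }(\ell-1)\text{-cells on an }(\ell-2)\text{-cell})$, which is polynomial in $f_{\ell-1}$. Polytopality bounds the first factor, and the second is at most $f_{\ell-1}$. Hence $\log\tau_{\ell-1}\le (n_j-r_j)\cdot O(\log f_{\ell-1})$. (If the facet count of $(\ell-1)$-cells is not bounded, use that it is at most $f_{\ell-2}\le C f_{\ell-1}$.)

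By \eqref{ranklognormalization}, $n_j-r_j\le 2kn_j/d_j$. Thus $\log\tau_{\ell-1}/n_j\le O(k\log f_{\ell-1}/d_j)\to0$ by the hypothesis $\log f_{\ell-1}(\Delta_j)/d_j\to0$. This gives \eqref{topologicalcomparison}.

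For the consequence, subtract: the quantity $\log\tau_\ell/n_j-\log d_j$ differs from $\log{\det}^+(d^{-1}B_jB_j^T)/n_j=\log{\det}^+(B_jB_j^T)/n_j-(r_j/n_j)\log d_j$ by $o(1)+(1-r_j/n_j)\log d_j$. This is $o(1)$ by \eqref{ranklognormalization2}. Finally, the normalizations by $r_j$ and $n_j$ agree in the limit since $r_j/n_j\to1$, so \eqref{nicetau} and \eqref{niceB} are equivalent. The delicate points are getting the exact torsion bookkeeping in the matrix-tree theorem right (only one-sided inequalities are needed) and the crude eigenvalue bound for the lower Laplacian.
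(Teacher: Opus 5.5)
Your proof is correct and follows the paper's architecture. Both use the cellular matrix-tree theorem and the sandwich $1\le{\det}^+(B_jB_j^T)/\tau_\ell(\Delta_j)\le\tau_{\ell-1}(\Delta_j)\le{\det}^+(CC^T)$ with $C=\partial_{\ell-1}$. Both then bound ${\det}^+(CC^T)\le(kn)^{\rang C}$, since every eigenvalue is at most $kn$. The differences lie in the two supporting estimates.

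For $\tau_{\ell-1}\le{\det}^+(CC^T)$, the paper sums Lyons' lattice inequality $|\widetilde H_{\ell-2}(S,\mathbb{Z})|^2\le\det(C^TC\restriction F_{\ell-1}(S))$ over $S$ via Cauchy--Binet. You instead apply the matrix-tree theorem to the $(\ell-1)$-skeleton and drop the factor $\tau_{\ell-2}/|\widetilde H_{\ell-3}(\Delta,\mathbb{Z})|^2\ge1$. Both work.

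The more substantive difference is the rank bound. The paper double counts: every $(\ell-2)$-cell lying in some $(\ell-1)$-cell lies in at least $d+1$ of them, by condition (4), so $\rang C\le kn/(d+1)$. You use $CB=0$ to get $\rang C\le n-r$, then the already-established estimate \eqref{ranklognormalization}, $n-r\le 2kn/d$. Your route is shorter and reuses an input the final equivalence needs anyway. The paper's bound is purely combinatorial and independent of the Nachmias--Peled eigenvalue count.

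Two small corrections. First, APC only makes $\widetilde H_{\ell-2}(\Delta,\mathbb{Z})$ finite, not trivial, so your parenthetical about torsion factors vanishing is inaccurate. Your lower-bound argument via the surjection nevertheless handles this correctly. Second, you should justify the bound on the number of facets of an $(\ell-1)$-cell as the paper does: each such facet pairs with a distinct other facet of an $\ell$-cell containing it, giving at most $k$. Your fallback $f_{\ell-2}\le Cf_{\ell-1}$ is unjustified.
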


\begin{proof}
If $\ell=1$, then $\Delta_j$ is a connected graph and the usual matrix-tree theorem gives
\[
    \frac{{\det}^+(B_jB_j^T)}{\tau_1(\Delta_j)}=f_0(\Delta_j)\,.
\]
Thus~\eqref{topologicalcomparison} holds. We assume for the rest of the proof that $\ell\geq2$ and drop the index $j$. Write $n=f_{\ell-1}(\Delta)$ and $r=\rang B$.

Let $C=\partial_{\ell-1}$ be the boundary matrix in dimension $\ell-1$, and set $h=\rang C$. The cellular matrix-tree theorem~\cite[Theorem~2.8]{duval2011cellular} gives
\begin{equation}\label{matrixtree}
    {\det}^+(BB^T)
    =\frac{\tau_\ell(\Delta)\tau_{\ell-1}(\Delta)}
    {|\widetilde H_{\ell-2}(\Delta,\mathbb{Z})|^2}\,.
\end{equation}
For every spanning $(\ell-1)$-hyperforest $S$ of the $(\ell-1)$-skeleton, the inclusion induces a surjection from $\widetilde H_{\ell-2}(S,\mathbb{Z})$ onto $\widetilde H_{\ell-2}(\Delta,\mathbb{Z})$. Both groups are finite, so
\[
    \tau_{\ell-1}(\Delta)
    \geq |\widetilde H_{\ell-2}(\Delta,\mathbb{Z})|^2\,.
\]
Moreover, the integer lattice identity in~\cite[Lemma~3.2]{lyons2009random} gives
\[
    | \widetilde H_{\ell-2}(S,\mathbb{Z})|^2
    \leq\det(C^TC\restriction F_{\ell-1}(S))\,.
\]
Summing over $S$ and applying the Cauchy--Binet formula, we obtain
\[
    \tau_{\ell-1}(\Delta)\leq{\det}^+(CC^T)\,.
\]
Combining these bounds with~\eqref{matrixtree},
\begin{equation}\label{taubounds}
    1\leq\frac{{\det}^+(BB^T)}{\tau_\ell(\Delta)}
    \leq\tau_{\ell-1}(\Delta)\leq{\det}^+(CC^T)\,.
\end{equation}

We next bound the rank and trace of $CC^T$. Every $(\ell-1)$-cell $F$ is a facet of an $\ell$-cell $P$. Each $(\ell-2)$-face of $F$ belongs to exactly two facets of $P$, and distinct such faces determine distinct facets of $P$ other than $F$. Since $P$ has $k+1$ facets, $F$ has at most $k$ faces of dimension $\ell-2$. Therefore,
\[
    \Tr(CC^T)\leq kn\,.
\]

Let $a$ be the number of $(\ell-2)$-cells that are contained in at least one $(\ell-1)$-cell. Fix such a cell $\sigma$ and a cell $F$ containing it. For each of the $d$ top-dimensional cells containing $F$, there is another facet containing $\sigma$. These other facets are distinct, since otherwise a pair of $(\ell-1)$-cells would belong to more than one $\ell$-cell. Thus $\sigma$ belongs to at least $d+1$ cells of dimension $\ell-1$. Double counting incidences gives
\[
    a(d+1)\leq kn\,.
\]
The rows of $C$ corresponding to the remaining $(\ell-2)$-cells are zero, so
\[
    h\leq a\leq\frac{kn}{d+1}\,.
\]
Each nonzero eigenvalue of $CC^T$ is at most its trace. Hence~\eqref{taubounds} implies
\[
    0\leq\frac1n\log\left(\frac{{\det}^+(BB^T)}{\tau_\ell(\Delta)}\right)
    \leq\frac{h\log(kn)}n
    \leq\frac{k\log(kn)}{d+1}\longrightarrow0\,,
\]
using $\log n/d\to0$. This proves~\eqref{topologicalcomparison}. Finally,
\[
    \frac{\log{\det}^+(d^{-1}BB^T)}r
    =\frac{n}{r}\frac{\log(d^{-n}\tau_\ell(\Delta))}n
    +\left(\frac{n}{r}-1\right)\log d
    +\frac1r\log\left(\frac{{\det}^+(BB^T)}{\tau_\ell(\Delta)}\right)\,.
\]
By~\eqref{ranklognormalization2}, $n/r\to1$ and $(n/r-1)\log d\to0$. The last term tends to $0$ by~\eqref{topologicalcomparison}, so~\eqref{nicetau} and~\eqref{niceB} are equivalent. The same identity proves the equivalence in the graph case.
\end{proof}

\begin{corollary}\label{cor:torsion-spectral}
    Under the assumptions of Lemma~\ref{matrixtreeequivalence}, suppose that the measures $\mu_j$ satisfy~\eqref{niceB2}. Then
    \[
        \frac{\log| \widetilde H_{\ell-1}(\mathcal{T}_j,\mathbb{Z})|}{f_{\ell-1}(\Delta_j)}
        \text{ converges to }c_k\text{ in probability.}
    \]
    In particular, this holds if the smallest nonzero eigenvalue of $d_j^{-1}B_jB_j^T$ is bounded away from $0$ uniformly in $j$.
\end{corollary}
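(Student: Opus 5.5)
The plan is to chain three earlier results: Corollary~\ref{thmBTB}, Lemma~\ref{matrixtreeequivalence} and Theorem~\ref{thmBTB2}. We first use \eqref{niceB2} to get \eqref{niceB}, then convert \eqref{niceB} into \eqref{nicetau}, and finally apply Theorem~\ref{thmBTB2} and identify the torsion group with the full reduced homology using APC.

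\textbf{From \eqref{niceB2} to \eqref{niceB}.} The boundary matrices $B_j=\partial_\ell$ fit the setup of Theorem~\ref{thm1}. Hence the second part of Corollary~\ref{thmBTB} applies directly: the hypothesis \eqref{niceB2} implies \eqref{niceB}. The particular case in the statement is immediate. If the smallest nonzero eigenvalue of $d_j^{-1}B_jB_j^T$ is at least some $c>0$ for all $j$, then $\mu_j((0,\delta))=0$ for every $\delta<c$. So the integral in \eqref{niceB2} vanishes for such $\delta$.

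\textbf{From \eqref{niceB} to \eqref{nicetau}.} We are under the assumptions of Lemma~\ref{matrixtreeequivalence}: APC, and $\log f_{\ell-1}(\Delta_j)/d_j\to0$. By that lemma, \eqref{niceB} and \eqref{nicetau} are equivalent, so \eqref{nicetau} holds.

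\textbf{Conclusion via Theorem~\ref{thmBTB2}.} Theorem~\ref{thmBTB2} now gives
\[
\frac{\log|\tors H_{\ell-1}(\mathcal{T}_j,\mathbb{Z})|}{f_{\ell-1}(\Delta_j)}\to c_k
\]
in probability. It remains to replace $\tors H_{\ell-1}$ by $\widetilde H_{\ell-1}$. As noted before Lemma~\ref{matrixtreeequivalence}, in the APC case a spanning $\ell$-hyperforest $S$ satisfies $\tors\widetilde H_{\ell-1}(S,\mathbb{Z})=\widetilde H_{\ell-1}(S,\mathbb{Z})$. This holds because $\widetilde H_{\ell-1}(S,\mathbb{Q})\cong\widetilde H_{\ell-1}(\Delta,\mathbb{Q})=0$, so the group is finite.

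We also need the torsion of reduced and unreduced homology to agree. For $\ell\ge2$ the two groups coincide in degree $\ell-1$. For $\ell=1$ they differ only by a free summand $\mathbb{Z}$, which has no torsion. Hence $|\widetilde H_{\ell-1}(\mathcal{T}_j,\mathbb{Z})|=|\tors H_{\ell-1}(\mathcal{T}_j,\mathbb{Z})|$ almost surely, since $\mathcal{T}_j$ is supported on spanning hyperforests. This gives the claim.

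\textbf{Main obstacle.} There is no real obstacle here; the one point needing care is this bookkeeping between reduced and unreduced homology and their torsion, in particular in the case $\ell=1$.
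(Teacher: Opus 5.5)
Your proposal is correct and follows the paper's proof exactly: \eqref{niceB2} gives \eqref{niceB} via Corollary~\ref{thmBTB}, Lemma~\ref{matrixtreeequivalence} turns this into \eqref{nicetau}, and Theorem~\ref{thmBTB2} finishes. Your extra bookkeeping is also correct, and the paper leaves it implicit: the eigenvalue-gap special case, and the identification $|\tors H_{\ell-1}(\mathcal{T}_j,\mathbb{Z})|=|\widetilde H_{\ell-1}(\mathcal{T}_j,\mathbb{Z})|$ under APC, including the case $\ell=1$.
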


\begin{proof}
Corollary~\ref{thmBTB} shows that~\eqref{niceB2} implies~\eqref{niceB}. Lemma~\ref{matrixtreeequivalence} then gives~\eqref{nicetau}, and Theorem~\ref{thmBTB2} proves the statement.
\end{proof}

\begin{remark}\label{remarkdetvsHom}
For every spanning $\ell$-hyperforest $S$ of $\Delta_j$, Lemma~\ref{lemmaBTBvsP} and the determinantal weight formula give
\[
    \frac{\det(B_j^TB_j\restriction F_\ell(S))}{t_{\ell-1}(S)^2}
    =\frac{{\det}^+(B_jB_j^T)}{\tau_\ell(\Delta_j)}\,.
\]
Under the assumptions of Lemma~\ref{matrixtreeequivalence}, the logarithm of this ratio is $o(f_{\ell-1}(\Delta_j))$, uniformly over $S$. Thus, after normalization by $f_{\ell-1}(\Delta_j)$, the logarithm of the Gram determinant differs by $o(1)$ from twice the logarithm of the homology torsion.
\end{remark}

\medskip

\textbf{Acknowledgments.} The first author was supported by the Marie Sk\l{}odowska-Curie Postdoctoral Fellowship "RaCoCoLe".
The second author was partially supported by the Israel Science Foundation grant ISF-3464/24.

\medskip

\textbf{LLM disclosure statement. } The proof presented in Section~\ref{section1dcoupling} was found by ChatGPT Pro and written by the authors. AI assistance was also used in revising and extending the polytopal arguments in Section~\ref{SecTop}, checking mathematical details, proofreading, and improving the language.

\bibliography{references}

\bibliographystyle{plain}

\end{document}